\newtheorem{theorem}{Theorem}[section]
\newtheorem{lemma}[theorem]{Lemma}
\newtheorem{corollary}[theorem]{Corollary}
\newtheorem{proposition}[theorem]{Proposition}
\theoremstyle{definition}
\newtheorem{example}[theorem]{Example}
\newcommand{\N}{\mathbb N}
\newcommand{\Z}{\mathbb Z}
\newcommand{\Q}{\mathbb Q}
\newcommand{\F}{\mathbb F}
\newcommand{\C}{\mathbb C}
 \DeclareMathOperator{\ord}{ord}
\DeclareMathOperator{\lcm}{lcm}
\DeclareMathOperator{\ch}{char}
\renewcommand{\t}{\, | \,}
\renewcommand{\time}{\negthinspace \times \negthinspace}
\newcommand{\be}{\begin{equation}}
\newcommand{\ee}{\end{equation}}
\newcommand{\bnml}{\begin{multline}}
\newcommand{\enml}{\end{multline}}
\newcommand{\buml}{\begin{multline*}}
\newcommand{\euml}{\end{multline*}}
\newcommand{\ber}{\begin{eqnarray}}
\newcommand{\eer}{\end{eqnarray}}
\numberwithin{equation}{section}
\begin{document}

\title{On clean, weakly clean, and feebly clean commutative group rings}

\author{Yuanlin Li }
\address{Department of Mathematics and Statistics,
Brock University, 1812 Sir Isaac Brock Way, St. Catharines, Ontario, Canada L2S 3A1
and
Faculty of Science, Jiangsu University, Zhenjiang, Jiangsu China
}
\email{yli@brocku.ca
}

\author{Qinghai Zhong}

\address{Institute for Mathematics and Scientific Computing,  University of Graz, NAWI Graz \\
 Heinrichstra{\ss}e 36, 8010 Graz, Austria, and 
School of Mathematics and statistics, Shandong University of Technology, Zibo, Shandong 255000, China.}
 \email{qinghai.zhong@uni-graz.at}

\thanks{This research was supported in part  by a  Discovery Grant from the Natural Sciences and Engineering Research Council of Canada (Grant No. RGPIN 2017-03903), by  the Austrian Science Fund FWF (Project Number P33499), and by National Natural Science Foundation of China (Grant No. 12001331).}

\subjclass[2010]{Primary: 16S34 Secondary: 11R11, 11R18}

\keywords{Clean  ring; Group ring; Ring of algebraic integers; Primitive root of unity; Cyclotomic field.}

\begin{abstract}
A ring $R$ is said to be clean if each element of $R$ can be written as the sum of a unit and an idempotent. $R$ is said to be weakly clean if each element of $R$ is either a sum or a difference of a unit and an idempotent,  and $R$ is said to be feebly clean if every element $r$ can be written as $r=u+e_1-e_2$, where $u$ is a unit and $e_1,e_2$ are orthogonal idempotents. Clearly  clean rings are weakly clean rings and both of them are feebly clean.  In a recent article (J. Algebra Appl. 17 (2018), 1850111(5 pages)), McGoven characterized when the group ring  $\Z_{(p)}[C_q]$ is weakly  clean and feebly clean, where $p, q$ are distinct primes. In this paper, we consider a more general setting. Let $K$ be an algebraic number field, $\mathcal O_K$  its ring of integers,  $\mathfrak p\subset \mathcal O$ a nonzero prime ideal, and $\mathcal O_{\mathfrak p}$  the  localization of $\mathcal O$ at  $\mathfrak p$.  We investigate when the group ring $\mathcal O_{\mathfrak p}[G]$ is  weakly clean and feebly clean, where $G$ is a finite abelian group, and establish   an explicit characterization for such a group ring to be weakly clean and feebly clean  for the case when $K=\Q(\zeta_n)$ is  a cyclotomic field or $K=\Q(\sqrt{d})$ is  a quadratic field.
\end{abstract}

\maketitle

\bigskip
\section{Introduction}

Let $R$ be an associative ring with identity $1\neq 0$. The units and idempotents are key elements to determine the structure of the ring.
Following Nicholson \cite{Ni77}, a ring is called clean if every element is a sum of a unit and an idempotent. This class of rings includes commutative zero-dimensional rings, unit-regular rings, local rings, and semi-perfect rings. Since Camillo and Yu's publication \cite{Ca-Yu94} in 1994,
clean rings have captured wide attention in the literature. For some account of clean rings, the reader may consult \cite{Mc06b}.

It is easily seen that $R$ is clean if and only if every element is a difference of a unit and an idempotent. Thus it is natural to study the rings with the property that every element is either a sum or a difference of a unit and an idempotent.
These rings were first considered by Anderson and Camillo \cite{An-Ca02} and were called weakly clean rings by Ahn and Anderson \cite{Ah-An06}. For properties of weakly clean rings and connections with clean rings, we refer to \cite{Ah-An06,Ch-Qu11, Da14, Ko-Sa-Zh17}. Since several important
properties of clean rings  do not generalize to weakly clean rings, Arora and Kundu \cite{Ar-Ku17} introduced the family of feebly clean rings which has the property that every element $r$ can be written as $r=u+e_1-e_2$, where $u$ is a unit and $e_1,e_2$ are orthogonal idempotents. Then both clean rings and weakly clean rings are feebly clean.

Let $G$ be a group and let $C_n$ denote a cyclic group of order $n$. We denote by $R[G]$ the group ring of $G$ over $R$. The question of when a group ring $R[G]$ is clean was first considered by Han and Nicholson \cite{Ha-Ni01}. Then the cleanness of group rings has found wide
interest in the literature (for a sample out of
many see \cite{Ca-Kh-La-Ni-Zh06, Ch-Zh07}). In general, the question seems to be difficult. It is even unknown when $R[C_2]$ is clean. A special case of \cite[Corollary 20]{Zh10} is that
  if $R$ is a commutative  ring and $G$ is an  elementary abelian $2$-group, then $R[G]$ is clean if and only if $R$ is clean.

  Let $K$ be an algebraic number field, $\mathcal O$ its ring of integers, $\mathfrak p\subset \mathcal O$ a nonzero prime ideal, and   $\mathcal O_{\mathfrak p}$ denote the  localization of $\mathcal O$ at  $\mathfrak p$.
In \cite{Im-Mc14a}, Immormino and McGovern gave a characterization of when $\Z_{(p)}[G]$ is clean. Recently, we \cite{Li-Zh20} characterized  when  the group ring $\mathcal O_{\mathfrak p}[G]$ is clean, provided that $K$ is  a cyclotomic field or  a quadratic field.
Most recently, in \cite{Mc18}, McGovern gave a characterization of when $\Z_{(p)}[C_q]$ is weakly  clean and feebly clean, where $p, q$ are distinct primes.
In this paper, we consider a more general setting and establish a characterization of when the group ring $\mathcal O_{\mathfrak p}[G]$ is weakly clean and feebly clean, provided that $K$ is  a cyclotomic field or  a quadratic field.

We proceed as follows. In Section 2, we fix our notation and gather the required tools. In Section 3,
we first provide a complete characterization of when $\Z_{(p)}[G]$ is clean,  weakly clean and feebly clean (Theorem \ref{thm1}), 
which extends the above mentioned result of McGovern.   We  then establish an explicit
characterization of  when  $\mathcal O_{\mathfrak p}[G]$ is weakly clean and feebly clean, where $K$ is  a cyclotomic field (Theorem \ref{main1}) or  a quadratic field (Theorem \ref{main2}).


\bigskip
\section{Preliminaries}
\medskip

For a finite abelian group $G$, we denote by $\exp(G)$ the exponent of $G$. We denote by $\N$ the set of all positive integers and $\N_0=\N\cup\{0\}$.
For  $n\in \N$,  we denote by $\varphi(n)$ the Euler function.
Let $n\in \N$ and let $n=p_1^{k_1}\ldots p_s^{k_s}$ be the prime factorization, where $s, k_1,\ldots, k_s\in \N$ and $p_1,\ldots, p_s$ are pair-wise distinct primes. It is well-known that
 \begin{align*}
 &\varphi(n)=\prod_{i=1}^s\varphi(p_i^{k_i})=\prod_{i=1}^sp_i^{k_i-1}(p_i-1)\\
\text{ and }\quad &(\Z/n\Z)^{\times}\cong (\Z/p_1^{k_1}\Z)^{\times}\times\ldots\time(\Z/p_s^{k_s}\Z)^{\times}\,.
\end{align*}
Furthermore,
\begin{align*}
&(\Z/p_i^k\Z)^{\times}\cong C_{p_i^{k-1}(p_i-1)}& \quad &\text{ where } p_i\ge 3,\\
&(\Z/2^{\ell}\Z)^{\times}=\langle-1\rangle\times\langle5\rangle\cong C_2\oplus\C_{2^{\ell-2}} &\quad &\text{ where }\ell\ge 3,\\
\text{ and }\quad &(\Z/4\Z)^{\times}\cong C_2\,.&&
\end{align*}

For every $m\in \N$ with $\gcd(m,n)=1$, we denote by $\ord_nm=\ord_{(\Z/n\Z)^{\times}}m$ the multiplicative order of $m$ modulo $n$.
If $\ord_nm=\varphi(n)$, we say $m$ is a primitive root of $n$ and $n$ has a primitive root if and only if $n=2$, $4$, $q^{\ell}$, or $2q^{\ell}$, where $q$ is an odd prime and $\ell\in \N$. Let $n_1\in \N$ be another integer with $\gcd(n_1,m)=1$. Then
\begin{align*}
&\ord_nm\le \ord_{nn_1}m\le n_1\ord_nm\,,\\
\text{ and }\quad &\lcm(\ord_nm,\ord_{n_1}m)=\ord_{\lcm(n,n_1)}m\,.
\end{align*}

Let $\zeta_n$ be the $n$th primitive root of unity over $\Q$. Then $[\Q(\zeta_n):\Q]=\varphi(n)$. Let $m$ be another positive integer. Then
\begin{align*}
&\Q(\zeta_n)\cap \Q(\zeta_m)=\Q(\zeta_{\gcd(n,m)})\\
\text{ and }\quad  &\Q(\zeta_n)(\zeta_m)=\Q(\zeta_{\lcm(n,m)})\,.
\end{align*}

Let $R$ be a ring. We denote by $U(R)$ the group of units of $R$.

\begin{lemma}\label{lemm1}
Let $R$ be a commutative indecomposable ring. Then
\begin{enumerate}[label={(\arabic*)}, font={\bfseries}]
\item $R$ is clean if and only if $R$ is local.

\item $R$ is weakly clean but not clean if and only if  $R$  has exactly two maximal ideals and $2\in U(R)$.

\item $R$ is feebly clean  if and only if $R$ is weakly clean.

\item If $2\not\in U(R)$, then $R$ is weakly clean if and only if $R$ is clean.
\end{enumerate}

\end{lemma}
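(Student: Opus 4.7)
The plan is to exploit the fact that a commutative indecomposable ring $R$ has only the idempotents $0$ and $1$, so the three kinds of decomposition take very restricted shapes: a clean decomposition means $r \in \{u, u+1\}$ for some unit $u$, a weakly clean decomposition means $r \in \{u, u-1, u+1\}$, and in $r = u + e_1 - e_2$ the orthogonality $e_1 e_2 = 0$ forces $(e_1, e_2) \in \{(0,0),(1,0),(0,1)\}$, which reproduces exactly the weakly clean shapes. This observation immediately yields $(3)$.

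For $(1)$, local rings are always clean by the standard argument, so the task is the converse. Assuming $R$ is clean and indecomposable, every $r \in R$ satisfies $r \in U(R)$ or $1-r \in U(R)$. I would then show that $R \setminus U(R)$ is an ideal, proving $R$ is local: closure under multiplication by ring elements is immediate, while closure under addition reduces to supposing $a, b$ are non-units with $a+b \in U(R)$, setting $c = (a+b)^{-1}a$, and observing that $c$ a unit forces $a = (a+b)c$ a unit, while $1-c = (a+b)^{-1}b$ a unit forces $b$ a unit, a contradiction either way.

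The heart of $(2)$ and $(4)$ is a single Chinese Remainder Theorem count. If $R$ has three or more maximal ideals $M_1, M_2, M_3$, CRT produces $r$ with $r \equiv 0, 1, -1$ modulo $M_1, M_2, M_3$ respectively, so $r, r-1, r+1$ are all non-units and $R$ fails to be weakly clean. If $R$ has at least two maximal ideals but $2 \notin U(R)$, then $2$ lies in some maximal ideal $M_1$; choosing another maximal ideal $M_2$ and applying CRT to find $r$ with $r \equiv 1 \pmod{M_1}$ and $r \equiv 0 \pmod{M_2}$ gives $r \in M_2$, $r-1 \in M_1$, and $r+1 = (r-1) + 2 \in M_1$, so again $r$ is not weakly clean. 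Taking contrapositives yields $(4)$ and the forward direction of $(2)$.

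The converse direction of $(2)$ is a pigeonhole. Given $R$ indecomposable with exactly two maximal ideals $M_1, M_2$ and $2 \in U(R)$, $R$ is not local and so not clean by $(1)$. For any $r \in R$, $r-e$ is a unit iff $\overline{r}_i \neq e$ in each residue field $R/M_i$; since $2 \notin M_i$, the elements $-1, 0, 1$ have pairwise distinct images in $R/M_i$, so at most one $e \in \{-1, 0, 1\}$ is forbidden by $M_i$. Thus at most two of the three candidates are forbidden in total, leaving some $e$ with $r-e \in U(R)$, giving the required weakly clean decomposition. No step presents a serious obstacle; the only care needed is keeping the pigeonhole organised around the hypothesis $2 \in U(R)$ that makes $\{-1, 0, 1\}$ inject into the residue fields.
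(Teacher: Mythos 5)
Your proof is correct in all four parts. Note, however, that the paper does not actually prove this lemma: it simply cites Corollary~1.4 of Ahn--Anderson \cite{Ah-An06}, so your argument is a self-contained reconstruction of the cited result rather than a variant of an argument in the paper. The key observations you use are exactly the right ones: indecomposability collapses the idempotents to $\{0,1\}$, which makes a feebly clean decomposition $r=u+e_1-e_2$ with $e_1e_2=0$ coincide with a weakly clean one (giving (3) for free); the ``non-units form an ideal'' argument via $c=(a+b)^{-1}a$ and the dichotomy ``$c$ or $c-1$ is a unit'' correctly establishes the converse of (1); the Chinese Remainder Theorem applied to three maximal ideals (targets $0,1,-1$), and to two maximal ideals when $2$ is a non-unit (using $r+1=(r-1)+2$), gives the obstructions needed for (4) and the forward direction of (2); and the pigeonhole over the two residue fields, where $2\in U(R)$ guarantees that $-1,0,1$ have distinct images, gives the converse of (2). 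The one virtue of your write-up over the paper's treatment is that it makes the role of the hypothesis $2\in U(R)$ completely transparent; the cost is length, which is presumably why the authors chose to cite rather than prove.
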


\begin{proof}
This follows from \cite[Corollary 1.4]{Ah-An06}.
\end{proof}

\begin{lemma}\label{lemm2}
 Let $\{R_{\alpha}\}$ be a family of commutative rings and let $R=\prod R_{\alpha}$. Then
 \begin{enumerate}[label={(\arabic*)}, font={\bfseries}]
 \item $R$ is clean if and only if each $R_{\alpha}$ is clean.

 \item $R$ is weakly  clean if and only if each $R_{\alpha}$ is weakly clean and at most one $R_{\alpha}$ is not clean.

 \item $R$ is feebly clean if and only if each $R_{\alpha}$ is feebly clean.
 \end{enumerate}
\end{lemma}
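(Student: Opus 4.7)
The plan is to handle each of the three items by arguing that units, idempotents, and pairs of orthogonal idempotents in a direct product $R=\prod R_\alpha$ are precisely tuples whose components have the corresponding property in each factor, and then to treat the subtle case in (2) separately.

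\medskip
\noindent\textbf{Item (1).} For each $r=(r_\alpha)\in R$, observe that $r=u+e$ with $u\in U(R)$ and $e\in R$ idempotent if and only if, componentwise, $r_\alpha=u_\alpha+e_\alpha$ with $u_\alpha\in U(R_\alpha)$ and $e_\alpha\in R_\alpha$ idempotent. Both directions are immediate. The same principle, applied to the pair consisting of a unit and \emph{two orthogonal} idempotents (orthogonality being a componentwise condition as well), yields item (3).

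\medskip
\noindent\textbf{Item (2), converse.} The key preliminary observation is the easy identity: an element $r$ of any ring is a difference of a unit and an idempotent if and only if $-r$ is a sum of a unit and an idempotent (if $-r=v+f$, then $r=(-v)-f$, and $-v\in U$, $f$ idempotent). Hence in a clean ring every element is both a sum and a difference of a unit and an idempotent. Now suppose each $R_\alpha$ is weakly clean and that at most one, say $R_{\alpha_0}$, fails to be clean. Given $r=(r_\alpha)\in R$, the weakly clean component $r_{\alpha_0}$ is either a sum $u_{\alpha_0}+e_{\alpha_0}$ or a difference $u_{\alpha_0}-e_{\alpha_0}$. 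In the first case, every other $r_\alpha$ (lying in a clean ring) is also a sum of a unit and an idempotent, so $r$ is clean; in the second case, every other $r_\alpha$ is likewise a difference of a unit and an idempotent, so $r$ is such a difference. Either way $r$ is weakly clean.

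\medskip
\noindent\textbf{Item (2), forward direction.} The easy half is that weakly cleanness descends to direct factors (apply the canonical projection). The nontrivial part is showing that at most one factor can fail to be clean. Suppose to the contrary that two distinct indices $\alpha,\beta$ give non-clean $R_\alpha,R_\beta$. Choose $a\in R_\alpha$ that is \emph{not} a sum of a unit and an idempotent; by weak cleanness it is a difference of such. Similarly choose $b\in R_\beta$ not of that form. Now define $r\in R$ by $r_\alpha=a$, $r_\beta=-b$, and $r_\gamma$ arbitrary in the remaining coordinates. If $r$ were a sum $u+e$, its $\alpha$-component would realize $a$ as such a sum, a contradiction; and if $r$ were a difference $u-e$, then $-r$ would be a sum, whose $\beta$-component would write $b=-(r_\beta)$ as a sum of a unit and an idempotent, again a contradiction. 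Hence $r$ is not weakly clean, contradicting the weak cleanness of $R$. This forces at most one non-clean factor, completing the proof.

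\medskip
\noindent The main (only) genuine obstacle is the two-factor construction in the forward direction of (2); the sign-flip trick $-r=v+f\Leftrightarrow r=(-v)-f$ is what makes that argument go through, and it is also what lets the converse of (2) work uniformly in the ``clean'' coordinates. Parts (1) and (3) are routine componentwise verifications requiring no such trick.
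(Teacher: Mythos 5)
Your proof is correct, but it takes a genuinely different route from the paper: the paper disposes of this lemma in one line by citing \cite[Theorem 1.7]{Ah-An06} and \cite[Theorem 2.1.2]{Ar-Ku17}, whereas you give a complete, self-contained elementary argument. The componentwise verifications in (1) and (3) are exactly as routine as you say (orthogonality of idempotents is indeed a coordinatewise condition), and the substance of your contribution is in (2): the sign-flip equivalence ($r$ is a difference of a unit and an idempotent if and only if $-r$ is a sum of one) correctly shows that in a clean factor every element is simultaneously a sum and a difference, which is what lets the ``good'' coordinates follow whichever form the single non-clean coordinate requires; and the two-factor obstruction $r_\alpha=a$, $r_\beta=-b$ with $a$ not a sum in $R_\alpha$ and $b$ not a sum in $R_\beta$ cleanly rules out two non-clean factors. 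All of this works uniformly for infinite index sets since every condition is checked coordinatewise. What your approach buys is self-containedness and transparency about where the asymmetry in (2) comes from; what the paper's approach buys is brevity and deference to the established literature, where these product statements are standard. One cosmetic remark: in the forward direction of (2) you note that $a$ is a difference by weak cleanness, but that fact is never used in the contradiction, so it could be dropped.
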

\begin{proof}
This follows from \cite[Theorem 1.7]{Ah-An06} and \cite[Theorem 2.1.2]{Ar-Ku17}.
\end{proof}


\medskip

Let $R$ be a commutative ring with identity and  let $G$ be a finite abelian group of exponent $\exp(G)$.  Suppose $R$ is indecomposable and $\exp(G)\in U(R)$. For every positive divisor $d$  of $\exp(G)$, we denote $\lambda(d)=\mu(d)\nu(d)$, where $\mu(d)$ is the number of the cyclic subgroups of $G$ of order $d$ and $\nu(d)$ is the number of the monic irreducible divisors of the cyclotomic polynomial $\Phi_d(x)$ over $R$.  The number $\lambda(d)$ is called the $d$-number of the group ring $R[G]$. Let $\phi_d(x)$  be a monic  irreducible divisor   of $\Phi_d(x)$. Then there exists a root $\epsilon_d$ of $\phi_d (x)$ in some extension of $R$ such that $\phi_d(x)$ is a minimal polynomial of $\epsilon_d$  over $R$. Therefore $R[\epsilon_d]\cong R[x]/(\phi_d(x))$.


\begin{theorem}[{\cite[Theorem 4]{Mo-Na06}}]\label{t1}
	Let $G$ be a finite abelian group with exponent $n$ and let $R$ be a commutative ring  which is a direct product of $m$ indecomposable rings $R_i$ (e.g., $R$ is  noetherian).
	\begin{enumerate}[label={(\arabic*)}, font={\bfseries}]
		\item 	\[
		R[G]\cong \sum_{i=1}^mR_i[G]\,.
		\]
		
		\item If $n\in U(R)$, then for every $i\in \{1,2,\ldots,m\}$, we have
		$$R_i[G]\cong \sum_{d\mid n}\lambda_i(d)(R_i[x]/(\phi_{i,d}))\,,$$
		where $\lambda_i(d)$ is the $d$-number of the group ring $R_i[G]$ and $\phi_{i,d}$ is  a monic  irreducible divisor   of $\Phi_d(x)$ over $R_i$.
	\end{enumerate}

\end{theorem}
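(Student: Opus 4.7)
For part (1), the ring homomorphism
\[
\Bigl(\prod_{i=1}^m R_i\Bigr)[G] \longrightarrow \prod_{i=1}^m R_i[G], \qquad \sum_{g \in G} (r_{i,g})_{i=1}^m\, g \longmapsto \Bigl(\sum_{g \in G} r_{i,g}\, g\Bigr)_{i=1}^m,
\]
is routinely checked to be a bijective ring homomorphism, which dispatches (1).

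For part (2), fix an indecomposable $R_i$ and write $n = \exp(G) \in U(R_i)$. I would begin with the cyclic case $G = C_m$ with $m \mid n$. Here $R_i[C_m] \cong R_i[x]/(x^m - 1)$. Because $m \in U(R_i)$, the derivative $m x^{m-1}$ generates the same ideal as $x^{m-1}$, which is coprime to $x^m - 1$; hence $x^m - 1$ is separable over $R_i$. Combined with the factorization $x^m - 1 = \prod_{d \mid m} \Phi_d(x)$ valid already in $\Z[x] \subseteq R_i[x]$, separability forces the polynomials $\{\Phi_d(x) : d \mid m\}$ to be pairwise coprime in $R_i[x]$. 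The Chinese Remainder Theorem therefore gives
\[
R_i[C_m] \cong \prod_{d \mid m} R_i[x]/(\Phi_d(x)).
\]
Each $\Phi_d(x)$ then factors into $\nu_i(d)$ distinct monic irreducible polynomials $\phi_{i,d,1},\dots,\phi_{i,d,\nu_i(d)}$ over $R_i$ (distinctness again from separability, since $\Phi_d \mid x^m - 1$), and a second application of CRT yields
\[
R_i[x]/(\Phi_d(x)) \cong \prod_{j=1}^{\nu_i(d)} R_i[x]/(\phi_{i,d,j}).
\]
Since $\mu(d) = 1$ for every $d \mid m$ in $C_m$, this proves (2) for cyclic groups.

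For a general finite abelian $G$, decompose $G \cong C_{n_1} \times \cdots \times C_{n_r}$ and proceed by induction on $r$ via the canonical isomorphism $R_i[G_1 \times G_2] \cong R_i[G_1] \otimes_{R_i} R_i[G_2]$. The cyclic case is the base, and at the inductive step I would use that, for $d,d' \mid n$ invertible in $R_i$, separability of $\Phi_n$ gives a decomposition
\[
R_i[x]/(\phi_{i,d,j}) \otimes_{R_i} R_i[x]/(\phi_{i,d',j'}) \cong \prod_{k} R_i[x]/(\phi_{i,\lcm(d,d'),k})
\]
(the product running over the irreducible factors of $\Phi_{\lcm(d,d')}$ that receive the composite character). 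Reorganising the resulting product over all multi-indices by the common order $d = \lcm(d_1,\dots,d_r)$ of the tuple $(d_1,\dots,d_r)$ should leave exactly $\mu(d)\nu_i(d) = \lambda_i(d)$ factors of the form $R_i[x]/(\phi_{i,d,j})$ for each $d \mid n$.

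I expect the combinatorial counting in this last reorganisation to be the main obstacle: one must identify the number of tuples $(d_1,\dots,d_r)$ with $d_k \mid n_k$ and $\lcm(d_1,\dots,d_r) = d$, weighted by the tensor-splitting multiplicities, with the number $\mu(d)$ of cyclic subgroups of order $d$ in $G$. The key structural input that makes the whole scheme work is that $R_i[\zeta_n]$ is an \'etale $R_i$-algebra whose factorisation is controlled by that of $\Phi_n$ over $R_i$; this is precisely what the hypothesis $n \in U(R_i)$ buys us.
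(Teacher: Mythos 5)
First, a point of reference: the paper does not prove this statement at all --- it is quoted from Mollov and Nachev \cite{Mo-Na06} --- so there is no internal proof to compare against. Judged on its own terms, your argument for (1) is complete, and your treatment of the cyclic case of (2) is the standard one and essentially correct: invertibility of $m$ makes $x^m-1$ separable over $R_i$ (indeed $x\cdot mx^{m-1}-m(x^m-1)=m$ is a unit), the factors $\Phi_d$ are then pairwise comaximal, and CRT applies. Two facts are nevertheless taken for granted there: that $\Phi_d$ admits a factorization into monic irreducible, pairwise comaximal factors over an arbitrary indecomposable commutative ring $R_i$ (needed both for $\nu_i(d)$ to be well defined and for your second application of CRT), and that the resulting quotients $R_i[x]/(\phi_{i,d,j})$, $1\le j\le \nu_i(d)$, are pairwise isomorphic of common degree $\varphi(d)/\nu_i(d)$ --- without which the conclusion cannot be written as $\nu_i(d)$ copies of a single ring $R_i[x]/(\phi_{i,d})$. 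Both facts are part of the content of the cited theorem and are not free.

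The genuine gap is in the non-cyclic case. You reduce to tensoring the cyclic decompositions and then explicitly defer ``the combinatorial counting in this last reorganisation''; but that count is exactly where $\mu(d)$, the number of cyclic subgroups of order $d$, enters the statement, so as written the proof is incomplete at its central point. The count does go through, but only once the equal-degree fact above is available: each connected factor of $R_i[x]/(\phi_{i,d_1,j_1})\otimes_{R_i}R_i[x]/(\phi_{i,d_2,j_2})$ is generated by a primitive $\lcm(d_1,d_2)$-th root of unity, so comparing $R_i$-ranks shows this tensor product splits into $\frac{\varphi(d_1)\varphi(d_2)\nu_i(d)}{\nu_i(d_1)\nu_i(d_2)\varphi(d)}$ copies of $R_i[x]/(\phi_{i,d})$ with $d=\lcm(d_1,d_2)$; summing over $j_1,j_2$ and over pairs $(d_1,d_2)$ with $\lcm(d_1,d_2)=d$, the desired multiplicity $\mu(d)\nu_i(d)$ reduces to the identity $\sum_{\lcm(d_1,d_2)=d}\mu_1(d_1)\varphi(d_1)\,\mu_2(d_2)\varphi(d_2)=\mu(d)\varphi(d)$, which is just the observation that the elements of order $d$ in $G_1\times G_2$ are the pairs $(g_1,g_2)$ with $\lcm(\ord g_1,\ord g_2)=d$. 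You should either carry out this computation (together with the equal-degree and isomorphism claims) or simply cite \cite{Mo-Na06}, as the paper does.
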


\begin{proposition}
	Let $G$ be a finite abelian group with exponent $n$ and let $R$ be a commutative ring with $n\in U(R)$ which is a direct product of $m$ indecomposable rings $R_i$  (e.g., $R$ is  noetherian).
	\begin{enumerate}[label={(\arabic*)}, font={\bfseries}]
		\item $R[G]$ is clean if and only if $R[C_n]$ is clean.
		
		\item $R[G]$ is feebly clean if and only if $R[C_n]$ is feebly clean.
		
	\end{enumerate}	
\end{proposition}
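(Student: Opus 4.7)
The plan is to apply Theorem \ref{t1} to express both $R[G]$ and $R[C_n]$ as direct products of rings of the form $R_i[x]/(\phi_{i,d})$, and then use Lemma \ref{lemm2} to translate cleanness (resp.\ feeble cleanness) of the whole ring into the same condition on each factor.

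By Theorem \ref{t1}, one has
\[
R[G]\cong \sum_{i=1}^m\sum_{d\mid n}\lambda_i(d)\bigl(R_i[x]/(\phi_{i,d})\bigr)\und R[C_n]\cong \sum_{i=1}^m\sum_{d\mid n}\lambda'_i(d)\bigl(R_i[x]/(\phi_{i,d})\bigr),
\]
where $\lambda_i(d)=\mu(d)\nu_i(d)$ for $G$ and $\lambda'_i(d)=\mu'(d)\nu_i(d)=\nu_i(d)$ for $C_n$, the latter because $C_n$ has exactly one subgroup of each order $d\mid n$. The key observation is that any finite abelian group of exponent $n$ contains an element of order $n$, hence a cyclic subgroup of every order $d\mid n$; so $\mu(d)\ge 1$. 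Consequently the set of isomorphism classes of ring factors appearing in the decomposition of $R[G]$ coincides exactly with that for $R[C_n]$ (namely those $R_i[x]/(\phi_{i,d})$ with $\nu_i(d)\ge 1$); only the multiplicities differ, by the factor $\mu(d)$.

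To finish, I invoke Lemma \ref{lemm2}(1) for (1) and Lemma \ref{lemm2}(3) for (2). Both state that a direct product of commutative rings is clean (resp.\ feebly clean) if and only if each factor is so, hence the factor-wise conditions for $R[G]$ and $R[C_n]$ are literally identical and the equivalences follow at once. There is no real obstacle here: the substance of the proposition is that cleanness and feeble cleanness are insensitive to multiplicities in the decomposition, whereas weak cleanness is not (Lemma \ref{lemm2}(2) permits at most one non-clean factor), which is precisely why the weakly clean case is excluded from the statement.
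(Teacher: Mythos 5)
Your proposal is correct and follows essentially the same route as the paper: decompose both $R[G]$ and $R[C_n]$ via Theorem \ref{t1} into factors $R_i[x]/(\phi_{i,d})$ and observe that the two products involve the same factors with possibly different multiplicities, so Lemma \ref{lemm2}(1) and (3) give the equivalences. Your explicit remark that $\mu(d)\ge 1$ for every $d\mid n$ (so that the two decompositions really do have the same set of factors) is a detail the paper leaves implicit, but it is the same argument.
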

\begin{proof}

By Theorem \ref{t1}, we obtain 
	\[
		R[G]\cong \sum_{i=1}^mR_i[G]\, \cong \sum_{i=1}^m\sum_{d\mid n}\lambda_i(d)(R_i[x]/(\phi_{i,d}))\,
		\]  and 
\[
		R[C_n]\cong \sum_{i=1}^mR_i[C_n]\, \cong \sum_{i=1}^m\sum_{d\mid n} \nu_i(d)(R_i[x]/(\phi_{i,d}))\,
		\] where, $\lambda_i(d)=\mu(d)\nu_i(d)$ is the $d$-number of the group ring $R_i[G]$,  and $\mu(d)$ is the number of the cyclic subgroups of $G$ of order $d$, and $\nu_i(d)$ is the number of the monic irreducible divisors of the cyclotomic polynomial $\Phi_d(x)$ over $R_i$.
Now Statements (1) and (2) follow immediately from  Lemma \ref{lemm2}.	

\end{proof}

Suppose $(R, \mathfrak m)$ is  a commutative local ring and we denote $\overline{R}=R/\mathfrak m$. Then $\overline{R}$ is a field and we denote by $\ch \overline{R}$ the characteristic of $\overline{R}$. For any polynomial $f(x)=a_nx^n+\ldots+a_0\in R[x]$, we denote $\overline{f(x)}=\overline{a_n}x^n+\ldots+\overline{a_0}\in \overline{R}[x]$, where $\overline{a_i}=a_i+\mathfrak m$ for all $i\in \{0,\ldots,n\}$.

Let $\F_q$ be a finite field, where $q$ is a power of some prime $p$ and let $\overline{\zeta_n}$ be the $n$th primitive root of unity over $\F_p$ with $\gcd(n,q)=1$. Then $[\F_q(\overline{\zeta_n}):\F_q]=\ord_nq$.
Let $F$ be an arbitrary field and let $f(x)$ be a polynomial of $F[x]$. If $\alpha$ is a root of $f(x)$, then $[F(\alpha):F]=\deg(f(x))$ if and only if $f(x)$ is irreducible in $F[x]$.

\begin{proposition}\label{2.5}
	Let $(R, \mathfrak m)$ be a local domain with $\overline{R}$  a finite field, let $G$ be a finite abelian group with $n=\exp(G)\in U(R)$, and let $\phi_d(x)$ be a monic irreducible divisor of $\Phi_d(x)$ over $R$ for all divisors $d$ of $n$.
		\begin{enumerate}[label={(\arabic*)}, font={\bfseries}]
		\item Let $d$ be a divisor of $n$. Then $R[\zeta_d]$ is clean if and only if $\deg(\phi_d(x))=\ord_d(|\overline{R}|)$. Furthermore, $R[G]$ is clean if and only if  $\deg(\phi_d(x))=\ord_d(|\overline{R}|)$ for all divisors $d$ of $n$.
		
		\item Suppose $2\in U(R)$. Let $d$ be a divisor of $n$. Then $R[\zeta_d]$ is feebly clean but not clean if and only if $\deg(\phi_d(x))=2\ord_d(|\overline{R}|)$. Furthermore, $R[G]$ is feebly clean  if and only if  $\deg(\phi_d(x))\le 2\ord_d(|\overline{R}|)$ for all divisors $d$ of $n$.

		\item Suppose $2\not\in U(R)$. Let $d$ be a divisor of $n$. Then $R[\zeta_d]$ is feebly clean  if and only if $R[\zeta_d]$ is weakly clean if and only if $R[\zeta_d]$ is  clean. Furthermore, $R[G]$ is feebly clean  if and only if  $R[G]$ is weakly clean if and only if $R[G]$ is  clean.
		\end{enumerate}

\end{proposition}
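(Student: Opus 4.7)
The plan is to reduce everything to an analysis of a single factor via the ring decomposition of Theorem~\ref{t1}, and then to recombine via Lemma~\ref{lemm2}. First I would apply Theorem~\ref{t1} to the indecomposable local ring $R$ to obtain
\[
R[G] \;\cong\; \bigoplus_{d \mid n} \lambda(d)\bigl(R[x]/(\phi_d(x))\bigr),
\]
where $\lambda(d) = \mu(d)\nu(d) \ge 1$ for every $d \mid n$ (a finite abelian group of exponent $n$ realizes every divisor of $n$ as the order of a cyclic subgroup, so $\mu(d) \ge 1$, and $\nu(d) \ge 1$ trivially). By Lemma~\ref{lemm2}, cleanness, weak cleanness and feeble cleanness of $R[G]$ then reduce to the corresponding property for each factor $R[\zeta_d] = R[x]/(\phi_d(x))$, $d \mid n$.

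Next I would describe the maximal ideals of $R[\zeta_d]$. Since $\phi_d$ is monic, $R[\zeta_d]$ is a free $R$-module of finite rank and hence integral over the local ring $R$, so every maximal ideal of $R[\zeta_d]$ lies over $\mathfrak{m}$ and corresponds bijectively to a maximal ideal of
\[
R[\zeta_d]/\mathfrak{m}R[\zeta_d] \;\cong\; \overline{R}[x]/(\overline{\phi_d(x)}).
\]
Because $d \in U(R)$, the polynomial $\Phi_d(x)$ is separable over $\overline{R}$, and over the finite field $\overline{R}$ every irreducible factor of $\Phi_d(x)$ has the common degree $\ord_d(|\overline{R}|)$. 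The divisor $\overline{\phi_d(x)}$ therefore splits into exactly $k := \deg(\phi_d)/\ord_d(|\overline{R}|)$ pairwise distinct irreducible factors, so $R[\zeta_d]$ has exactly $k$ maximal ideals. Furthermore, $\phi_d$ being monic and irreducible over the domain $R$, the ring $R[\zeta_d]$ embeds into a field extension of the fraction field of $R$ and is itself a domain, in particular indecomposable, so Lemma~\ref{lemm1} applies to it directly.

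The three parts then follow by case analysis on $k$. For (1), Lemma~\ref{lemm1}(1) gives $R[\zeta_d]$ clean iff local iff $k=1$ iff $\deg(\phi_d) = \ord_d(|\overline{R}|)$, and Lemma~\ref{lemm2}(1) lifts the equivalence to $R[G]$. For (2) with $2 \in U(R)$, Lemma~\ref{lemm1}(1)--(3) give $R[\zeta_d]$ feebly clean iff $k \in \{1,2\}$, i.e.\ $\deg(\phi_d) \le 2\ord_d(|\overline{R}|)$, with equality matching the not-clean case; Lemma~\ref{lemm2}(3) lifts this to $R[G]$. For (3) with $2 \notin U(R)$, the inclusion $2 \in \mathfrak{m}$ forces $2 \notin U(R[\zeta_d])$ (each maximal ideal of the integral extension contains $\mathfrak{m}$); Lemma~\ref{lemm1}(3)--(4) then collapse the three notions on every $R[\zeta_d]$, and Lemma~\ref{lemm2} --- in particular clause (2), which for weak cleanness of $R[G]$ requires at most one non-clean factor and hence, under the present equivalence, that \emph{all} factors be clean --- collapses them on $R[G]$.

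The main technical step I anticipate is the second paragraph: pinning down that $R[\zeta_d]$ has exactly $\deg(\phi_d)/\ord_d(|\overline{R}|)$ maximal ideals, which rests on the separability of $\Phi_d(x)$ modulo $\mathfrak{m}$ together with the classical factorization type of cyclotomic polynomials over a finite field, and simultaneously verifying the indecomposability needed to invoke Lemma~\ref{lemm1}.
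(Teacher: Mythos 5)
Your proposal is correct and follows essentially the same route as the paper: decompose $R[G]$ via Theorem~\ref{t1}, count the maximal ideals of each factor $R[x]/(\phi_d(x))$ by reducing modulo $\mathfrak m$ to get $\deg(\phi_d)/\ord_d(|\overline{R}|)$ of them, and conclude with Lemmas~\ref{lemm1} and~\ref{lemm2}. The only (harmless) divergence is in justifying that every maximal ideal of $R[\zeta_d]$ contains $\mathfrak m$ --- you use integrality of the monic extension and lying over, whereas the paper argues through a Jacobson-radical containment; your version of that step is arguably the cleaner one.
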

\begin{proof}
		
		Since $R$ is a local domain, we obtain $R$ and $R[\zeta_d]$ are  indecomposable for all divisors $d$ of $n$. Let $d$ be a divisor of $n$.
		
		Note that $$\mathfrak m=J(R)\subset J(R[x])\subset \bigcap_{I \text{ is a maximal ideal with }\phi_d(x)\in I }I\,.$$
		Then $(\mathfrak m, \phi_d(x))$ is contained in every maximal ideal of $R[x]$ containing $\phi_d(x)$.
		Since
		\begin{align*}
		&\big(R[x]/(\phi_d(x))\big)/\big((\mathfrak m, \phi_d(x))/(\phi_d(x))\big)\cong R[x]/(\mathfrak m, \phi_d(x))\\
		\cong &\big(R[x]/\mathfrak mR[x]\big)/\big((\mathfrak m, \phi_d(x))/\mathfrak mR[x]\big)\cong R[x]/\mathfrak m[x]/(\overline{(\phi_d(x))})\cong\overline{R}[x]/(\overline{(\phi_d(x))})\,,
		\end{align*}
	 the number of maximal ideals of $R[x]/(\phi_d(x))$ equals the number of maximal ideals of $\overline{R}[x]/(\overline{\phi_d(x)})$.
		Since $d\in U(R)$ (as $n\in U(R)$) and $\overline{R}[x]$ is a PID,  the number of maximal ideals of $\overline{R}[x]/(\overline{\phi_d(x)})$ equals the number of  irreducible divisors of $\overline{\phi_d(x)}$ over $\overline{R}$, whence equals $$\frac{\deg(\phi_d(x))}{[\overline{R}(\overline{\zeta_d}):\overline{R}]}=\frac{\deg(\phi_d(x))}{\ord_{d}(|\overline{R}|)}\,.$$

		\smallskip
		\textbf{(1)} By Lemma \ref{lemm1}.1, we have $R[\zeta_d]$ is clean if and only if $R[\zeta_d]\cong R[x]/(\phi_d(x))$ is local if and only if  $\frac{\deg(\phi_d(x))}{\ord_{d}(|\overline{R}|)}=1$, i.e., $\deg(\phi_d(x))=\ord_d(|\overline{R}|)$.
		
		Furthermore, by Theorem \ref{t1}.2 and Lemma \ref{lemm2}, we have $R[G]$ is clean if and only if $R[\zeta_d]$ is clean for all divisors $d$ of $n$ if and only if  $\deg(\phi_d(x))=\ord_d(|\overline{R}|)$ for all divisors $d$ of $n$.

		\smallskip
		\textbf{(2)} Suppose $2\in U(R)$. By Lemma \ref{lemm1}, we have $R[\zeta_d]$ is feebly clean but not clean  if and only if $R[\zeta_d]\cong R[x]/(\phi_d(x))$ has exactly two maximal ideals if and only if   $\frac{\deg(\phi_d(x))}{\ord_{d}(|\overline{R}|)}=2$, i.e., $\deg(\phi_d(x))=2\ord_d(|\overline{R}|)$.
		
		Furthermore, by Theorem \ref{t1}.2 and Lemma \ref{lemm2}, we have $R[G]$ is feebly clean  if and only if $R[\zeta_d]$ is feebly clean for all divisors $d$ of $n$  if and only if $\deg(\phi_d(x))\le 2\ord_d(|\overline{R}|)$ for all divisors $d$ of $n$.
		
		\smallskip
		\textbf{(3)} Suppose $2\not\in U(R)$. Then $2\not\in U(R[\zeta_d])$. Note that $R[\zeta_d]$ is indecomposable. It follows
		from Lemma \ref{lemm1} that $R[\zeta_d]$ is feebly clean  if and only if $R[\zeta_d]$ is weakly clean if and only if $R[\zeta_d]$ is  clean.
		
		Furthermore, by Theorem \ref{t1}.2 and Lemma \ref{lemm2}, we have $R[G]$ is feebly clean  if and only if $R[\zeta_d]$ is feebly clean for all divisors  $d$ of $n$  if and only if $R[\zeta_d]$ is  clean for all divisors $d$ of $n$ if and only if $R[G]$ is  clean. The assertion follows.
	\end{proof}

Next we recall some facts from  elementary number theory which will be used in the proofs of the following propositions without further mention.   Let $n=q^r>2$ be a prime power and let $p$ be a prime with $p\neq q$. If $\ord_np<\ord_{q^{r+1}p}$, then $\ord_{q^{r+s}}p=q^s\ord_{q^r}p$ for every $s\in \N$. In particular, if $q$ is odd and $q\t \ord_np$, then $\ord_{q^{r+s}}p=q^s\ord_{q^r}p$ for every $s\in \N$; if $q=2$ and $4\t \ord_np$, then $\ord_{2^{r+s}}p=2^s\ord_{2^r}p$ for every $s\in \N$.

\begin{proposition}\label{p2.6}
	Let $n$ be a positive integer and let $p$ be an odd  prime with $p\nmid n$. For $r\in \N$, we set
	$\epsilon(r)=\left\{
	\begin{aligned}
	1, &\text{ if }r=1\,,\\
	2, &\text{ if }r\ge 2\,.
	\end{aligned}\right.$
	\begin{enumerate}[label={(\arabic*)}, font={\bfseries}]
		\item $\varphi(n)=\ord_np$ if and only if one of the following holds
		\begin{enumerate}
			\item $n=1$ or $2$
			
			\item $n=4$ and $p\equiv 3\pmod 4$.
			

			\item $n=q^r$ or $2q^r$, where $q$ is an odd prime and $r\in\N$,  and  $p$ is a primitive root of $q^{\epsilon(r)}$.
		\end{enumerate}
		
		\item $\varphi(n)=2\ord_np$ if and only if one of the following holds
		\begin{enumerate}
			\item $n=4$ and $p\equiv 1\pmod 4$; or $n=8$ and $p\not\equiv 1\pmod 8$.

			\item $n=2^r$ with $r\ge4$ and $p\equiv \pm3,\pm5\pmod {16}$.
			
			
			\item $n=q^r$ or $2q^r$, where $q$ is an odd prime and $r\in \N$,  and  $\ord_{q^{\epsilon(r)}}p=q^{\epsilon(r)-1}(q-1)/2$.


			\item $n=4q^r$, where $q$ is an odd prime and $r\in \N$,  and  either  $p$ is a primitive root of $q^{\epsilon(r)}$, or $p\equiv 3\pmod 4$, $q\equiv 3\pmod 4$, and $\ord_{q^{\epsilon(r)}}p=q^{\epsilon(r)-1}(q-1)/2$.

			
				
				

			\item $n=q_1^{r_1}q_2^{r_2}$ or $2q_1^{r_1}q_2^{r_2}$, where $r_1,r_2\in \N$, $q_1$ and $ q_2$ are distinct odd primes with $q_2\equiv 3\pmod 4$ and $\gcd(q_1^{\epsilon(r_1)-1}(q_1-1), q_2^{\epsilon(r_2)-1}(q_2-1))=2$,  and either $p$ is a primitive root of both $q_1^{\epsilon(r_1)}$ and $q_2^{\epsilon(r_2)}$, or $p$ is a primitive root of $q_1^{\epsilon(r_1)}$ and $\ord_{q_2^{\epsilon(r_2)}}p=q_2^{\epsilon(r_2)-1}(q_2-1)/2$.
		\end{enumerate}
	
	\item $\varphi(n)=4\ord_np$ if and only if one of the following holds
	\begin{enumerate}	
		
		\item $n=8$ and $p\equiv 1\pmod 8$; or $n=16$ and $p\equiv 7,9,15\pmod {16}$.
				
		\item $n=2^r$ with $r\ge 5$ and $p\equiv \pm7,\pm9\pmod {32}$.
		
		
		\item $n=q^r$ or $2q^r$, where  $r\in \N$, $q$ is an odd prime with $q\equiv 1\pmod 4$,  and  $\ord_{q^{\epsilon(r)}}p=q^{\epsilon(r)-1}(q-1)/4$.
		
		\item $n=q_1^{r_1}q_2^{r_2}$ or $2q_1^{r_1}q_2^{r_2}$, where $r_1,r_2\in \N$, $q_1,q_2$ are distinct  odd primes,  and  one of the following holds.
		\begin{itemize}
			\item $p$ is a primitive root of both $q_1^{\epsilon(r_1)}$ and $ q_2^{\epsilon(r_2)}$  and $\gcd(q_1^{\epsilon(r_1)-1}(q_1-1), q_2^{\epsilon(r_2)-1}(q_2-1))=4$.
			
			\item $p$ is a primitive root of $q_1^{\epsilon(r_1)}$, $\ord_{q_2^{\epsilon(r_2)}}p=q_2^{\epsilon(r_2)-1}(q_2-1)/2$,  and $\gcd(q_1^{\epsilon(r_1)-1}(q_1-1), q_2^{\epsilon(r_2)-1}(q_2-1)/2)=2$.

			\item $\ord_{q_1^{\epsilon(r_1)}}p=q_1^{\epsilon(r_1)-1}(q_1-1)/2$, $\ord_{q_2^{\epsilon(r_2)}}p=q_2^{\epsilon(r_2)-1}(q_2-1)/2$, and $\gcd(q_1^{\epsilon(r_1)-1}(q_1-1)/2, q_2^{\epsilon(r_2)-1}(q_2-1)/2)=1$.
			
			\item $p$ is a primitive root of $q_1^{\epsilon(r_1)}$, $\ord_{q_2^{\epsilon(r_2)}}p=q_2^{\epsilon(r_2)-1}(q_2-1)/4$,  and $\gcd(q_1^{\epsilon(r_1)-1}(q_1-1), q_2^{\epsilon(r_2)-1}(q_2-1)/4)=1$.			
		\end{itemize}

			\item $n=q_1^{r_1}q_2^{r_2}q_3^{r_3}$ or $2q_1^{r_1}q_2^{r_2}q_3^{r_3}$, where $r_1,r_2,r_3\in\N$, $q_1,q_2, q_3$ are distinct  odd primes,  and  one of the following holds.
		\begin{itemize}
			\item $p$ is a primitive root of  $q_1^{r_1}, q_2^{r_2}, q_3^{r_3}$  and $\gcd(q_i^{r_i-1}(q_i-1), q_j^{r_j-1}(q_j-1))=2$  for all distinct $i,j\in \{1,2,3\}$.
			
			\item $p$ is a primitive root of $q_1^{r_1}, q_2^{r_2}$, $\ord_{q_3^{r_3}}p=q_3^{r_3-1}(q_3-1)/2$ is odd,  and $\gcd(q_i^{r_i-1}(q_i-1), q_j^{r_j-1}(q_j-1))=2$ for all distinct $i,j\in \{1,2,3\}$.

			\item $p$ is a primitive root of $q_1^{r_1}$, $\ord_{q_2^{r_2}}p=q_2^{r_2-1}(q_2-1)/2$ is odd, $\ord_{q_3^{r_3}}p=q_3^{r_3-1}(q_3-1)/2$ is odd, and $\gcd(q_i^{r_i-1}(q_i-1), q_j^{r_j-1}(q_j-1))=2$ for all distinct $i,j\in \{1,2,3\}$.

%
		\end{itemize}
		
		\item $n=4q^r$, where $r\in \N$, $q$ is a prime,  and one of the following holds
		
		\begin{itemize}
			\item $p\equiv 1\pmod 4$ and $\ord_{q^r}p=q^{r-1}(q-1)/2$;
			
			\item $p\equiv 3\pmod 4$, $q\equiv 1\pmod 4$,  and $\ord_{q^r}p=q^{r-1}(q-1)/2$;
			
			\item $p\equiv 3\pmod 4$, $q\equiv 5\pmod 8$, and $\ord_{q^r}p=q^{r-1}(q-1)/4$.
		\end{itemize}
	
	\item $n=4q_1^{r_1}q_2^{r_2}$, where $r_1,r_2\in \N$, $q_1$ and $q_2$ are distinct  odd primes with $q_2\equiv 3\pmod 4$,  and one of the following holds
	
	\begin{itemize}
		
		\item  $\gcd(q_1^{\epsilon(r_1)-1}(q_1-1), q_2^{\epsilon(r_2)-1}(q_2-1))=2$,  and  $p$ is a primitive root of both $q_1^{\epsilon(r_1)},q_2^{\epsilon(r_2)}$.
		
		\item   $\gcd(q_1^{\epsilon(r_1)-1}(q_1-1), q_2^{\epsilon(r_2)-1}(q_2-1))=2$,  $p$ is a primitive root of $q_1^{\epsilon(r_1)}$, and $\ord_{q_2^{\epsilon(r_2)}}p=q_2^{\epsilon(r_2)-1}(q_2-1)/2$.
				
		\item $p\equiv 3\pmod 4$, $q_1\equiv 3\pmod 4$,  $\ord_{q_1^{r_1}}p=q_1^{r_1-1}(q_1-1)/2$, $\ord_{q_2^{r_2}}p=q_2^{r_2-1}(q_2-1)/2$, and $\gcd(q_1^{r_1-1}(q_1-1)/2, q_2^{r_2-1}(q_2-1)/2)=1$.
	\end{itemize}

		\item $n=8q^r$, where $r\in \N$, $q$ is an odd prime,  and  one of the following holds
		\begin{itemize}
			\item  $p$ is a primitive root of $q^r$;
			
			\item $p\not\equiv 1\pmod 8$, $q\equiv 3\pmod 4$, and $\ord_{q^r}p=q^{r-1}(q-1)/2$.		
		\end{itemize}

		\item $n=2^tq^r$, where $t\ge 4$ and  $q$ is an odd prime with $q\equiv 3\pmod 4$, $p\equiv \pm3,\pm5\pmod {16}$, and  either $p$ is a primitive root of $q^r$, or  $\ord_{q^r}p=q^{r-1}(q-1)/2$.

	\end{enumerate}
	\end{enumerate}
\end{proposition}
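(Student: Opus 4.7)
My approach is a systematic case analysis built on the Chinese Remainder decomposition of $(\Z/n\Z)^{\times}$. Writing $n = 2^{e_0} q_1^{r_1} \cdots q_s^{r_s}$ with the $q_i$ distinct odd primes (all coprime to $p$), the preliminaries recalled in Section~2 give
\[
\ord_n p \;=\; \lcm\bigl(\ord_{2^{e_0}} p,\,\ord_{q_1^{r_1}} p,\,\ldots,\,\ord_{q_s^{r_s}} p\bigr), \qquad \varphi(n) \;=\; \varphi(2^{e_0})\prod_{i=1}^{s}\varphi(q_i^{r_i}).
\]
Two ingredients will drive the analysis: (i) the explicit structure of $(\Z/2^{e_0}\Z)^{\times}$ pinned down in Section~2, which reads off $\ord_{2^{e_0}} p$ from the residue of $p$ modulo small powers of $2$; and (ii) the lifting-the-exponent statement immediately preceding the proposition, which collapses $\ord_{q^r} p$ to the base-level value $\ord_{q^{\epsilon(r)}} p$ and hence expresses the local quotient $\varphi(q^r)/\ord_{q^r} p$ entirely in terms of data at $q^{\epsilon(r)}$. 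After these reductions, $\varphi(n)/\ord_n p$ becomes a product of local ratios $c_i := \varphi(q_i^{\epsilon(r_i)})/\ord_{q_i^{\epsilon(r_i)}} p$ together with an overlap factor recording how the lcm of the local orders loses $2$-adic content compared to their product.

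For part (1), I first note that $\varphi(n) = \ord_n p$ is equivalent to $p$ being a primitive root modulo $n$, which forces $(\Z/n\Z)^{\times}$ to be cyclic and hence $n \in \{1,2,4,q^r,2q^r\}$. The three listed subcases then follow by invoking the lifting lemma to replace the condition ``$p$ is a primitive root of $q^r$'' by ``$p$ is a primitive root of $q^{\epsilon(r)}$''.

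For part (2), I stratify by the pair $(s,e_0)$ and determine which strata admit ratio $2$. Most strata are immediately ruled out because the lower bound on the ratio already exceeds $2$; the surviving strata are the pure $2$-power case with an appropriate $p$-residue (cases (a), (b)); a single odd prime power with trivial $2$-part and $p$ of half-primitive order (case (c)); $n = 4q^r$ with a prescribed interaction between $\ord_4 p$ and $\ord_{q^r} p$ (case (d)); and two odd prime powers with trivial $2$-part whose Euler values share exactly a factor of $2$, which forces the gcd condition $\gcd(q_1^{\epsilon(r_1)-1}(q_1-1),\,q_2^{\epsilon(r_2)-1}(q_2-1)) = 2$ together with $q_2 \equiv 3 \pmod 4$ (case (e)).

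Part (3) will follow the same template with target ratio $4$, but admits substantially more strata: a single prime-power contribution of $4$ (cases (a)--(c)); two odd prime powers sharing the factor $4$ in four essentially different partitions of local-ratio data and gcd constraints (case (d)); three odd prime powers each contributing a factor of $2$ with pairwise gcd constraints (case (e)); and mixed configurations with a nontrivial $2$-part according to $e_0 = 2,\,3$ or $e_0 \geq 4$ (cases (f)--(h)). For each listed subcase the verification that the ratio equals $4$ is a bounded computation of $\varphi(n)$ and of $\lcm$ of local orders via the lifting lemma; completeness is obtained by showing that every configuration outside the list drives the $2$-adic valuation of $\varphi(n)/\ord_n p$ either above or below $2$. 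I expect the main obstacle to be the bookkeeping in part (3): for each triple $(e_0, s, \text{residue pattern of } p)$, one must track precisely how much $2$-adic content the lcm loses from the product of local orders, and then verify that the gcd hypotheses recorded in each subcase are exactly those required to make the net ratio equal to $4$.
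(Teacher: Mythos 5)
Your plan follows essentially the same route as the paper's proof: decompose $\ord_n p$ as the $\lcm$ of the local orders at the prime-power factors of $n$, collapse $\ord_{q^r}p$ to the level $q^{\epsilon(r)}$ via the lifting lemma stated just before the proposition, and then stratify by the shape of the factorization of $n$, discarding configurations with too many prime factors by bounding the ratio $\varphi(n)/\ord_n p$ from below (the paper's Cases 1--5 of part (2) and Cases 1--7 of part (3) are exactly your strata). Note, however, that what you have written is the same skeleton the paper uses, while the entire substance of the proposition lies in the case-by-case bookkeeping you defer (in particular the exhaustive treatment of the $\gcd$ and residue conditions in parts (2)(d)--(e) and (3)(d)--(i)); the strategy is correct and would succeed, but as submitted it is an outline rather than a completed proof.
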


\begin{proof}
Note that if $n\equiv 2\pmod 4$, then $\ord_np=\ord_{n/2}p$ and $\varphi(n)=\varphi(n/2)$, whence $\varphi(n)=k\ord_np$ if and only if $\varphi(n/2)=k\ord_{n/2}p$, where $k\in \{1, 2, 4\}$. It suffices to consider the case that $n\not\equiv 2\pmod 4$.	Now suppose $n\not\equiv 2\pmod 4$. In particular, if $2\mid n$, then $4\mid n$; and if $2\nmid n$ and the assertion holds for $n$, then the assertion also holds for $2n$.

\medskip
\textbf{(1)} The assertion is clear.

\medskip

\textbf{(2)} We distinguish five cases depending on the prime divisors of $n$.

\smallskip
 {\bf Case 1}. $n=2^r$ with $r\ge 2$.
\smallskip

Suppose $\varphi(n)=2\ord_np$.
 If $n=4$, then $\ord_4p=1$ and hence  $p\equiv 1\pmod 4$. If $n=8$, then $\ord_8p=2$ and hence  $p\not\equiv 1\pmod 8$. It follows that (a) holds.

If $r\ge 4$, then $\ord_{2^r}p=\varphi(2^r)/2=2^{r-2}$, whence $p^{4}\equiv 1\pmod {16}$ but $p^2\not\equiv 1\pmod {16}$. Therefore $p\not\equiv \pm1, \pm7\pmod {16}$ and hence (b) holds.

Suppose (a) holds. It is easy to see that $\ord_np=\varphi(n)/2$. Suppose (b) holds. Then
$p^2\not\equiv 1\pmod {16}$ and $p^4\equiv 1\pmod {16}$, whence $\ord_{2^4}p=2^{4-2}$ and hence $\ord_{2^r}p=2^{r-2}=\varphi(2^r)/2$.

\smallskip
{\bf Case 2}. $n=q^r$ with $q$ an odd prime and $r\in \N$.
\smallskip

Then $\ord_{n}p=\varphi(n)/2$ if and only if  (c)  holds.

\smallskip
{\bf Case 3}. $n=2^tm$ where $m\ge 3$ is odd and $t\ge 2$.
\smallskip

Suppose $\varphi(n)=2\ord_np$.
Then $\varphi(n)/2=2^{t-2}\varphi(m)=\ord_{n}p=\lcm(\ord_{2^t}p, \ord_{m}p)$. If $t\ge 3$, then $\ord_{2^t}p$ divides $2^{t-2}$.  Since $\varphi(m)$ is even, it follows  that  $$2^{t-2}\varphi(m)=\lcm(\ord_{2^t}p, \ord_{m}p)\le \lcm(2^{t-2}, \varphi(m))\le 2^{t-3}\varphi(m)\,,$$ a contradiction. Thus $n=4m$ and, since $\varphi(m)$ is even, we have $$\varphi(m)= \lcm(\ord_{4}p, \ord_{m}p)\le \lcm(2, \varphi(m))=\varphi(m)\,.$$
Then either $\ord_mp=\varphi(m)$ or $p\equiv 3\pmod 4$ and $\ord_mp=\varphi(m)/2$ is odd.
Since $m$ is odd, both cases imply that $m=q^r$ for some odd prime $q$.
If $\ord_mp=\varphi(m)$, then $p$ is a primitive root of $q^{\epsilon(r)}$. If $p\equiv 3\pmod 4$ and $\ord_mp=\varphi(m)/2$ is odd, then $\ord_{q^r}p=q^{r-1}(q-1)/2$ is odd, implying that $q\equiv 3\pmod 4$. Therefore (d) holds.

Suppose (d) holds. Then both cases imply that $\ord_{n}p=\lcm(\ord_4p, \ord_{q^r}p)=\varphi(q^r)=\varphi(n)/2$.

\smallskip
{\bf Case 4}. $n=q_1^{r_1}q_2^{r_2}$ where  $q_1,q_2$ are distinct odd primes and $r_1, r_2\in \N$.
\smallskip

Suppose $\varphi(n)=2\ord_np$.
Then \begin{align*}
\varphi(n)/2&=q_1^{r_1-1}(q_1-1)q_2^{r_2-1}(q_2-1)/2=\ord_np=\lcm(\ord_{q_1^{r_1}}p, \ord_{q_2^{r_2}}p)\\
&=\frac{\ord_{q_1^{r_1}}p \ord_{q_2^{r_2}}p}{\gcd(\ord_{q_1^{r_1}}p, \ord_{q_2^{r_2}}p)}\le \frac{q_1^{r_1-1}(q_1-1)q_2^{r_2-1}(q_2-1)}{\gcd(\ord_{q_1^{r_1}}p, \ord_{q_2^{r_2}}p)}\,.
\end{align*}

If $\gcd(\ord_{q_1^{r_1}}p, \ord_{q_2^{r_2}}p)=2$, then $\ord_{q_1^{r_1}}p=\varphi(q_1^{r_1})$, $\ord_{q_2^{r_2}}p=\varphi(q_2^{r_2})$, and  $\gcd(q_1^{r_1-1}(q_1-1), q_2^{r_2-1}(q_2-1))=2$, whence $q_1^{r_1-1}(q_1-1)/2$ is odd or $q_2^{r_2-1}(q_2-1)/2$ is odd. By symmetry, we can assume that $q_2\equiv 3\pmod 4$. This is the first case of (e).

If $\gcd(\ord_{q_1^{r_1}}p, \ord_{q_2^{r_2}}p)=1$, then one of the following holds
\begin{itemize}
	\item $\ord_{q_1^{r_1}}p=\varphi(q_1^{r_1})$,  $\ord_{q_2^{r_2}}p=\varphi(q_2^{r_2})/2$, and  $\gcd(q_1^{r_1-1}(q_1-1), q_2^{r_2-1}(q_2-1)/2)=1$, whence $q_2\equiv 3\pmod 4$.
	
	\item $\ord_{q_1^{r_1}}p=\varphi(q_1^{r_1})/2$,  $\ord_{q_2^{r_2}}p=\varphi(q_2^{r_2})$, and  $\gcd(q_1^{r_1-1}(q_1-1)/2, q_2^{r_2-1}(q_2-1))=1$, whence $q_1\equiv 3\pmod 4$.
\end{itemize}
By symmetry, we may assume that $\gcd(q_1^{r_1-1}(q_1-1), q_2^{r_2-1}(q_2-1)/2)=1$ and  $q_2\equiv 3\pmod 4$. This is the second case of (e).

Suppose (e) holds. Then $$\ord_np=\lcm(\ord_{q_1^{r_1}}p, \ord_{q_2^{r_2}}p)=\frac{\ord_{q_1^{r_1}}p \ord_{q_2^{r_2}}p}{\gcd(\ord_{q_1^{r_1}}p, \ord_{q_2^{r_2}}p)}=\varphi(n)/2\,.$$

\smallskip
{\bf Case 5}. $n=q_1^{r_1}q_2^{r_2}q_3^{r_3}m$, where $q_1,q_2, q_3$ are pairwise distinct odd primes and $r_1, r_2, r_3, m\in \N$ with $\gcd(q_1q_2q_3, m)=1$.
\smallskip

Suppose $\varphi(n)=2\ord_np$.
Then \begin{align*}
\varphi(n)/2&=q_1^{r_1-1}(q_1-1)q_2^{r_2-1}(q_2-1)q_3^{r_3-1}(q_3-1)\varphi(m)/2=\ord_np\\
&=\lcm(\ord_{q_1^{r_1}}p, \ord_{q_2^{r_2}}p, \ord_{q_3^{r_3}}p, \ord_mp)\\
&\le \lcm(\varphi(q_1^{r_1}), \varphi(q_2^{r_2}), \varphi(q_3^{r_3}), \varphi(m))\\
&=\lcm(q_1^{r_1-1}(q_1-1), q_2^{r_2-1}(q_2-1), q_3^{r_3-1}(q_3-1), \varphi(m))\\
&\le q_1^{r_1-1}(q_1-1)q_2^{r_2-1}(q_2-1)q_3^{r_3-1}(q_3-1)\varphi(m)/4\,,
\end{align*}
a contradiction.

%
%
%
%
%
%

\bigskip
\textbf{(3)} 	We distinguish seven cases depending on the prime divisors of $n$.

\smallskip
{\bf Case 1}. $n=2^r$ with $r\in \N$.
\smallskip

Suppose $\varphi(n)=4\ord_np\ge 4$. Then $r\ge 3$. If $n=8$, then $\ord_8p=1$ and hence  $p\equiv 1\pmod 8$.
If $n=16$, then $\ord_{16}p=2$ and hence $p\not\equiv 1\pmod {16}$ and $p^2\equiv 1\pmod {16}$, whence $p\equiv -1, \pm 7 \pmod {16}$. Then (a) holds.

If $r\ge 5$, then $\ord_{2^r}p=\varphi(2^r)/4=2^{r-3}$, whence $p^{4}\equiv 1\pmod {32}$ but $p^2\not\equiv 1\pmod {32}$. Therefore $p\equiv \pm7, \pm9\pmod {32}$ and hence (b) holds.

Suppose (a) holds. It is easy to see that $\ord_np=\varphi(n)/2$. Suppose (b) holds. Then
$p^4\equiv 1\pmod {32}$ and $p^2\not\equiv 1\pmod {32}$, whence $\ord_{2^5}p=2^{5-2}$ and hence $\ord_{2^r}p=2^{r-3}=\varphi(2^r)/4$.

\smallskip
{\bf Case 2}. $n=q^r$ with $q$ an odd prime and $r\in \N$.
\smallskip

Then $\ord_{n}p=\varphi(n)/4=q^{r-1}(q-1)/4$ if and only if  (c)  holds.

\smallskip
{\bf Case 3}. $n=q_1^{r_1}q_2^{r_2}$ with $q_1,q_2$ distinct odd primes and $r_1, r_2\in \N$.
\smallskip

By symmetry, we may assume that $\frac{\varphi(q_1^{r_1})}{\ord_{q_1^{r_1}}p}\le \frac{\varphi(q_2^{r_2})}{\ord_{q_2^{r_2}}p}$.

Suppose $\varphi(n)=4\ord_np$.
Then \begin{align*}
\varphi(n)/4&=q_1^{r_1-1}(q_1-1)q_2^{r_2-1}(q_2-1)/4=\ord_np=\lcm(\ord_{q_1^{r_1}}p, \ord_{q_2^{r_2}}p)\\
&=\frac{\ord_{q_1^{r_1}}p \ord_{q_2^{r_2}}p}{\gcd(\ord_{q_1^{r_1}}p, \ord_{q_2^{r_2}}p)}\le \frac{q_1^{r_1-1}(q_1-1)q_2^{r_2-1}(q_2-1)}{\gcd(\ord_{q_1^{r_1}}p, \ord_{q_2^{r_2}}p)}\,.
\end{align*}

If $\gcd(\ord_{q_1^{r_1}}p, \ord_{q_2^{r_2}p})=4$, then $\ord_{q_1^{r_1}}p=\varphi(q_1^{r_1})$ and $\ord_{q_2^{r_2}}p=\varphi(q_2^{r_2})$. This is the first case of (d).
If $\gcd(\ord_{q_1^{r_1}}p, \ord_{q_2^{r_2}p})=2$, then  $\ord_{q_1^{r_1}}p=\varphi(q_1^{r_1})$ and $\ord_{q_2^{r_2}}p=\varphi(q_2^{r_2})/2$. This is the second case of (d).
If $\gcd(\ord_{q_1^{r_1}}p, \ord_{q_2^{r_2}p})=1$, then one of the following holds
\begin{itemize}
	\item $\ord_{q_1^{r_1}}p=\varphi(q_1^{r_1})$,  $\ord_{q_2^{r_2}}p=\varphi(q_2^{r_2})/4$, and  $\gcd(q_1^{r_1-1}(q_1-1), q_2^{r_2-1}(q_2-1)/4)=1$.
	
	\item $\ord_{q_1^{r_1}}p=\varphi(q_1^{r_1})/2$,  $\ord_{q_2^{r_2}}p=\varphi(q_2^{r_2})/2$, and  $\gcd(q_1^{r_1-1}(q_1-1)/2, q_2^{r_2-1}(q_2-1)/2)=1$.
\end{itemize}
These are the third and fourth cases of (d).

Suppose (d) holds. Then $$\ord_np=\lcm(\ord_{q_1^{r_1}}p, \ord_{q_2^{r_2}}p)=\frac{\ord_{q_1^{r_1}}p \ord_{q_2^{r_2}}p}{\gcd(\ord_{q_1^{r_1}}p, \ord_{q_2^{r_2}}p)}=\varphi(n)/4\,.$$

\smallskip
{\bf Case 4}. $n=q_1^{r_1}q_2^{r_2}q_3^{r_3}$ with $q_1,q_2, q_3$  pairwise distinct odd primes and $r_1,r_2,r_3\in \N$.
\smallskip

By symmetry, we may assume that $\frac{\varphi(q_1^{r_1})}{\ord_{q_1^{r_1}}p}\le \frac{\varphi(q_2^{r_2})}{\ord_{q_2^{r_2}}p}\le \frac{\varphi(q_3^{r_3})}{\ord_{q_3^{r_3}}p}$.

Suppose $\varphi(n)=4\ord_np$.
Then \begin{align*}
\varphi(n)/4&=q_1^{r_1-1}(q_1-1)q_2^{r_2-1}(q_2-1)q_3^{r_3-1}(q_3-1)/4=\ord_np\\
&=\lcm(\ord_{q_1^{r_1}}p, \ord_{q_2^{r_2}}p, \ord_{q_3^{r_3}}p)\\
&\le \lcm(\varphi(q_1^{r_1}), \varphi(q_2^{r_2}), \varphi(q_3^{r_3}))\\
&=\lcm(q_1^{r_1-1}(q_1-1), q_2^{r_2-1}(q_2-1), q_3^{r_3-1}(q_3-1))\\
&\le q_1^{r_1-1}(q_1-1)q_2^{r_2-1}(q_2-1)q_3^{r_3-1}(q_3-1)/4\,,
\end{align*}
which implies  (e) holds.

Suppose (e) holds. Then it is easy to see that $\varphi(n)=4\ord_np$.

\smallskip
{\bf Case 5}. $n=q_1^{r_1}q_2^{r_2}q_3^{r_3}q_4^{r_4}m$, where  $q_1,q_2, q_3, q_4$ are pairwise distinct odd primes and $r_1,r_2,r_3,m\in \N$ with $\gcd(q_1q_2q_3q_4, m)=1$.
\smallskip

Suppose $\varphi(n)=4\ord_np$.
Then \begin{align*}
\varphi(n)/4&=q_1^{r_1-1}(q_1-1)q_2^{r_2-1}(q_2-1)q_3^{r_3-1}(q_3-1)q_4^{r_4-1}(q_4-1)\varphi(m)/4=\ord_np\\
&=\lcm(\ord_{q_1^{r_1}}p, \ord_{q_2^{r_2}}p, \ord_{q_3^{r_3}}p, \ord_{q_4^{r_4}}p,\ord_mp)\\
&\le \lcm(\varphi(q_1^{r_1}), \varphi(q_2^{r_2}), \varphi(q_3^{r_3}),\varphi(q_4^{r_4}), \varphi(m))\\
&=\lcm(q_1^{r_1-1}(q_1-1), q_2^{r_2-1}(q_2-1), q_3^{r_3-1}(q_3-1), q_4^{r_4-1}(q_4-1),\varphi(m))\\
&\le q_1^{r_1-1}(q_1-1)q_2^{r_2-1}(q_2-1)q_3^{r_3-1}(q_3-1)q_4^{r_4-1}(q_4-1)\varphi(m)/8\,,
\end{align*}
a contradiction.

\smallskip
{\bf Case 6}. $n=2^tq^r$ with $q$  an odd prime, $r\in\N$, and $t\ge 2$.
\smallskip

Suppose $\varphi(n)=4\ord_np$.
Then $\varphi(n)/4=2^{t-3}q^{r-1}(q-1)=\ord_{n}p=\lcm(\ord_{2^t}p, \ord_{q^r}p)$.

If $t=2$, then $q^{r-1}(q-1)/2=\lcm(\ord_4p, \ord_{q^r}p)$ and hence one of the following holds
\begin{itemize}
	\item $\ord_4p=1$ and $\ord_{q^r}p=q^{r-1}(q-1)/2$. This is the first case of (f).

	\item $\ord_4p=2$ and $\ord_{q^r}p=q^{r-1}(q-1)/2$ is even, whence $q\equiv 1\pmod 4$. This is the second case of (f).

	\item $\ord_4p=2$ and $\ord_{q^r}p=q^{r-1}(q-1)/4$ is odd, whence $q\equiv 5\pmod 8$. This is the third case of (f).
\end{itemize}

If $t=3$, then $q^{r-1}(q-1)=\lcm(\ord_8p, \ord_{q^r}p)$ and hence one of the following holds
\begin{itemize}
	\item  $\ord_{q^r}p=q^{r-1}(q-1)$. This is the first case of (h).

	\item $\ord_8p=2$ and $\ord_{q^r}p=q^{r-1}(q-1)/2$ is odd, whence $q\equiv 3\pmod 4$. This is the second case of (h).
\end{itemize}

If $t\ge 4$, then $$2^{t-3}q^{r-1}(q-1)=\lcm(\ord_{2^t}p, \ord_{q^r}p)=\frac{\ord_{2^t}p \ord_{q^r}p}{\gcd(\ord_{2^t}p, \ord_{q^r}p)}\le \frac{2^{t-2}q^{r-1}(q-1)}{\gcd(\ord_{2^t}p, \ord_{q^r}p)}$$ and hence one of the following holds.
\begin{itemize}
	\item $\gcd(\ord_{2^t}p, \ord_{q^r}p)=2$, $\ord_{2^t}p=2^{t-2}\ge 4$, and $\ord_{q^r}p=q^{r-1}(q-1)$, whence $\ord_{q^r}p/2=q^{r-1}(q-1)/2$ is odd, i.e., $q\equiv 3\pmod 4$.
	
	\item  $\gcd(\ord_{2^t}p, \ord_{q^r}p)=1$, either $\ord_{2^t}p=2^{t-3}$ and $\ord_{q^r}p=q^{r-1}(q-1)$, or  $\ord_{2^t}p=2^{t-2}$ and $\ord_{q^r}p=q^{r-1}(q-1)/2$. Since $\gcd(2^{t-3}, q^{r-1}(q-1))\ge 2$, the latter holds and hence $q^{r-1}(q-1)/2$ is old, i.e., $q\equiv 3\pmod 4$.
\end{itemize}
Note that $\ord_{2^t}p=2^{t-2}$ if and only if $\ord_{16}p=4$ if and only if $p\equiv \pm3,\pm5\pmod {16}$. Thus
both cases imply that (i) holds.

Suppose  (f), or (h), or (i) holds. It is easy to check that  $\varphi(n)=4\ord_np$.

\smallskip
{\bf Case 7}. $n=2^tq_1^{r_1}q_2^{r_2}m$ where $r_1,r_2\in \N$, $q_1$ and $q_2$ are distinct odd primes, $t\ge 2$, $m$ is odd, and $\gcd(q_1q_2,m)=1$.
\smallskip

Suppose $\varphi(n)=4\ord_np$.
Then \begin{align*}
&\varphi(n)/4=2^{t-3}q_1^{r_1-1}(q_1-1)q_2^{r_2-1}(q_2-1)\varphi(m)\\
=&\ord_np=\lcm(\ord_{2^t}p, \ord_{q_1^{r_1}}p, \ord_{q_2^{r_2}}p, \ord_mp)\\
\le &\lcm(\ord_{2^t}p, q_1^{r_1-1}(q_1-1), q_2^{r_2-1}(q_2-1), \varphi(m))\,.
\end{align*}
If $t\ge 3$, then $$\lcm(\ord_{2^t}p, q_1^{r_1-1}(q_1-1), q_2^{r_2-1}(q_2-1), \varphi(m))\le 2^{t-4}q_1^{r_1-1}(q_1-1)q_2^{r_2-1}(q_2-1)\varphi(m)<\varphi(n)/4\,,$$ a contradiction. Thus $t=2$.
If $m\ge 3$, then $\varphi(m)$ is even and
$$\lcm(\ord_{2^t}p, q_1^{r_1-1}(q_1-1), q_2^{r_2-1}(q_2-1), \varphi(m))\le  \frac{q_1^{r_1-1}(q_1-1)q_2^{r_2-1}(q_2-1)\varphi(m)}{4\ord_4p}\,,$$ whence $2\ord_{4}p\le 1$, a contradiction.

Therefore $n=4q_1^{r_1}q_2^{r_2}$ and hence
$q_1^{r_1-1}(q_1-1)q_2^{r_2-1}(q_2-1)/2=\lcm(\ord_{4}p, \ord_{q_1^{r_1}q_2^{r_2}}p)$. It follows that  one of the following holds.
\begin{itemize}
	\item $\ord_{q_1^{r_1}q_2^{r_2}}p=q_1^{r_1-1}(q_1-1)q_2^{r_2-1}(q_2-1)/2$. By 2(e), we obtain this is the first and second  cases of (g).
	
	\item  $\ord_4p=2$ and $\ord_{q_1^{r_1}q_2^{r_2}}p=q_1^{r_1-1}(q_1-1)q_2^{r_2-1}(q_2-1)/4$ is odd, whence $p\equiv 3\pmod 4$, $q_1\equiv 3\pmod 4$, and $q_2\equiv 3\pmod 4$. By 3(d), we obtain that this is the third case of (g).
\end{itemize}

Suppose (g) holds. It is easy to check that $\varphi(n)=4\ord_np$.
\end{proof}

\begin{proposition}\label{p3.2}
	Let $n$ be a positive integer and let $p$ be an odd prime with $p\nmid n$. \begin{enumerate}[label={(\arabic*)}, font={\bfseries}]
		\item $\ord_np=\varphi(n)/2$ is odd if and only if one of the following holds
		\begin{enumerate}
			\item $n=4$ and $p\equiv 1\pmod 4$;
			
			\item $n=q^r$ or $2q^r$, where $r\in \N$ and $q$ is an odd prime with $q\equiv 3\pmod 4$,  and  $\ord_{q^{\epsilon(r)}}p=q^{\epsilon(r)-1}(q-1)/2$.
		\end{enumerate}
		
%
%
%
%
%
%
%
%
%
%
%

		\item
	 $\ord_np=\varphi(n)/2$ and $\ord_mp=\varphi(m)$ for all divisor $m$ of $n$ with $n\neq m$ if and only if one of the following holds
	\begin{enumerate}
		\item $n=4$ and $p\equiv 1\pmod 4$; or $n=8$ and $p\equiv 3\pmod 4$.

		\item $n=q$ is an odd prime  and  $\ord_{q}p=(q-1)/2$.

		\item $n=4q$, where $q$ is an odd prime, $p\equiv 3\pmod 4$, and    $p$ is a primitive root of $q$.

			\item $n=q_1q_2$ is a product of  two distinct primes, $p$ is a primitive root of unity of both $q_1$ and $q_2$, and $\gcd(q_1-1,q_2-1)=2$.
	\end{enumerate}

\item $\ord_np=\varphi(n)/4$ is odd if and only if one of the following holds
\begin{enumerate}	
	
	\item $n=8$ and $p\equiv 1\pmod 8$.
	
	\item $n=q^r$ or $2q^r$, where $q$ is an odd prime with $q\equiv 5\pmod 8$,  and  $\ord_{q^{\epsilon(r)}}p=q^{\epsilon(r)-1}(q-1)/4$.
	
	\item $n=q_1^{r_1}q_2^{r_2}$ or $2q_1^{r_1}q_2^{r_2}$, where $q_1,q_2$ are distinct  odd primes with $q_1\equiv 3\pmod 4$ and $q_2\equiv 3\pmod 4$,  and one of the following holds
	\begin{itemize}
			\item $p$ is a primitive root of $q_1^{\epsilon(r_1)}$, $\ord_{q_2^{\epsilon(r_2)}}p=q_2^{\epsilon(r_2)-1}(q_2-1)/2$,  and $\gcd(q_1^{\epsilon(r_1)-1}(q_1-1), q_2^{\epsilon(r_2)-1}(q_2-1)/2)=2$.

		\item $\ord_{q_1^{\epsilon(r_1)}}p=q_1^{\epsilon(r_1)-1}(q_1-1)/2$, $\ord_{q_2^{\epsilon(r_2)}}p=q_2^{\epsilon(r_2)-1}(q_2-1)/2$, and $\gcd(q_1^{\epsilon(r_1)-1}(q_1-1)/2, q_2^{\epsilon(r_2)-1}(q_2-1)/2)=1$.
	\end{itemize}

%
%

	\item $n=4q^r$, where $q$ is a prime with $q\equiv 3\pmod 4$,
	 $p\equiv 1\pmod 4$, and $\ord_{q^r}p=q^{r-1}(q-1)/2$.
\end{enumerate}

	\item $\varphi(n)=4\ord_np$ and $\varphi(m)\neq 4\ord_mp$ for all divisor $m$ of $n$ with $m\neq n$ if and only if one of the following holds
\begin{enumerate}	
	
	\item $n=8$ and $p\equiv 1\pmod 8$; or $n=16$ and $p\equiv 7,9,15\pmod {16}$.
	
	\item $n=q$, where $q$ is an odd prime with $q\equiv 1\pmod 4$,  and  $\ord_{q}p=(q-1)/4$.
	
	\item $n=q_1q_2$, where $q_1,q_2$ are distinct  odd primes,  and  one of the following holds.
	\begin{itemize}
		\item $p$ is a primitive root of both $q_1, q_2$  and $\gcd(q_1-1, q_2-1)=4$.
		
		\item $p$ is a primitive root of $q_1$, $\ord_{q_2}p=(q_2-1)/2$,  and $\gcd(q_1-1, (q_2-1)/2)=2$.

		\item $\ord_{q_1}p=(q_1-1)/2$, $\ord_{q_2}p=(q_2-1)/2$, and $\gcd((q_1-1)/2, (q_2-1)/2)=1$.	
	\end{itemize}

	\item $n=q_1q_2q_3$, where $q_1,q_2, q_3$ are distinct  odd primes,  and  one of the following holds.
	\begin{itemize}
		\item $p$ is a primitive root of  $q_1, q_2, q_3$  and $\gcd(q_i-1, q_j-1)=2$  for all distinct $i,j\in \{1,2,3\}$.
		
		\item $p$ is a primitive root of $q_1, q_2$, $\ord_{q_3}p=(q_3-1)/2$ is odd,  and $\gcd(q_i-1, q_j-1)=2$ for all distinct $i,j\in \{1,2,3\}$.	
		
	\end{itemize}
	
	\item $n=4q$, where $q$ is a prime,  and one of the following holds
	
	\begin{itemize}
		\item $p\equiv 1\pmod 4$ and $\ord_{q}p=(q-1)/2$;
		
		\item $p\equiv 3\pmod 4$, $q\equiv 1\pmod 4$,  and $\ord_{q}p=(q-1)/2$;
	\end{itemize}
	
	\item $n=4q_1q_2$, where $q_1,q_2$ are distinct  odd primes with $q_2\equiv 3\pmod 4$,  and one of the following holds
	
	\begin{itemize}
		\item  $\gcd(q_1-1, q_2-1)=2$ and $p$ is a primitive root of both $q_1$ and $q_2$.

		\item $\gcd(q_1-1, q_2-1)=2$, $p\equiv 3\pmod 4$, $p$ is a primitive root of  $q_1$, $\ord_{q_2}p=(q_2-1)/2$,

		\item $\gcd((q_1-1)/2, (q_2-1)/2)=1$, $p\equiv 3\mod 4$, $q_1\equiv 3\pmod 4$,  $\ord_{q_1}p=(q_1-1)/2$ and  $\ord_{q_2}p=(q_2-1)/2$.
	\end{itemize}

	\item $n=8q$, where $q$ is an odd prime,  and  one of the following holds
	\begin{itemize}
		\item  $p$ is a primitive root of $q$;
		
		\item $p\not\equiv 1\pmod 8$, $q\equiv 3\pmod 4$, and $\ord_{q}p=(q-1)/2$.		
	\end{itemize}
	
\end{enumerate}
	
	\item $\varphi(n)=4\ord_np$ and $\varphi(m)=\ord_mp$ for all divisors $m$ of $n$ with $m\neq n$ if and only if one of the following holds.
	
	\begin{itemize}
	
		\item $n=q$, where $q$ is an odd prime with $q\equiv 1\pmod 4$,  and  $\ord_{q}p=(q-1)/4$.
		
		\item $n=q_1q_2$, where $q_1,q_2$ are distinct  odd primes,  and  $p$ is a primitive root of both $q_1, q_2$  and $\gcd(q_1-1, q_2-1)=4$.
	
	\end{itemize}

\end{enumerate}
\end{proposition}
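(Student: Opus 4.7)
The five statements in Proposition \ref{p3.2} are refinements of Proposition \ref{p2.6}: parts (1) and (2) are extracted from \ref{p2.6}(2) (the classification of when $\varphi(n)=2\ord_n p$), while parts (3), (4), and (5) are extracted from \ref{p2.6}(3) (the classification of when $\varphi(n)=4\ord_n p$). In each case my plan is to take the master list of cases from Proposition \ref{p2.6} and prune out the sub-cases that do not satisfy the additional constraint attached to the part in question. The standard reductions $\ord_n p = \ord_{n/2} p$ and $\varphi(n)=\varphi(n/2)$ when $n \equiv 2 \pmod 4$, already noted at the start of the proof of \ref{p2.6}, allow me to assume $n \not\equiv 2 \pmod 4$ throughout; this reduction is exactly what produces the ``$n=q^r$ or $2q^r$'' style dichotomies in the conclusions.

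\textbf{Parity filtering for parts (1) and (3).} The identity $\ord_n p = \lcm_{q^r \,\|\, n}\ord_{q^r} p$ forces $\ord_n p$ to be odd if and only if every $\ord_{q^r} p$ is odd. For $q=2$ this rules out $2^t$ with $t \ge 3$ and forces $\ord_4 p = 1$, i.e.\ $p \equiv 1 \pmod 4$, whenever $4 \mid n$; for odd prime powers $q^r$, since $\ord_{q^r} p$ divides $q^{r-1}(q-1)$ and is odd, one must have $r=1$ and $(q-1)/2 \mid \ord_q p$ with $q \equiv 3 \pmod 4$ in part (1), or $(q-1)/4 \mid \ord_q p$ with $q \equiv 5 \pmod 8$ in part (3) (after the factor of 4 appears). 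Walking through cases (a)-(e) of \ref{p2.6}(2) and (a)-(i) of \ref{p2.6}(3) and discarding every sub-case that produces an even quotient leaves precisely the lists given in (1)(a)-(b) and (3)(a)-(d).

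\textbf{Minimality filtering for parts (2), (4), (5).} For part (2), the hypothesis that $\ord_m p = \varphi(m)$ for every proper divisor $m$ of $n$, combined with \ref{p2.6}(1), forces $n$ to involve at most two prime-power factors and essentially forbids exponents $\ge 2$ (any such exponent would make some proper divisor fail the primitive-root test). Running this observation through the list of \ref{p2.6}(2) leaves only cases (2)(a)-(d). For part (4), the hypothesis is that $n$ is minimal with $\varphi(n)=4\ord_n p$; this eliminates each case of \ref{p2.6}(3) in which a proper sub-modulus of $n$ already realises the factor $4$, leaving exactly (4)(a)-(g). Part (5) combines both flavors: $\varphi(n)=4\ord_n p$ together with $p$ a primitive root modulo every proper divisor, which by \ref{p2.6}(1) is extremely restrictive and, as a consequence, collapses to the two listed sub-cases.

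\textbf{Main obstacle.} Conceptually the proof is routine once Proposition \ref{p2.6} is available; the real difficulty is the sheer bookkeeping. There are several dozen sub-cases across the five parts, and in each one I must check carefully that the surviving congruence conditions on $p$, the $q_i$, and the $r_i$ match exactly the clauses displayed in \ref{p3.2}. The trickiest sub-cases are those involving $2^t$ with $t \ge 3$ together with one or two odd prime powers, where the interaction of $\ord_{2^t} p$ with $\gcd(\ord_{2^t} p, \ord_{q^r} p)$ must be tracked precisely; I would handle these by repeatedly applying $\ord_{nn'} p = \lcm(\ord_n p, \ord_{n'} p)$ for coprime $n, n'$, together with the standard fact that a primitive root mod $q^{r+1}$ is a primitive root mod $q^r$, so that the descent among divisors is controlled.
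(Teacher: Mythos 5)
Your overall strategy --- read Proposition \ref{p3.2} off the case lists of Proposition \ref{p2.6}, filtering by oddness of $\varphi(n)/2$ (resp.\ $\varphi(n)/4$) for parts (1) and (3) and by the condition on proper divisors $m$ of $n$ for parts (2), (4) and (5) --- is exactly what the paper does; its proof of this proposition consists of precisely this item-by-item check against Proposition \ref{p2.6}.2 and \ref{p2.6}.3 and nothing more, so in approach you and the paper coincide.

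There is, however, one concrete step in your parity filtering that is false and would corrupt the lists if carried out as written: you assert that for an odd prime power $q^r$, oddness of $\ord_{q^r}p$ forces $r=1$. It does not, because $q^{r-1}$ is odd for every odd prime $q$; thus $\varphi(q^r)/2=q^{r-1}(q-1)/2$ is odd precisely when $q\equiv 3\pmod 4$, and $\varphi(q^r)/4=q^{r-1}(q-1)/4$ is odd precisely when $q\equiv 5\pmod 8$, with $r$ arbitrary in both cases. This is exactly why items (1)(b), (3)(b), (3)(c) and (3)(d) of the proposition retain general exponents $r$, $r_1$, $r_2$; your argument as stated would wrongly delete all of them. (A related overstatement occurs in your treatment of part (2): the claim that the primitive-root condition on proper divisors ``essentially forbids exponents $\ge 2$'' is contradicted by the surviving cases $n=8$ and $n=4q$, and for odd $q$ the correct reason $q^2\nmid n$ is not parity but the fact that $\ord_{q^2}p\in\{\ord_q p,\, q\ord_q p\}$ can never equal $q(q-1)/2$ when $\ord_q p=q-1$.) Once the parity test is corrected to ``$(q-1)/2$ odd, $r$ arbitrary'' (resp.\ ``$(q-1)/4$ odd, $r$ arbitrary''), the filtering goes through and reproduces the stated lists, matching the paper's proof.
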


\begin{proof}
If we know the factorization of $n$, it is easy to see when $\varphi(n)/2$ and $\varphi(n)/4$ are odd. Furthermore,
if we know $\ord_n p$, it is easy to obtain $\ord_mp$ for every divisor $m$ of $n$ with $m\neq n$.

\textbf{(1)} This is done by checking each item of Proposition \ref{p2.6}.2  to see when $\varphi(n)/2$ is odd.

\textbf{(2)}  	This is done by checking each item of Proposition \ref{p2.6}.2 to see when $p$ is a primitive root of  every divisor $m$ of $n$ with $m\neq n$.

\textbf{(3)}  This is done  by checking each item of Proposition \ref{p2.6}.3 to see when $\varphi(n)/4$ is odd.

\textbf{(4)}  	This is done by checking each item of Proposition \ref{p2.6}.3 to see when $\ord_mp\neq \varphi(m)/4$  for every divisor $m$   of $n$ with $m\neq n$.

\textbf{(5)} 	This is done by checking each item of Proposition \ref{p2.6}.3 to see when $p$ is a primitive root of  every divisor $m$ of $n$ with $m\neq n$.
	
%
%
%
%
%
%
%
%
%
\end{proof}

\bigskip
\section{Group rings over local subrings of algebraic number fields}
\medskip

	Let $K$ be an algebraic number field and let $\mathcal O$ its ring of integers. If $\mathfrak p\subset \mathcal O$ is a nonzero prime ideal with $2\Z=\mathfrak p\cap \Z$ and $G$ is a finite abelian group with odd $\exp(G)$, then it follows immediately from Proposition \ref{2.5}.3 that
	 the group ring $\mathcal O_{\mathfrak p}[G]$ is feebly clean if and only if
	  $\mathcal O_{\mathfrak p}[G]$ is  weakly clean if and only if $\mathcal O_{\mathfrak p}[G]$ is   clean.
When $K$ is a cyclotomic field or a quadratic field, a complete characterization of  when $\mathcal O_{\mathfrak p}[G]$ is  clean was given in \cite[Theorem 1.1 and Theorem 1.3]{Li-Zh20}. In particular, $\Z_{(2)}[G]$ is feebly clean if and only if $\Z_{(2)}[G]$ is weakly clean if and only if  $\Z_{(2)}[G]$ is  clean if and only if
$2$ is a primitive root of $\exp(G)$.

Next we let $\mathfrak p\subset \mathcal O$ be a nonzero prime ideal with $p\Z=\mathfrak p\cap \Z$, where $p$ is an odd prime.
The following theorem provides a complete characterization for  $\Z_{(p)}[G]$  to be clean, weakly clean and feebly clean, extending the main results of McGovern \cite{Mc18}.

\begin{theorem} \label{thm1}
	Let $p\neq 2$ be a prime and let $G$ be a finite abelian group with exponent $n$ and  $p\nmid n$.
	Then
	\begin{enumerate}[label={(\arabic*)}, font={\bfseries}]
		\item $\Z_{(p)}[G]$ is clean if and only if $\Z_{(p)}[\zeta_n]$ is clean if and only if $p$ is a primitive root of $n$.
		
		\item $\Z_{(p)}[G]$ is feebly clean but not clean if and only if $\Z_{(p)}[\zeta_n]$ is feebly clean but  not clean if and only if  $\varphi(n)=2\ord_np$ (for a more detailed characterization, see Proposition \ref{p2.6}.2).
		
		\item $\Z_{(p)}[G]$ is weakly clean but not clean if and only if $G\cong C_n$ and  one of the following holds
		\begin{enumerate}
			\item either $n=4$ and $p\equiv 1\pmod 4$, or $n=8$ and $p\equiv 3\pmod 4$.
			
			\item $n$ is an odd prime with $\ord_np=(n-1)/2$.

			\item $n=4q$ for some odd prime $q$ and $p$ is a primitive root of unity of $q$.
			
			\item $n=q_1q_2$ is a product of  two distinct primes, $p$ is a primitive root of unity of both $q_1$ and $q_2$, and $\gcd(q_1-1,q_2-1)=2$. 	
		\end{enumerate}
	\end{enumerate}
\end{theorem}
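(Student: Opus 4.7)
The plan is to specialize Proposition~\ref{2.5} and Theorem~\ref{t1} to $R = \Z_{(p)}$, and then invoke Proposition~\ref{p3.2} for the final number-theoretic classification. The key structural input is that $\Phi_d(x)$ is irreducible over $\Z_{(p)}$ (by Gauss's lemma, since it is irreducible over $\Q$ and $\Z_{(p)}[x]$ is a UFD), so $\nu(d)=1$, $\phi_d = \Phi_d$, and $\deg\phi_d=\varphi(d)$; moreover, the residue field is $\F_p$, so $\ord_d(|\overline R|)=\ord_d p$. Theorem~\ref{t1} thus yields the decomposition
\[
\Z_{(p)}[G]\ \cong\ \bigoplus_{d\mid n}\mu(d)\cdot \Z_{(p)}[\zeta_d],
\]
in which every summand $\Z_{(p)}[\zeta_d]$ is an indecomposable domain in which $2$ is a unit (since $p$ is odd).

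For Part~(1), Proposition~\ref{2.5}.(1) says $\Z_{(p)}[G]$ is clean iff $\varphi(d)=\ord_d p$ for every $d\mid n$, and specializing to $d=n$ gives cleanness of $\Z_{(p)}[\zeta_n]$. Conversely, if $p$ is a primitive root of $n$, then $n\in\{1,2,4,q^k,2q^k\}$, every divisor $d$ of $n$ has the same form, and the surjection $(\Z/n\Z)^{\times}\twoheadrightarrow (\Z/d\Z)^{\times}$ carries the generator $p$ to a generator, so $p$ is a primitive root of every $d\mid n$. For Part~(2), Proposition~\ref{2.5}.(2) reduces feeble cleanness to $\varphi(d)\le 2\ord_d p$ for all $d\mid n$. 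If $\varphi(n)=2\ord_n p$, then $\langle p\rangle$ has index $2$ in $(\Z/n\Z)^{\times}$, and surjectivity of the reduction map forces $\varphi(d)\le 2\ord_d p$ for every $d$; non-cleanness then follows from Part~(1). Conversely, feebly clean but not clean forces $\varphi(n)\le 2\ord_n p$ and $\varphi(n)\ne \ord_n p$, giving the required equality.

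For Part~(3), I apply Lemma~\ref{lemm2}.(2) to the decomposition. By Lemma~\ref{lemm1} together with the fact that $2$ is a unit, each indecomposable factor $\Z_{(p)}[\zeta_d]$ is either clean ($\varphi(d)=\ord_d p$), or weakly clean but not clean ($\varphi(d)=2\ord_d p$), or has $\ge 3$ maximal ideals and fails to be weakly clean. Hence $\Z_{(p)}[G]$ is weakly clean but not clean iff exactly one copy across the entire direct sum is non-clean. Ruling out $\varphi(n)=\ord_n p$ (which by Part~(1) makes the whole ring clean) and $\varphi(n)>2\ord_n p$ (which makes $\Z_{(p)}[\zeta_n]$ fail to be weakly clean), one is left with $\varphi(n)=2\ord_n p$, so that $\Z_{(p)}[\zeta_n]$ is the designated non-clean factor. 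Uniqueness forces $\mu(n)=1$, which is equivalent to $G\cong C_n$, because any non-cyclic finite abelian group of exponent $n$ contains at least two cyclic subgroups of order $n$. Once $G\cong C_n$, one has $\mu(d)=1$ for every $d\mid n$, and uniqueness further demands $\varphi(m)=\ord_m p$ for every proper divisor $m\mid n$. The list (a)--(d) is then read off directly from Proposition~\ref{p3.2}.(2). The main obstacle is precisely this reduction to $G\cong C_n$ via the single-non-clean-copy count; once $G$ is pinned down as cyclic, Proposition~\ref{p3.2}.(2) supplies the complete classification.
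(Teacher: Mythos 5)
Your proposal is correct and follows essentially the same route as the paper: decompose $\Z_{(p)}[G]$ via Theorem~\ref{t1} using the irreducibility of $\Phi_d(x)$ over $\Z_{(p)}$, count maximal ideals of each factor $\Z_{(p)}[\zeta_d]$ via Proposition~\ref{2.5}, apply Lemmas~\ref{lemm1} and~\ref{lemm2} (in particular the at-most-one-non-clean-factor criterion, which forces $G\cong C_n$ in part~(3)), and read off the list (a)--(d) from Proposition~\ref{p3.2}.2. The only cosmetic difference is that you make explicit the surjectivity argument $(\Z/n\Z)^{\times}\twoheadrightarrow(\Z/d\Z)^{\times}$ for passing from $n$ to its divisors, which the paper leaves implicit.
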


\begin{proof}
	\textbf{(1)} Since $\Phi_d(x)$ is a monic irreducible polynomial over $\Z_{(p)}$ and $|\overline{\Z_{(p)}}|=p$, it follows from Proposition \ref{2.5}.1 that $\Z_{(p)}[G]$ is clean if and only if $\deg(\Phi_d(x))=\ord_d(p)=\varphi(d)$ for all divisors $d$ of $n$  if and only if $p$ is a primitive root of $d$ for all divisors $d$ of $n$ if and only if $p$ is a primitive root of $n$ if and only if $\Z_{(p)}[\zeta_n]$ is clean.

	\textbf{(2)} Suppose $\Z_{(p)}[G]$ is feebly clean but not clean. Then by (1) we have $\Z_{(p)}[\zeta_n]$ is not clean and by Theorem \ref{t1}.2 and Lemma \ref{lemm1}.1 we have $\Z_{(p)}[\zeta_n]$ is feebly clean.
	
	Suppose $\Z_{(p)}[\zeta_n]$ is feebly clean but not clean.	
	It follows from
 Proposition \ref{2.5}.2  that
	 $\varphi(n)=2\ord_np$.

	Suppose $\varphi(n)=2\ord_np$. Then for all divisors $d$ of $n$, we have $\deg(\Phi_d(x))=\varphi(d)=\ord_dp=\ord_d|\overline{\Z_{(p)}}|$ or $\deg(\Phi_d(x))=\varphi(d)=2\ord_dp=2\ord_d|\overline{\Z_{(p)}}|$.
	It follows from
	Proposition \ref{2.5} that $\Z_{(p)}[\zeta_n]$ is not clean and $\Z_{(p)}[\zeta_d]$ is feebly clean for all divisors $d$ of $n$. Therefore Theorem \ref{t1} and Lemma \ref{lemm1}  imply that $\Z_{(p)}[G]$ is feebly clean but not clean.

	\textbf{(3)}  Suppose $\Z_{(p)}[G]$ is weakly clean but not clean. Then by (1), we have $\Z_{(p)}[\zeta_n]$ is not clean.  If $G$ is not a cyclic group, then $G$ has at least two different cyclic subgroups of order $n$, which implies that $\lambda(n)\ge 2$. Thus by Theorem \ref{t1}.2 and Lemma \ref{lemm2}.2, we have  $\Z_{(p)}[G]$ is not weakly clean,  a contradiction. Therefore $G\cong C_n$. Since $\Phi_d(x)$ is a monic irreducible polynomial over $\Z_{(p)}$, we have $\lambda(d)=1$ for all divisors $d$ of $n$. Again by Theorem \ref{t1}.2 and Lemma \ref{lemm2}.2, we have  $\Z_{(p)}[\zeta_d]$ is clean for all divisors $d$ of $n$ with $d\neq n$. Then Proposition \ref{2.5} implies that
	$\varphi(n)=2\ord_np$ and $\varphi(d)=\ord_dp$ for every divisor $d$ of $n$ with $d\neq n$.
	The assertions follow from Proposition \ref{p3.2}.

%
	
	Suppose $G\cong C_n$ and one of (a), (b), (c), and (d) holds. It is easy to see that $\varphi(n)=2\ord_np$ and $\varphi(d)=\ord_dp$ for divisor $d$ of $n$ with $d\neq n$. It follows from Lemma \ref{lemm2}.2, Theorem \ref{t1}.2, and Proposition \ref{2.5} that  $\Z_{(p)}[G]$ is weakly clean but not clean.
\end{proof}


\medskip

By using Theorems 3.1.2 and 3.1.3.b, and \cite[Theorem 3.4]{Li-Zh20},  we obtain the following corollary which gives a summary of the main results of McGovern \cite{Mc18}.

\begin{corollary} \label{new3.2}

Let $p, q \in \N$ be distinct prime with $p> 2$. Then $\Z_{(p)}[C_q]$ is clean if and only if  $\ord_q(p)=q-1$. Moreover, the following statements are equivalent:

\begin{enumerate}[label={(\arabic*)}, font={\bfseries}]
\item $\Z_{(p)}[C_q]$ is weakly clean, but not clean.

\item $\Z_{(p)}[C_q]$ is feebly clean, but not clean.

\item  $\ord_q(p)=\frac{q-1}{2}$.

\end{enumerate}
\end{corollary}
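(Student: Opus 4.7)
The plan is to obtain each part of the corollary as a direct specialization of Theorem \ref{thm1} to the case $G = C_q$, so that $n = \exp(G) = q$ and $\varphi(n) = q-1$. For the clean statement, Theorem \ref{thm1}.1 gives $\Z_{(p)}[C_q]$ clean if and only if $p$ is a primitive root of $q$, which (since $\varphi(q) = q-1$) is exactly $\ord_q(p) = q-1$. For the equivalence (2) $\Leftrightarrow$ (3), Theorem \ref{thm1}.2 says $\Z_{(p)}[C_q]$ is feebly clean but not clean if and only if $\varphi(q) = 2\ord_q p$, i.e., $\ord_q(p) = (q-1)/2$.

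For the equivalence (1) $\Leftrightarrow$ (3), I would apply Theorem \ref{thm1}.3 with $n = q$. The hypothesis $G \cong C_n$ is automatic, and I then scan through the four listed cases (a)--(d). Since $n = q$ is a single prime, the structural conditions ``$n \in \{4, 8\}$'' (case (a)), ``$n = 4q'$'' (case (c)), and ``$n = q_1 q_2$ a product of two distinct primes'' (case (d)) cannot hold; the only applicable case is (b), which requires $q$ to be an odd prime with $\ord_q p = (q-1)/2$. (The degenerate case $q = 2$ is harmless: then $\ord_q(p) = 1 = q-1$, so $\Z_{(p)}[C_2]$ is clean by the first assertion, and condition (3), being $\ord_2(p) = 1/2$, cannot hold; thus both sides of (1) $\Leftrightarrow$ (3) and (2) $\Leftrightarrow$ (3) are vacuously false.) Chaining these equivalences yields (1) $\Leftrightarrow$ (3) $\Leftrightarrow$ (2).

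There is no genuine obstacle here, since the substantive work already lives inside Theorem \ref{thm1}; the role of the corollary is to observe that when $G$ is cyclic of prime order, the elaborate classification of the ``weakly clean but not clean'' and ``feebly clean but not clean'' conditions collapses to the single numerical criterion $\ord_q(p) = (q-1)/2$, and thereby to recover McGovern's original result \cite{Mc18} as an immediate special case.
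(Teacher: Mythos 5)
Your proposal is correct and follows essentially the same route as the paper, which simply derives the corollary by citing Theorem \ref{thm1}.2 for the feebly clean equivalence and Theorem \ref{thm1}.3(b) for the weakly clean one (together with the clean criterion). Your additional observations — that only case (b) of Theorem \ref{thm1}.3 can occur when $n=q$ is prime, and that the case $q=2$ is vacuous — are correct and merely make explicit what the paper leaves implicit.
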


We note that the above mentioned result of McGovern \cite{Mc18}, which asserts that $\Z_{(p)}[C_q]$ is weakly clean iff it is feebly clean, is no longer the case when $C_q$ is replaced by a general finite abelian group $G$. The following example shows that these three families of group rings $\Z_{(p)}[G]$  are properly contained within each other, that is,

Class of clean group rings  $\Z_{(p)}[G]$ $\not\subseteq $ Class of weakly clean group rings  $\Z_{(p)}[G]$  $\not\subseteq $  Class of feebly clean group rings.

\begin{example}
 Let $p, q\in \N$ be distinct prime with $p>2$ and $\ord_{q^2}p=q(q-1)/2$. Let $G$ be a finite abelian group of $\exp(G)=q^r$ for some $r\in \N$. It follows from Theorem \ref{thm1} that
		\begin{itemize}
			\item $\Z_{(p)}[G]$ is feebly  clean but not clean.
			
			\item $\Z_{(p)}[G]$ is weakly  clean if and only if $G$ is cyclic and $r=1$.	
		\end{itemize}
\end{example}

\medskip
Let $K$ be an algebraic number field, $\mathcal O$ its ring of integers, and $\mathfrak p\subset \mathcal O$ a nonzero prime ideal. Then there exists a prime $p$ such that $\mathfrak p\cap \Z=p\Z$ and the localization $\mathcal O_{\mathfrak p}$ is a discrete valuation ring, which implies that $\mathcal O_{\mathfrak p}[x]$ is a unique factorization domain (UFD).
Furthermore, the norm $N(\mathfrak p)=|\mathcal O/\mathfrak p|=|\overline{\mathcal O_{\mathfrak p}}|$ is a prime power of $p$. The following  lemma  will be used frequently  in sequel without further mention.\\

\begin{lemma}\label{l1}
Let $K=\Q(\zeta_m)$ be a cyclotomic field for some $m\in \N$, $\mathcal O=\Z[\zeta_m]$ its ring of integers, and $\mathfrak p\subset \mathcal O$ a nonzero prime ideal with $\mathfrak p\cap \Z=p\Z$ for some prime $p$. Suppose  $p\nmid m$. Then $ N(\mathfrak p)=p^{\ord_{m}p}$.
\end{lemma}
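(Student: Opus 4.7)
The plan is to identify the residue degree $f = [\mathcal O/\mathfrak p : \F_p]$ with $\ord_m p$ via the explicit factorization of $\Phi_m(x)$ modulo $p$. Since $\mathcal O = \Z[\zeta_m]$, the natural surjection $\Z[x] \twoheadrightarrow \mathcal O$ with kernel $(\Phi_m(x))$ gives
\[
\mathcal O / p\mathcal O \;\cong\; \Z[x]/(p, \Phi_m(x)) \;\cong\; \F_p[x]/(\overline{\Phi_m}(x)).
\]
Thus the prime ideals of $\mathcal O$ above $p$ correspond bijectively to the monic irreducible factors of $\overline{\Phi_m}(x)\in \F_p[x]$, and the residue field at each such prime is the quotient of $\F_p[x]$ by the corresponding factor.

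The next step is to show that every irreducible factor of $\overline{\Phi_m}$ has degree exactly $f := \ord_m p$. Because $p \nmid m$, the polynomial $x^m - 1$ is separable over $\F_p$, so $\overline{\Phi_m}(x)$ has no repeated roots and its roots in $\overline{\F_p}$ are precisely the primitive $m$-th roots of unity. For such a root $\overline{\zeta_m}$, the field $\F_p(\overline{\zeta_m})$ is the smallest finite field $\F_{p^k}$ containing an element of multiplicative order $m$, which happens iff $m \mid p^k - 1$, iff $k \geq \ord_m p$. Hence $[\F_p(\overline{\zeta_m}):\F_p] = \ord_m p$, which equals the degree of the minimal polynomial of $\overline{\zeta_m}$ over $\F_p$, i.e., the degree of the irreducible factor of $\overline{\Phi_m}$ having $\overline{\zeta_m}$ as a root. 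Since every primitive $m$-th root of unity is a Galois conjugate of any other over $\F_p$ only within its orbit, but all irreducible factors of $\overline{\Phi_m}$ correspond to Frobenius orbits of primitive $m$-th roots of unity, and every such orbit has size $\ord_m p$, the conclusion is that every irreducible factor has degree $\ord_m p$.

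Combining the two steps, by the Chinese remainder theorem,
\[
\mathcal O/p\mathcal O \;\cong\; \prod_{i} \F_p[x]/(g_i(x)) \;\cong\; \prod_i \F_{p^{\ord_m p}},
\]
where the $g_i$ are the distinct irreducible factors of $\overline{\Phi_m}$. For the prime $\mathfrak p$ with $\mathfrak p \cap \Z = p\Z$, the quotient $\mathcal O/\mathfrak p$ is one of these factors, hence $\mathcal O/\mathfrak p \cong \F_{p^{\ord_m p}}$, and therefore $N(\mathfrak p) = |\mathcal O/\mathfrak p| = p^{\ord_m p}$, as required.

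There is no substantial obstacle: the argument is essentially Dedekind's factorization theorem for $\Z[\zeta_m]$ in the unramified case, combined with the standard description of cyclotomic extensions of $\F_p$. The only points requiring care are (i) invoking $\mathcal O = \Z[\zeta_m]$ to justify passing from $\Z[x]/(\Phi_m)$ to $\mathcal O$, and (ii) the separability of $x^m - 1$ modulo $p$, which is precisely guaranteed by the hypothesis $p \nmid m$.
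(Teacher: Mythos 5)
Your proof is correct, and it is genuinely more self-contained than the paper's: the paper disposes of this lemma with a one-line citation to Fr\"ohlich--Taylor (VI.1.12 and VI.1.15), i.e.\ it simply quotes the standard description of how an unramified prime splits in a cyclotomic field, whereas you actually prove that fact. Your route is the standard one: the Dedekind--Kummer identification $\mathcal O/p\mathcal O\cong \F_p[x]/(\overline{\Phi_m}(x))$ (legitimate here precisely because $\mathcal O=\Z[\zeta_m]$), separability of $x^m-1$ modulo $p$ from $p\nmid m$, and the observation that each irreducible factor of $\overline{\Phi_m}$ is the minimal polynomial of a Frobenius orbit of a primitive $m$-th root of unity, every such orbit having size $\ord_m p$ since $\alpha^{p^k}=\alpha$ iff $p^k\equiv 1\pmod m$ for $\alpha$ of exact order $m$. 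This buys a proof readable without the reference, at the cost of a few lines. One small imprecision: you write that $m\mid p^k-1$ iff $k\ge \ord_m p$; the correct equivalence is $m\mid p^k-1$ iff $\ord_m p\mid k$. This does not affect your conclusion, since the \emph{smallest} such $k$ is $\ord_m p$ either way, and that minimum is all you use.
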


\begin{proof}
This follows from \cite[VI.1.12 and VI.1.15]{Fr-Ta92}.
\end{proof}

We now establish an explicit characterization for the group ring $\mathcal O_{\mathfrak p}[G]$ to be clean, weakly clean or feebly clean, when $K=\Q(\zeta_m)$ is a cyclotomic field extension of $\Q$.\\

\begin{theorem}\label{main1}
Let $K=\Q(\zeta_m)$ be a cyclotomic field for some $m\in \N$, $\mathcal O=\Z[\zeta_m]$ its ring of integers, $\mathfrak p\subset \mathcal O$ a nonzero prime ideal,
and $p\neq 2$  the prime with $p\Z=\mathfrak p\cap \Z$. Let  $G$ be a finite abelian group with $n=\exp(G)$ and $p\nmid n$. Suppose $n_1$ is the maximal divisor of $n$ such that $\gcd(m, n_1)=1$ and $n'=\frac{\lcm(m,n)}{mn_1}$.

\begin{enumerate}[label={(\arabic*)}, font={\bfseries}]
	\item  The group ring $\mathcal O_{\mathfrak p}[G]$ is  clean if and only if $\mathcal O_{\mathfrak p}[\zeta_n]$ is clean
	if and only if $$n'\varphi(n_1)\ord_mp=\lcm(\ord_{n'm}p, \ord_{n_1}p)\,.$$

	\item The group ring $\mathcal O_{\mathfrak p}[G]$ is feebly clean but not clean
	if and only if $\mathcal O_{\mathfrak p}[\zeta_n]$ is feebly clean but not clean if and only if $n'\varphi(n_1)\ord_mp=2\lcm(\ord_{n'm}p, \ord_{n_1}p)$ if and only if  one of the following holds
	\begin{enumerate}
		\item $\ord_{n_1}p=\varphi(n_1)$, $\ord_{n'm}p=n'\ord_{m}p$, and $\gcd(\ord_{n_1}p,\ord_{n'm}p)=2$.

		\item $\ord_{n_1}p=\varphi(n_1)/2$, $\ord_{n'm}p=n'\ord_{m}p$, and $\gcd(\ord_{n_1}p,\ord_{n'm}p)=1$.
		
		\item $\ord_{n_1}p=\varphi(n_1)$, $\ord_{n'm}p=(n'\ord_{m}p)/2$, and $\gcd(\ord_{n_1}p,\ord_{n'm}p)=1$ (note that in this case $n'$ must be even).
	\end{enumerate}

\item  The group ring $\mathcal O_{\mathfrak p}[G]$ is weakly clean but not clean
if and only if $ G\cong C_n$, $\Phi_n(x)$ is irreducible over $\mathcal O_{\mathsf p}$, and one of the following holds
\begin{enumerate}
	\item $\ord_{n_1}p=\varphi(n_1)$, $\ord_{n'm}p=n'\ord_{m}p$, $\gcd(\ord_{n_1}p,\ord_{n'm}p)=2$, and $\gcd(\ord_{d_1}p,\ord_{d'm}p)=1$ for every divisor $d$ of $n$.

	\item $\ord_{n_1}p=\varphi(n_1)/2$, $\ord_{n'm}p=n'\ord_{m}p$,  $\gcd(\ord_{n_1}p,\ord_{n'm}p)=1$, and $\ord_{d_1}p=\varphi(d_1)$ for divisor $d$ of $n$.
	
	\item $\ord_{n_1}p=\varphi(n_1)$, $n'=2$, $\ord_{2m}p=\ord_{m}p$, and $\gcd(\ord_{n_1}p,\ord_{m}p)=1$.
\end{enumerate}

\end{enumerate}
\end{theorem}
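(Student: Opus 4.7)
\medskip
By Theorem \ref{t1}.2 and the fact that $\mathcal O_{\mathfrak p}$ is a DVR (in particular a local, integrally closed domain),
\[
\mathcal O_{\mathfrak p}[G]\cong \bigoplus_{d\mid n} \lambda(d)\,\mathcal O_{\mathfrak p}[x]/(\phi_d(x)),
\]
with each $\phi_d(x)$ a monic irreducible divisor of $\Phi_d(x)$ of degree $[K(\zeta_d):K]=\varphi(\lcm(m,d))/\varphi(m)$. Writing $d=d_1d_2$ with $\gcd(d_1,m)=1$ and every prime divisor of $d_2$ lying in $m$, and setting $d'=\lcm(m,d_2)/m$ (so $d'\mid n'$ and $\varphi(md')=\varphi(m)d'$), I obtain
\[
\deg(\phi_d)=\varphi(d_1)d',\qquad \ord_d N(\mathfrak p)=\frac{\lcm(\ord_{md'}p,\ord_{d_1}p)}{\ord_m p},
\]
where the second formula comes from Lemma \ref{l1} ($N(\mathfrak p)=p^{\ord_m p}$) together with $\ord_d p=\lcm(\ord_{d_1}p,\ord_{d_2}p)$ and $\lcm(\ord_m p,\ord_{d_2}p)=\ord_{md'}p$. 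Let $g(d):=\deg(\phi_d)/\ord_d N(\mathfrak p)$; by the proof of Proposition \ref{2.5} this counts the maximal ideals of $\mathcal O_{\mathfrak p}[\zeta_d]$.

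\medskip
The whole argument rests on the monotonicity $g(d)\le g(n)$ for every $d\mid n$, which I would deduce either from the tower $K\subset K(\zeta_d)\subset K(\zeta_n)$ (each prime of $\mathcal O_{K(\zeta_d)}$ above $\mathfrak p$ lies under at least one prime of $\mathcal O_{K(\zeta_n)}$) or directly from the formulas above via $\ord_{d_1}p\mid \ord_{n_1}p$, $\ord_{md'}p\mid \ord_{mn'}p$, $d'\mid n'$. Combined with Lemma \ref{lemm2} and Proposition \ref{2.5} (using $2\in U(\mathcal O_{\mathfrak p})$, since $p\neq 2$), monotonicity reduces cleanness (resp.\ feebly cleanness) of $\mathcal O_{\mathfrak p}[G]$ to $g(n)=1$ (resp.\ $g(n)\le 2$). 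Substituting $d=n$ yields the explicit equation of \textbf{(1)}, and the relation $g(n)=2$ for feebly-clean-but-not-clean in \textbf{(2)}.

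\medskip
To translate $g(n)=2$ into the trichotomy (a)--(c) of \textbf{(2)}, rewrite $\lcm(\ord_{n_1}p,\ord_{mn'}p)=\varphi(n_1)n'\ord_m p/2$ and use the bounds $\ord_{n_1}p\mid \varphi(n_1)$ and $\ord_{mn'}p\mid n'\ord_m p$ (the latter standard since the primes of $n'$ already divide $m$); the only ways to reach exactly half of the product bound are the three listed. For \textbf{(3)}, Lemma \ref{lemm2}.2 forces every summand to be weakly clean and at most one copy (across all $d$) to be non-clean; monotonicity puts this non-clean copy at $d=n$, giving $g(n)=2$, $g(d)=1$ for every $d<n$, and $\lambda(n)=\mu(n)\nu(n)=1$. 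The conditions $\mu(n)=1$ and $\nu(n)=1$ are equivalent, respectively, to $G\cong C_n$ and to $\Phi_n(x)$ being irreducible over $\mathcal O_{\mathfrak p}$, and they propagate to every $d\mid n$ (using $\gcd(m,d)\mid \gcd(m,n)$ and the unique-subgroup property of cyclic groups), so the decomposition collapses to $\bigoplus_{d\mid n}\mathcal O_{\mathfrak p}[\zeta_d]$.

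\medskip
The remaining work---and the main obstacle---is to combine each of the trichotomy cases (a), (b), (c) at $d=n$ with the additional cleanness constraint $g(d)=1$ at every proper $d<n$, and to show that this is captured precisely by the three displayed conditions in \textbf{(3)}. The key auxiliary facts needed are the classical ``primitive-root descent'' $\ord_{q^s}p=\varphi(q^s)\Rightarrow \ord_{q^t}p=\varphi(q^t)$ for $t\le s$ (so a primitive-root hypothesis on $n_1$ descends to every $d_1\mid n_1$), its half-order analogue, and the analogous ``multiplication by $q$'' behaviour for $\ord_{md'}p$ as $d'\mid n'$, all of which are already catalogued in Propositions \ref{p2.6} and \ref{p3.2}. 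Conversely, starting from the three displayed conditions one verifies by direct substitution into the formulas above that $g(n)=2$ and $g(d)=1$ for every $d<n$, completing the characterization.
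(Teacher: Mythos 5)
Your proposal is correct and follows the same skeleton as the paper's proof: the decomposition of $\mathcal O_{\mathfrak p}[G]$ via Theorem \ref{t1}.2, the identities $\deg(\phi_d)=d'\varphi(d_1)$ and $\ord_d N(\mathfrak p)=\lcm(\ord_{d'm}p,\ord_{d_1}p)/\ord_m p$ obtained from Lemma \ref{l1}, and the translation into (feebly/weakly) cleanness through Proposition \ref{2.5} and Lemma \ref{lemm2}. The one genuine methodological difference is your monotonicity lemma $g(d)\le g(n)$ for $d\mid n$, proved via the tower $K\subseteq K(\zeta_d)\subseteq K(\zeta_n)$ (the restriction map on primes above $\mathfrak p$ is surjective, and since the extensions are unramified at $\mathfrak p$ one has $g_M/g_L=[M:L]/f_{M/L}\ge 1$). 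The paper instead verifies by hand, separately in parts (1) and (2) and separately for each of the cases (a)--(c), that the equality at $d=n$ forces the required inequality at every divisor $d$; your single observation replaces all of that bookkeeping and is the cleaner route (your alternative derivation of monotonicity ``directly from the formulas'' is less convincing than the tower argument, since the three divisibilities you list do not by themselves compare the two ratios, but the tower argument suffices). The derivation of the trichotomy (a)--(c) from $g(n)=2$ and the reduction of part (3) to ``$\lambda(n)=1$, $g(n)=2$, $g(d)=1$ for proper $d$'' match the paper exactly. Your part (3) stops short of executing the final case analysis (combining each trichotomy case with cleanness at every proper divisor), but you correctly identify that the displayed conditions (a)--(c) of (3) are precisely the conditions of (2) at $n$ augmented by the cleanness equation at proper divisors, and that primitive-root descent for $\ord_{d_1}p$ and the divisibility $\ord_{d'm}p\mid\ord_{n'm}p$ make the verification routine; the paper is no more explicit at this point, so this is not a substantive gap.
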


\medskip
\begin{proof}
Since $p\neq 2$ and $p\nmid n$, we have $\{2,d\}\subset  U(\mathcal O_{\mathfrak p})$ for all divisors $d$ of $\exp(G)$. Let $d$ be a divisor of $n$, let $d_1$ be the maximal divisor of $d$ such that $\gcd(d_1,m)=1$, and let $d'=\frac{\lcm(m,d)}{md_1}$.

Note that $$[\Q(\zeta_{m})(\zeta_{d}):\Q(\zeta_{m})]=[\Q(\zeta_{\lcm(m,d)}:\Q(\zeta_{m})]=\frac{\varphi(\lcm(m,d))}{\varphi(m)}=d'\varphi(d_1)$$ and
\begin{align*}
&\ord_{d} N(\mathfrak p)=\ord_{d}p^{\ord_{m}p}=\frac{\ord_{d}p}{\gcd(\ord_{d}p, \ord_{m}p)}\\
=&\frac{\lcm(\ord_{m}p, \ord_{d}p)}{\ord_{m}p}=\frac{\ord_{\lcm(d,m)}p}{\ord_{m}p}
=\frac{\lcm(\ord_{d'm}p, \ord_{d_1}p)}{\ord_{m}p}\,.
\end{align*}

\smallskip	
\textbf{(1)} Suppose $\mathcal O_{\mathsf p}[G]$ is clean. Then Lemma \ref{lemm1} and Theorem \ref{t1} imply that $\mathcal O_{\mathsf p}[\zeta_n]$ is clean.

Suppose $\mathcal O_{\mathsf p}[\zeta_n]$ is clean. Then Proposition \ref{2.5} implies that $[\Q(\zeta_m)(\zeta_n):\Q(\zeta_m)]=\deg(\phi_n(x))=\ord_n N(\mathsf p)$. Therefore $n'\varphi(n_1)\ord_mp=\lcm(\ord_{n'm}p, \ord_{n_1}p)$.

Suppose $n'\varphi(n_1)\ord_mp=\lcm(\ord_{n'm}p, \ord_{n_1}p)$. Then
 $$n'\varphi(n_1)\ord_mp=\frac{\ord_{n'm}p\  \ord_{n_1}p}{\gcd(\ord_{n'm}p, \ord_{n_1}p)}\le \frac{n'\ord_{m}p\ \varphi(n_1)}{\gcd(\ord_{n'm}p, \ord_{n_1}p)}\,,$$
whence \[\gcd(\ord_{n'm}p, \ord_{n_1}p)=1\ \text{ and }\ \ord_{n'm}p\,\ord_{n_1}p=n'\ord_{m}p\ \varphi(n_1)\,.\]
 Therefore for every divisor $d$ of $n$, we have \[
 \gcd(\ord_{d'm}p, \ord_{d_1}p)=1\ \text{ and }\ \ord_{d'm}p\,\ord_{d_1}p=d'\ord_{m}p\ \varphi(d_1)\,.
 \] It follows that $d'\varphi(d_1)\ord_mp=\lcm(\ord_{d'm}p, \ord_{d_1}p)$ for all divisors $d$ of $n$, whence $$[\Q(\zeta_m)(\zeta_d):\Q(\zeta_m)]=\deg(\phi_d(x))=\ord_d N(\mathsf p)$$ for all divisors $d$ of $n$. By Proposition \ref{2.5}, we have $\mathcal O_{\mathsf p}[G]$ is clean.

\smallskip		
\textbf{(2)} Note that $2\in U(\mathcal O_{\mathsf p})$.
By Proposition \ref{2.5} we have  $\mathcal O_{\mathfrak p}[G]$ is feebly clean but not clean if and only if
\[\tag{*}
\begin{aligned}
&d'\varphi(d_1)\ord_mp\le 2\lcm(\ord_{d'm}p, \ord_{d_1}p) \text{ for all divisors $d$ of $n$}\\
\text{ and }\quad & n'\varphi(n_1)\ord_mp=2\lcm(\ord_{n'm}p, \ord_{n_1}p)\,.
\end{aligned}
\]

Suppose that $\mathcal O_{\mathfrak p}[G]$ is feebly clean but not clean. Then by (1), $\mathcal O_{\mathsf p}[\zeta_n]$ is not clean and by Lemma \ref{lemm1} and Theorem \ref{t1}, $\mathcal O_{\mathsf p}[\zeta_n]$ is feebly clean.

Suppose that $\mathcal O_{\mathsf p}[\zeta_n]$ is feebly clean but not clean. By Proposition \ref{2.5}.2, we have $[\Q(\zeta_m)(\zeta_n):\Q(\zeta_m)]=\deg(\phi_n(x))=2\ord_n N(\mathsf p)$ and whence $n'\varphi(n_1)\ord_mp=2\lcm(\ord_{n'm}p, \ord_{n_1}p)$.

Suppose $n'\varphi(n_1)\ord_mp=2\lcm(\ord_{n'm}p, \ord_{n_1}p)$.  Since \begin{align*}
n'\varphi(n_1)= 2\frac{\lcm(\ord_{n'm}p, \ord_{n_1}p)}{\ord_{m}p}&=\frac{2}{\gcd(\ord_{n'm}p, \ord_{n_1}p)}\cdot\frac{\ord_{n'm}p\,\ord_{n_1}p}{\ord_{m}p}\\
&\le \frac{2}{\gcd(\ord_{n'm}p, \ord_{n_1}p)}\cdot\frac{n'\ord_{m}p\,\ord_{n_1}p}{\ord_{m}p}\\
&\le \frac{2}{\gcd(\ord_{n'm}p, \ord_{n_1}p)}\cdot n'\varphi(n_1)\,,
\end{align*}
we obtain that one of the following holds
\begin{enumerate}
 \item[(a)] $\ord_{n_1}p=\varphi(n_1)$, $\ord_{n'm}p=n'\ord_{m}p$, and $\gcd(\ord_{n_1}p,\ord_{n'm}p)=2$.

 \item[(b)] $\ord_{n_1}p=\varphi(n_1)/2$, $\ord_{n'm}p=n'\ord_{m}p$, and $\gcd(\ord_{n_1}p,\ord_{n'm}p)=1$.

\item[(c)] $\ord_{n_1}p=\varphi(n_1)$, $\ord_{n'm}p=(n'\ord_{m}p)/2$, and $\gcd(\ord_{n_1}p,\ord_{n'm}p)=1$ (this can  not happen if $2\nmid n'$).
 \end{enumerate}

Suppose (a), or (b), or (c) holds.
If (a) holds, then  $\ord_{d_1}p=\varphi(d_1)$, $\ord_{d'm}p=d'\ord_mp$, and  $\gcd(\ord_{d_1}p,\ord_{d'm}p)\le 2$, whence $$2\lcm(\ord_{d'm}p, \ord_{d_1}p)=\frac{2\ord_{d'm}p\,\ord_{d_1}p}{\gcd(\ord_{d'm}p, \ord_{d_1}p)}\ge d'\ord_mp \ \varphi(d_1)\,.$$
If (b) holds, then $\ord_{d_1}p\ge \varphi(d_1)/2$, $\ord_{d'm}p=d'\ord_mp$, and  $\gcd(\ord_{d_1}p,\ord_{d'm}p)=1$, whence $$2\lcm(\ord_{d'm}p, \ord_{d_1}p)=\frac{2\ord_{d'm}p\,\ord_{d_1}p}{\gcd(\ord_{d'm}p, \ord_{d_1}p)}\ge  d'\ord_mp \ \varphi(d_1)\,.$$
If (c) holds, then $\ord_{d_1}p=\varphi(d_1)$, $\ord_{d'm}p= (d'\ord_mp)/2$, and  $\gcd(\ord_{d_1}p,\ord_{d'm}p)=1$, whence $$2\lcm(\ord_{d'm}p, \ord_{d_1}p)=\frac{2\ord_{d'm}p\,\ord_{d_1}p}{\gcd(\ord_{d'm}p, \ord_{d_1}p)}\ge  d'\ord_mp \ \varphi(d_1)\,.$$
All those three cases imply that $(*)$ holds and hence $\mathcal O_{\mathsf p}[G]$ is clean.

\medskip	
\textbf{(3)} Suppose $\mathcal O_{\mathsf p}[G]$ is weakly clean but not clean. Then by (1) we have $\mathcal O_{\mathsf p}[\zeta_n]$ is not clean. Assume to the contrary that $G$ is not cyclic or $\Phi_n(x)$ is not irreducible over $\mathcal O_{\mathsf p}$. Then $\lambda(n)\ge 2$. In view of Theorem \ref{t1} and Lemma \ref{lemm1}, we have  $\mathcal O_{\mathsf p}[G]$ is not weakly clean, a contradiction. Thus $G\cong C_n$ and $\Phi_n(x)$ is  irreducible over $\mathcal O_{\mathsf p}$. It  follows from Theorem \ref{t1} and Lemma \ref{lemm1} that $\mathcal O_{\mathsf p}[\zeta_d]$ is clean for all divisors $d$ of $n$ with $d\neq n$. The assertion follows from (1) and (2).

Suppose $G\cong C_n$, $\Phi_n(x)$ is  irreducible over $\mathcal O_{\mathsf p}$ and one of (a), (b), (c) holds.
Then $\lambda(n)=1$ and (2) implies that $\mathcal O_{\mathsf p}[G]$ is feebly clean but not clean. Since any of (a), (b), (c) implies that $d'\varphi(d_1)\ord_mp=\lcm(\ord_{d'm}p,\ord_{d_1}p)$ for all divisors $d$ of $n$ with $d\neq n$, we have $\mathcal O_{\mathsf p}[\zeta_d]$ is clean for all divisors $d$ of $n$ with $d\neq n$. It follows from Theorem \ref{t1} and Lemma \ref{lemm1} that $\mathcal O_{\mathsf p}[G]$ is weakly clean.
\end{proof}

\medskip
Next we investigate when a group ring over a local subring of a quadratic field is clean, weakly clean and feebly clean, and our main result is stated in Theorem \ref{main2} below. Let $d$ be a non-zero square-free integer with $d\neq 1$,  $K=\Q(\sqrt{d})$ a quadratic number field,
\begin{align*}
\omega=\left\{\begin{aligned}
&\sqrt{d}      &&\text{ if }d\equiv 2,3\pmod 4\,,\\
&\frac{1+\sqrt{d}}{2} &&\text{ if }d\equiv 1\pmod 4\,,
\end{aligned}
\right.
\text{ and }
\Delta=\left\{\begin{aligned}
&4d      &&\text{ if }d\equiv 2,3\pmod 4\,,\\
&d &&\text{ if }d\equiv 1\pmod 4\,.
\end{aligned}
\right.
\end{align*}
Then $\mathcal O_K=\Z[\omega]$ is the ring of integers  and $\Delta$ is the discriminant of $K$.

 For an odd prime $p$ and an integer $a$, we denote by $\left(\frac{a}{p}\right)\in \{-1,0,1\}$ the Legendre symbol of $a$ modulo $p$.

We first provide two useful lemmas.

\begin{lemma}\label{3.4}
Let $d\neq 1$ be a non-zero square-free integer and let $\Delta$ be the discriminant of $\Q(\sqrt{d})$.
Then $\Q(\sqrt{d})\subset \Q(\zeta_n)$ if and only if $n$ is a multiple of $\Delta$.
\end{lemma}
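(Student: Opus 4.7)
My plan is to prove both directions separately, combining Gauss-sum constructions with ramification arguments.

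For the sufficiency direction, it suffices to exhibit an inclusion $\Q(\sqrt d)\subset \Q(\zeta_{|\Delta|})$, since $\Q(\zeta_{|\Delta|})\subset \Q(\zeta_n)$ whenever $|\Delta|\mid n$ by the identity $\Q(\zeta_a)(\zeta_b)=\Q(\zeta_{\lcm(a,b)})$ recorded in Section~2. I would build $\sqrt d$ out of three standard ingredients: the quadratic Gauss-sum identity $g_p^2 = p^*$ with $p^* = (-1)^{(p-1)/2}p$, which yields $\sqrt{p^*}\in \Q(\zeta_p)$ for every odd prime $p$; the identity $\sqrt{-1}=\zeta_4\in \Q(\zeta_4)$; and $\sqrt{2}=\zeta_8+\zeta_8^{-1}\in \Q(\zeta_8)$ (hence also $\sqrt{-2}\in\Q(\zeta_8)$). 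Writing $d=\pm\prod_i p_i$ as its square-free factorization and splitting on the residue of $d$ modulo $4$, one assembles $\sqrt d$ as a product of these radicals inside $\Q(\zeta_{|\Delta|})$: the case $d\equiv 1\pmod 4$ needs only the $\sqrt{p^*}$ factors (the sign is absorbed into the choice of each $p^*$), while $d\equiv 2,3\pmod 4$ additionally require $\sqrt 2$ or $\sqrt{-1}$, consistent with the jump to $\Delta=4d$.

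For the necessity direction, I would use ramification theory. The primes ramifying in $\Q(\sqrt d)/\Q$ are exactly those dividing $|\Delta|$, while the primes ramifying in $\Q(\zeta_n)/\Q$ are the odd primes dividing $n$ together with $2$ when $4\mid n$. Since $\Q(\sqrt d)\subset\Q(\zeta_n)$ forces every prime ramifying in $\Q(\sqrt d)/\Q$ to also ramify in $\Q(\zeta_n)/\Q$, every odd prime dividing $|\Delta|$ must divide $n$, and, when $\Delta$ is even, $4$ must divide $n$. This immediately gives $|\Delta|\mid n$ when $d\equiv 1\pmod 4$ (where $\Delta=d$ is odd) and when $d\equiv 3\pmod 4$ (where $|\Delta|=4|d|$ with $|d|$ odd).

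The main obstacle is the case $d\equiv 2\pmod 4$, where $|\Delta|=4|d|$ is divisible by $8$ but the ramification step at $2$ only yields $4\mid n$. To upgrade to $8\mid n$, I would combine the sufficiency direction with the intersection formula $\Q(\zeta_n)\cap\Q(\zeta_m)=\Q(\zeta_{\gcd(n,m)})$ recorded in the preliminaries, giving $\Q(\sqrt d)\subset \Q(\zeta_{\gcd(n,|\Delta|)})$ and reducing the problem to a divisor of $|\Delta|$. An equivalent direct argument is to observe that if $\nu_2(n)\le 2$, then the $2$-part of $\Q(\zeta_n)$ contributes at most $\Q(i)$, so every quadratic subfield of $\Q(\zeta_n)$ has the shape $\Q(\sqrt a)$ where $a$ is (up to sign) a product of distinct $p^*$ over odd primes $p\mid n$; such an $a$ is never $\equiv 2\pmod 4$, forcing $\nu_2(n)\ge 3$, and combining with the odd-prime conditions yields $|\Delta|\mid n$.
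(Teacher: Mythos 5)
Your argument is correct, but it is worth noting that the paper does not actually prove this lemma: its entire ``proof'' is a citation to \cite[Corollary 4.5.5]{We06}, which records precisely the classical fact that $\Q(\sqrt d)\subset\Q(\zeta_n)$ iff the discriminant divides $n$ (a special case of conductor--discriminant). What you have written is a self-contained derivation of that cited result. Your sufficiency direction via $g_p^2=p^*$, $\zeta_4=\sqrt{-1}$ and $\zeta_8+\zeta_8^{-1}=\sqrt2$, with the sign bookkeeping $d=\prod p_i^*$ when $d\equiv1\pmod4$ and $d=-\prod p_i^*$ when $d\equiv3\pmod4$, is the standard construction and is complete. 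Your necessity direction via ramification is also sound, and you correctly identify that the only delicate point is upgrading $4\mid n$ to $8\mid n$ when $d\equiv2\pmod4$; the ``direct'' argument you give (for $\nu_2(n)\le2$ the quadratic subfields of $\Q(\zeta_n)$ are exactly the $\Q(\sqrt a)$ with $a$ a square class generated by $-1$ and the $p^*$, all of which are odd, so no even square-free $d$ occurs) closes that gap completely. One small presentational point: the intersection-formula reduction by itself only shrinks $n$ to a divisor of $|\Delta|$ and still needs the exclusion of $\Q(\sqrt d)\subset\Q(\zeta_{|\Delta|/2})$, which is exactly what your second argument supplies, so the two are complementary rather than ``equivalent.'' The trade-off between your route and the paper's is simply self-containedness versus brevity: your proof makes the lemma independent of \cite{We06} at the cost of a page of classical Gauss-sum and ramification material.
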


\begin{proof}
This follows from  \cite[Corollary 4.5.5]{We06}
\end{proof}

\begin{lemma}\label{3.5}
Let $d\neq 1$ be a non-zero square-free integer and let $I$ be a prime ideal of $\mathcal O_K$, where $K=\Q(\sqrt{d})$.
Suppose $\Delta$ is the discriminant of $K$ and $\ch \mathcal O_K/I=p$, where $p$ is an odd prime.
Then $N(I)=p$ if and only if $\big(\frac{\Delta}{p}\big)=1$ or $0$.
\end{lemma}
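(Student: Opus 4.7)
The proof rests on the Dedekind--Kummer theorem, applicable because $\mathcal{O}_K = \Z[\omega]$ is monogenic over $\Z$. The plan is to take the minimal polynomial $f(x) \in \Z[x]$ of $\omega$, reduce it modulo $p$, and use the standard discriminant test to decide when $\overline{f}$ has a root in $\F_p$.

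By Dedekind--Kummer, if $\overline{f} = \prod \overline{g_i}^{\,e_i} \in \F_p[x]$ is the factorization of $\overline{f}$ into distinct monic irreducibles, then $p\mathcal{O}_K = \prod \mathfrak{p}_i^{e_i}$ with $\mathfrak{p}_i = (p, g_i(\omega))$ and $N(\mathfrak{p}_i) = p^{\deg g_i}$. Since $\ch(\mathcal{O}_K/I) = p$, the prime $I$ lies over $p\Z$ and hence equals one of the $\mathfrak{p}_i$. As $\deg f = 2$, there are only two possibilities: either $\overline{f}$ has a root in $\F_p$, in which case it splits into two (not necessarily distinct) linear factors and $N(I) = p$; or $\overline{f}$ is irreducible over $\F_p$, in which case $p\mathcal{O}_K$ is itself prime and $N(I) = p^2$. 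Because $p$ is odd, a monic quadratic $x^2 + bx + c \in \F_p[x]$ has a root in $\F_p$ if and only if its discriminant $b^2 - 4c$ is a square (including $0$) in $\F_p$, i.e., $\bigl(\frac{b^2-4c}{p}\bigr) \in \{0,1\}$.

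Finally I would compute the discriminant of $f$ in each case. If $d \equiv 2, 3 \pmod 4$, then $\omega = \sqrt{d}$, $f(x) = x^2 - d$, and the discriminant is $4d = \Delta$. If $d \equiv 1 \pmod 4$, then $\omega = (1+\sqrt{d})/2$, $f(x) = x^2 - x + (1-d)/4$ (with $(1-d)/4 \in \Z$), and the discriminant is $1 - (1-d) = d = \Delta$. In both cases the discriminant of $f$ equals $\Delta$, and since $p$ is odd the factor of $4$ does not affect the Legendre symbol. Combining, $N(I) = p$ if and only if $\bigl(\frac{\Delta}{p}\bigr) \in \{0,1\}$, which is the claim. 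The only mild obstacle is verifying the hypotheses of Dedekind--Kummer, but this is automatic from the monogenicity of $\mathcal{O}_K$; alternatively, one may simply appeal to the classical splitting law for odd primes in quadratic fields.
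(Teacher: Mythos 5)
Your proof is correct: the reduction via Dedekind--Kummer (valid since $\mathcal O_K=\Z[\omega]$ is monogenic), the computation that the minimal polynomial of $\omega$ has discriminant $4d=\Delta$ when $d\equiv 2,3\pmod 4$ and $d=\Delta$ when $d\equiv 1\pmod 4$, and the observation that for odd $p$ a monic quadratic over $\F_p$ has a root exactly when its discriminant is a square, together give precisely the classical splitting law $N(I)=p\iff\big(\frac{\Delta}{p}\big)\in\{0,1\}$. The paper proves this lemma only by citing Fr\"ohlich--Taylor for that same splitting law, so your argument is a correct, self-contained write-up of essentially the same approach.
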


\begin{proof}
This follows from \cite[Theorem 22, III.2.1, and V.1.1]{Fr-Ta92}.
\end{proof}

\begin{theorem}\label{main2}
Let $K=\Q(\sqrt{d})$ be a quadratic field for some non-zero square-free integer $d\neq 1$, $\mathcal O$ its ring of integers, $\mathfrak p\subset \mathcal O$ a nonzero prime ideal with $ \mathfrak p\cap \Z=p\Z$ and $p\neq 2$, and $G$ a  finite nontrivial abelian group with $p\nmid \exp(G)$.  Let $\Delta$ be the  discriminant of $K$.
 \begin{enumerate}[label={(\arabic*)}, font={\bfseries}]
 \item  Suppose $\Delta\nmid \exp(G)$.
 \begin{enumerate}
 	\item $\mathcal O_{\mathfrak p}[G]$ is  clean  if and only if $\mathcal O_{\mathsf p}[\zeta_{\exp(G)}]$ is clean if and only if one of the following holds
 	\begin{enumerate}
 		\item $\varphi(\exp(G))=\ord_{\exp(G)}p$ and $\big(\frac{\Delta}{p}\big)=1$ or $0$.

 		\item $\exp(G)=2$ and $\big(\frac{\Delta}{p}\big)=-1$.
 	\end{enumerate} 
 	
 	\item $\mathcal O_{\mathfrak p}[G]$ is feebly clean but not clean  if and only if $\mathcal O_{\mathsf p}[\zeta_{\exp(G)}]$ is feebly clean but not clean if and only if one of the following holds  \begin{enumerate}
 		\item $\varphi(\exp(G))=2\ord_{\exp(G)}p$ and $\big(\frac{\Delta}{p}\big)=1$ or $0$;

 		\item $\varphi(\exp(G))/2=\ord_{\exp(G)}p$ is odd and $\big(\frac{\Delta}{p}\big)=-1$;

 		\item $\exp(G)\neq 2$,  $\varphi(\exp(G))=\ord_{\exp(G)}p$, and $\big(\frac{\Delta}{p}\big)=-1$:
 	\end{enumerate} 

 \item $\mathcal O_{\mathfrak p}[G]$ is weekly clean but not clean  if and only if $G\cong C_{\exp(G)}$, $\Phi_{\exp(G)}(x)$ is irreducible over $\mathcal O_{\mathfrak p}$, and either  $\exp(G)=4$ and $\big(\frac{\Delta}{p}\big)=-1$, or  $\big(\frac{\Delta}{p}\big)\neq -1$ and one of the following holds  \begin{enumerate}
 	\item $\exp(G)=4$ and $p\equiv 1\pmod 4$; or $\exp(G)=8$ and $p\equiv 3\pmod 4$.

 	\item $\exp(G)=q$ is an odd prime  and  $\ord_{q}p=(q-1)/2$.

 	\item $\exp(G)=4q$, where $q$ is an odd prime,  and    $p$ is a primitive root of $q$.

 	\item $\exp(G)=q_1q_2$ is a product of  two distinct primes, $p$ is a primitive root of unity of both $q_1$ and $q_2$, and $\gcd(q_1-1,q_2-1)=2$.
 \end{enumerate} 
 \end{enumerate}

\item  Suppose  $\Delta\mid \exp(G)$ and $\big(\frac{\Delta}{p}\big)=1$ or $0$. Then

\begin{enumerate}
	\item $\mathcal O_{\mathfrak p}[G]$ is  clean  if and only if $\mathcal O_{\mathsf p}[\zeta_m]$ is clean for all divisors $m$ of $n$ if and only if one of the following holds
	\begin{enumerate}
		\item $\exp(G)=|\Delta|=8$, $d=\pm 2$, and $p\equiv 3\pmod 4$.

		\item $\exp(G)=2^r$ with $r\ge 4$, $|\Delta|=8$, $d=\pm 2$, and $p\equiv 3,11\pmod {16}$.
		
		\item $\exp(G)=q^r$ or $2q^r$, $|\Delta|=q$, $d=\pm q\equiv 1\pmod 4$, and $\ord_{q^{\epsilon(r)}}p=q^{\epsilon(r)-1}(q-1)/2$.

			\item $\exp(G)=4q^r$, $|\Delta|=q$, $d=- q\equiv 1\pmod 4$, $p\equiv 3\pmod 4$ and $\ord_{q^{\epsilon(r)}}p=q^{\epsilon(r)-1}(q-1)/2$.

				\item $\exp(G)=4q^r$, $|\Delta|=4q$, $d=\pm q\equiv 3\pmod 4$, $p\equiv 3\pmod 4$, and $p$ is a primitive root of $q^{\epsilon(r)}$.

					\item $\exp(G)=q_1^{r_1}q_2^{r_2}$ or $2q_1^{r_1}q_2^{r_2}$, $|\Delta|=q_2$, $d=- q_2\equiv 1\pmod 4$, $p$ is a primitive root of $q_1^{\epsilon(r_1)}$, $\ord_{q_2^{\epsilon(r_2)}}p=q_2^{\epsilon(r_2)-1}(q_2-1)/2$, and $\gcd(q_1^{\epsilon(r_1)-1}(q_1-1), q_2^{\epsilon(r_2)-1}(q_2-1))=2$.
					
					\item $\exp(G)=q_1^{r_1}q_2^{r_2}$ or $2q_1^{r_1}q_2^{r_2}$, $|\Delta|=q_1q_2$, $d=\pm q_1q_2\equiv 1\pmod 4$, $p$ is a primitive root of $q_1^{\epsilon(r_1)}$ and $q_2^{\epsilon(r_2)}$,  and $\gcd(q_1^{\epsilon(r_1)-1}(q_1-1), q_2^{\epsilon(r_2)-1}(q_2-1))=2$

	\end{enumerate}
	\item $\mathcal O_{\mathfrak p}[G]$ is feebly clean  if and only if $\mathcal O_{\mathsf p}[\zeta_{m}]$ is feebly clean for all divisors $m$ of $\exp(G)$ if and only if
	$\ord_{\exp(G)}p=\varphi(\exp(G))/2$ or one of the following holds
	\begin{enumerate}
		\item $\exp(G)=|\Delta|=8$, $d=\pm 2$, and $p\equiv 1\pmod 8$;
		
		\item $\exp(G)=2^r$ with $r\ge 4$, $|\Delta|=8$, $d=\pm 2$, and $p\equiv 9\pmod {16}$.

		\item $\exp(G)=q^r$ or $2q^r$ with $q\equiv 1\pmod 4$, $\Delta=d=q$, and $\ord_{q^{\epsilon(r)}}p=q^{\epsilon(r)-1}(q-1)/4$.

		\item $\exp(G)=q_1^{r_1}q_2^{r_2}$ or $2q_1^{r_1}q_2^{r_2}$ and one of the following  holds.

		\begin{itemize}
			\item $p$ is a primitive root of both $q_1^{\epsilon(r_1)}, q_2^{\epsilon(r_2)}$, $\gcd(q_1^{\epsilon(r_1)-1}(q_1-1), q_2^{\epsilon(r_2)-1}(q_2-1))=4$, and $\Delta=d\in \{q_1, q_2, q_1q_2\}$.
			
			\item $p$ is a primitive root of $q_1^{\epsilon(r_1)}$, $\ord_{q_2^{\epsilon(r_2)}}p=q_2^{\epsilon(r_2)-1}(q_2-1)/2$,  $\gcd(q_1^{\epsilon(r_1)-1}(q_1-1), q_2^{\epsilon(r_2)-1}(q_2-1)/2)=2$, $|\Delta|\in \{q_1, q_2, q_1q_2\}$, and $d=\Delta\equiv 1\pmod 4$.

			\item $\ord_{q_1^{\epsilon(r_1)}}p=q_1^{\epsilon(r_1)-1}(q_1-1)/2$, $\ord_{q_2^{\epsilon(r_2)}}p=q_2^{\epsilon(r_2)-1}(q_2-1)/2$, $\gcd(q_1^{\epsilon(r_1)-1}(q_1-1)/2, q_2^{\epsilon(r_2)-1}(q_2-1)/2)=1$,  $|\Delta|\in \{q_1, q_2, q_1q_2\}$, and  $d=\Delta\equiv 1\pmod 4$.

			\item $p$ is a primitive root of $q_1^{\epsilon(r_1)}$, $\ord_{q_2^{\epsilon(r_2)}}p=q_2^{\epsilon(r_2)-1}(q_2-1)/4$,  $\gcd(q_1^{\epsilon(r_1)-1}(q_1-1), q_2^{\epsilon(r_2)-1}(q_2-1)/4)=1$, and  $d=\Delta=q_2$.
		\end{itemize}
		
%

		\item $\exp(G)=q_1^{r_1}q_2^{r_2}q_3^{r_3}$ or $2q_1^{r_1}q_2^{r_2}q_3^{r_3}$ and one of the following holds
		\begin{itemize}
			\item $p$ is a primitive root of  $q_1^{r_1}, q_2^{r_2}, q_3^{r_3}$,  $\gcd(q_i^{r_i-1}(q_i-1), q_j^{r_j-1}(q_j-1))=2$  for all distinct $i,j\in \{1,2,3\}$, $|\Delta|\in \{q_1, q_2, q_3, q_1q_2, q_1q_3, q_2q_3, q_1q_2q_3\}$, and $d=\Delta\equiv 1\pmod 4$.
			
			\item $p$ is a primitive root of $q_1^{r_1}, q_2^{r_2}$, $\ord_{q_3^{r_3}}p=q_3^{r_3-1}(q_3-1)/2$ is odd,   $\gcd(q_i^{r_i-1}(q_i-1), q_j^{r_j-1}(q_j-1))=2$ for all distinct $i,j\in \{1,2,3\}$, $|\Delta|\in \{q_1, q_2, q_3, q_1q_2, q_1q_3, q_2q_3, q_1q_2q_3\}$, and $d=\Delta\equiv 1\pmod 4$.

			\item $p$ is a primitive root of $q_1^{r_1}$, $\ord_{q_2^{r_2}}p=q_2^{r_2-1}(q_2-1)/2$ is odd, $\ord_{q_3^{r_3}}p=q_3^{r_3-1}(q_3-1)/2$ is odd,  $\gcd(q_i^{r_i-1}(q_i-1), q_j^{r_j-1}(q_j-1))=2$ for all distinct $i,j\in \{1,2,3\}$, and  $d=\Delta\in \{-q_2, -q_3, q_2q_3\}$.
		\end{itemize}

		\item $\exp(G)=4q^r$ and one of the following holds
		
		 \begin{itemize}
			\item $p\equiv 1\pmod 4$, $\ord_{q^r}p=q^{r-1}(q-1)/2$, and either $d=\Delta=\pm q\equiv 1\pmod 4$, or $d=\Delta/4=\pm q\equiv 3\pmod 4$.
			
			\item $p\equiv 3\pmod 4$, $q\equiv 1\pmod 4$,   $\ord_{q^r}p=q^{r-1}(q-1)/2$, and  either $d=\Delta=q$, or $d=\Delta/4=-q$.
			
			\item $p\equiv 3\pmod 4$, $q\equiv 5\pmod 8$,  $\ord_{q^r}p=q^{r-1}(q-1)/4$, and hence $d=\Delta=q$.
		\end{itemize}
%
%

		 \item $\exp(G)=4q_1^{r_1}q_2^{r_2}$ with $q_2\equiv 3\pmod 4$ and one of the following holds
		
		 \begin{itemize}
		 	
		 	\item  $\gcd(q_1^{\epsilon(r_1)-1}(q_1-1), q_2^{\epsilon(r_2)-1}(q_2-1))=2$,  $p$ is a primitive root of both $q_1^{\epsilon(r_1)},q_2^{\epsilon(r_2)}$,  $|\Delta|\in \{q_1, q_2, 4q_1, 4q_2, q_1q_2, 4q_1q_2\}$, and either $d=\Delta\equiv 1\pmod 4$, or $d=\Delta/4\equiv 3\pmod 4$.

		 	\item   $\gcd(q_1^{\epsilon(r_1)-1}(q_1-1), q_2^{\epsilon(r_2)-1}(q_2-1))=2$,  $p$ is a primitive root of $q_1^{\epsilon(r_1)}$,  $\ord_{q_2^{\epsilon(r_2)}}p=q_2^{\epsilon(r_2)-1}(q_2-1)/2$, and
		 	either $p\equiv 3\pmod 4$ and $|\Delta|\in \{q_1, q_2, 4q_1, 4q_2, q_1q_2, 4q_1q_2\}$, or $p\equiv 1\pmod 4$ and $|\Delta|\in \{q_2, 4q_2\}$.
		 	
		 	\item $p\equiv 3\pmod 4$, $q_1\equiv 3\pmod 4$,  $\ord_{q_1^{r_1}}p=q_1^{r_1-1}(q_1-1)/2$, $\ord_{q_2^{r_2}}p=q_2^{r_2-1}(q_2-1)/2$,  $\gcd(q_1^{r_1-1}(q_1-1)/2, q_2^{r_2-1}(q_2-1)/2)=1$,  and
		 	$d=\Delta\in \{-q_1, -q_2, q_1q_2\}$.
		 \end{itemize}

%
%
%
%
%
%
%
		
		 \item $\exp(G)=8q^r$ and one of the following holds

		 \begin{itemize}
		 	\item  $p$ is a primitive root of $q^r$ and either  $p\equiv 1\pmod 8$,  $d=\pm 2$,  and $\Delta=4d$, or  $p\not\equiv 1\pmod 8$ and  $|\Delta|\in \{q, 4q, 8\}$.

		 	\item  $q\equiv 3\pmod 4$,  $\ord_{q^r}p=q^{r-1}(q-1)/2$, and either $p\equiv 5\pmod 8$ and $\Delta\in \{4q, -q\}$, or $p\not\equiv 1\pmod 4$   and	$\Delta\in \{-q, 4q, \pm 8\}$.
		 \end{itemize}

		 \item $\exp(G)=2^tq^r$ with $t\ge 4$ and $q\equiv 3\pmod 4$, $p\equiv \pm3,\pm5\pmod {16}$,  $|\Delta|\in \{8, q, 4q\}$, and  either $p$ is a primitive root of $q^r$, or  $\ord_{q^r}p=q^{r-1}(q-1)/2$.

	\end{enumerate}

\item $\mathcal O_{\mathfrak p}[G]$ is weekly clean but not clean  if and only if
one of the following holds.
\begin{enumerate}

	\item $\exp(G)=2^r$ with $r\ge 3$, $|\Delta|=8$, $d=\pm 2$, and $p\equiv 5\pmod {8}$,
	$G\cong C_{2^r}$, $x^2+1$ is irreducible over $\mathcal O_{\mathfrak p}$

%
		\item $\exp(G)=4q^r$, $|\Delta|=4q$, $d=\pm q\equiv 3\pmod 4$, $p\equiv 1\pmod 4$, and $p$ is a primitive root of $q^{\epsilon(r)}$, $G\cong C_{4}\oplus H$, where $H$ is a $q$-group, $x^2+1$ is irreducible over $\mathcal O_{\mathfrak p}$.

			\item $\exp(G)=q_1^{r_1}q_2$ or $2q_1^{r_1}q_2$, $|\Delta|=q_1q_2$, $d=\pm q_1q_2\equiv 1\pmod 4$, $p$ is a primitive root of $q_1^{\epsilon(r_1)}$, $\ord_{q_2}p=(q_2-1)/2$, and $\gcd(q_1^{\epsilon(r_1)-1}(q_1-1), (q_2-1))=2$, $G\cong C_{q_2}\oplus H$, where $H$ is a $q_1$-group, $\Phi_{q_2}(x)$ is irreducible over $\mathcal O_{\mathfrak p}$.

	\item $G\cong C_{\exp(G)}$, $\Phi_{\exp(G)}(x)$ is irreducible over $\mathcal O_{\mathfrak p}$, and  one of the following holds
	 \begin{itemize}
		\item $\exp(G)=q$ with $q\equiv 1\pmod 4$, $\Delta=d=q$, and $\ord_{q}p=(q-1)/4$.
		
		\item $\exp(G)=q_1q_2$, $\Delta=d=q_1q_2$, $q_1\equiv 1\pmod 4$, $q_2\equiv 1\pmod 4$, $p$ is a primitive root of both $q_1$ and $q_2$, and $\gcd(q_1-1, q_2-1)=4$.
		
		\item $\exp(G)=8$, $|\Delta|=8$, $d=\pm2$, and $p\equiv 7,15\pmod {16}$.
		
		\item $\exp(G)=q_1q_2$, $|\Delta|=q_1$, and $d=\pm q_1\equiv 1\pmod 4$, where $q_1,q_2$ are distinct odd primes such that $p$ is a primitive root of $q_2$, $\ord_{q_1}p=(q_1-1)/2$, and $\gcd((q_1-1)/2, q_2-1)=2$.
		
		\item $\exp(G)=4q$, $|\Delta|=q=d\equiv 1\pmod 4$,  and $p\equiv 3\pmod 4$, where $q$ is an odd prime such that $\ord_qp=(q-1)/2$.
		
	\end{itemize}
\end{enumerate}

\end{enumerate}

\item  Suppose  $\Delta\mid \exp(G)$ and $\big(\frac{\Delta}{p}\big)=-1$. Then

\begin{enumerate}
	\item $\mathcal O_{\mathfrak p}[G]$ is  clean  if and only if $\mathcal O_{\mathsf p}[\zeta_m]$ is clean for all divisors $m$ of $n$ if and only if $\exp(G)=q^r$ or $2q^r$, $|\Delta|=q$,   $d=\pm q\equiv 1\pmod 4$,  and either $p$ is a primitive root of $q^{\epsilon}$, or $q\equiv 3\pmod 4$ and $\ord_{q^{\epsilon(r)}}p=q^{\epsilon(r)-1}(q-1)/2$.

\item $\mathcal O_{\mathfrak p}[G]$ is feebly clean  if and only if $\mathcal O_{\mathsf p}[\zeta_{m}]$ is feebly clean for all divisors $m$ of $\exp(G)$ if and only if $\ord_{\exp(G)}p=\varphi(\exp(G))$ with $\exp(G)\ge 3$, or
	$\ord_{\exp(G)}p=\varphi(\exp(G))/2$ is odd, or one of the following holds
	\begin{enumerate}
		\item  $\exp(G)=8$, $|\Delta|=8$, and  $d=\pm2$.

		\item $\exp(G)=2^r$ with $r\ge4$, $|\Delta|=8$, $d=\pm2$, and $p\equiv \pm3,\pm5\pmod {16}$.

		\item $\exp(G)=q^r$ or $2q^r$, where $q$ is an odd prime with $q\equiv 1\pmod 4$, $\Delta=d=q$,  and  $\ord_{q^{\epsilon(r)}}p=q^{\epsilon(r)-1}(q-1)/2$.

		\item $\exp(G)=q^r$ or $2q^r$, where $q$ is an odd prime with $q\equiv 5\pmod 8$, $\Delta=d=q$,  and  $\ord_{q^{\epsilon(r)}}p=q^{\epsilon(r)-1}(q-1)/4$.

		\item $\exp(G)=4q^r$, where $q$ is an odd prime and $r\in \N$, $|\Delta|=q$, $d=\pm q\equiv 1\pmod 4$, and  either  $p$ is a primitive root of $q^{\epsilon(r)}$, or $p\equiv 3\pmod 4$, $q\equiv 3\pmod 4$, and $\ord_{q^{\epsilon(r)}}p=q^{\epsilon(r)-1}(q-1)/2$.

		 \item $\exp(G)=4q^r$, where $q$ is an odd prime and $r\in \N$, $|\Delta|=4q$, $d=\pm q\equiv 3\pmod 4$, and  either  $p$ is a primitive root of $q^{\epsilon(r)}$, or $p\equiv 3\pmod 4$, $q\equiv 3\pmod 4$, and $\ord_{q^{\epsilon(r)}}p=q^{\epsilon(r)-1}(q-1)/2$.

		\item $\exp(G)=4q^r$, where $q$ is an odd prime and $r\in \N$, $\Delta=d=-q\equiv 1\pmod 4$,   $p\equiv 1\pmod 4$,  and $\ord_{q^{\epsilon(r)}}p=q^{\epsilon(r)-1}(q-1)/2$.

		\item $\exp(G)=4q^r$, where $q$ is an odd prime and $r\in \N$, $\Delta=4q$, $d= q\equiv 3\pmod 4$,  $p\equiv 1\pmod 4$, $\ord_{q^{\epsilon(r)}}p=q^{\epsilon(r)-1}(q-1)/2$.

		\item $\exp(G)=q_1^{r_1}q_2^{r_2}$ or $2q_1^{r_1}q_2^{r_2}$, where $q_1,q_2$ are distinct odd primes with $q_2\equiv 3\pmod 4$ and $\gcd(q_1^{\epsilon(r_1)-1}(q_1-1), q_2^{\epsilon(r_2)-1}(q_2-1))=2$, $|\Delta|=|d|=\{q_1, q_2, q_1q_2\}$ with $d\equiv 1\pmod 4$,
		 and either $p$ is a primitive root of both $q_1^{\epsilon(r_1)},q_2^{\epsilon(r_2)}$, or $p$ is a primitive root of $q_1^{\epsilon(r_1)}$ and $\ord_{q_2^{\epsilon(r_2)}}p=q_2^{\epsilon(r_2)-1}(q_2-1)/2$.

%
%

		 \item $\exp(G)=q_1^{r_1}q_2^{r_2}$ or $2q_1^{r_1}q_2^{r_2}$, where $q_1,q_2$ are distinct  odd primes with $q_1\equiv 3\pmod 4$ and $q_2\equiv 3\pmod 4$,  and either $p$ is a primitive root of $q_1^{\epsilon(r_1)}$, $\ord_{q_2^{r_2}}p=q_2^{r_2-1}(q_2-1)/2$, and $\gcd(q_1^{r_1-1}(q_1-1), q_2^{r_2-1}(q_2-1)/2)=1$,  or $\ord_{q_1^{r_1}}p=q_1^{r_1-1}(q_1-1)/2$, $\ord_{q_2^{r_2}}p=q_2^{r_2-1}(q_2-1)/2$, and $\gcd(q_1^{r_1-1}(q_1-1)/2, q_2^{r_2-1}(q_2-1)/2)=1$, then $|\Delta|=|d|\in \{q_1,q_2,q_1q_2\}$ and $d\equiv 1\pmod 4$.
	\end{enumerate}
	
	\item $\mathcal O_{\mathfrak p}[G]$ is weekly clean but not clean  if and only if
  $G\cong C_q$, $\Phi_q(x)$ is irreducible over $\mathcal O_{\mathfrak p}$,
		$|\Delta|=q$,   $d= q\equiv 1\pmod 4$,  and either $\ord_qp=(q-1)/4$ is odd, or $\ord_qp=(q-1)/2$, where $q$ is an odd prime.
\end{enumerate}

 \end{enumerate}

\end{theorem}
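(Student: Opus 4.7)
The plan is to reduce, in each of the three main cases (1), (2), (3), the statement to an application of Proposition \ref{2.5}, and then to unfold the conditions $\deg(\phi_d(x)) = k\cdot \ord_d(N(\mathfrak{p}))$ for $k \in \{1,2\}$ into the explicit arithmetic conditions listed, using Propositions \ref{p2.6} and \ref{p3.2}. The two key ingredients that distinguish this situation from the cyclotomic case of Theorem \ref{main1} are: first, Lemma \ref{3.5} determines $N(\mathfrak{p})$ as either $p$ or $p^2$ according to the value of $\left(\frac{\Delta}{p}\right)$, and second, Lemma \ref{3.4} together with the tower law gives the degree of an irreducible factor $\phi_d(x)$ of $\Phi_d(x)$ over $\mathcal{O}_{\mathfrak{p}}$ as
\[
\deg(\phi_d(x)) = [\Q(\sqrt{d_0})(\zeta_d):\Q(\sqrt{d_0})] = \begin{cases} \varphi(d) & \text{if } \Delta \nmid d,\\ \varphi(d)/2 & \text{if } \Delta \mid d.\end{cases}
\]

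For part (1), where $\Delta \nmid \exp(G)$, every divisor $d$ of $\exp(G)$ satisfies $\Delta \nmid d$, so $\deg(\phi_d(x)) = \varphi(d)$ and $\Phi_d(x)$ is irreducible over $\mathcal{O}_{\mathfrak{p}}$. The condition from Proposition \ref{2.5} for cleanness, respectively for being feebly clean but not clean, becomes $\varphi(d) = \ord_d N(\mathfrak{p})$ for all $d \mid \exp(G)$, respectively $\varphi(\exp(G)) = 2 \ord_{\exp(G)} N(\mathfrak{p})$ together with $\varphi(d) \le 2\ord_d N(\mathfrak{p})$ for all $d$. Splitting on whether $N(\mathfrak{p}) = p$ or $p^2$ and using $\ord_d(p^2) = \ord_d(p)/\gcd(\ord_d(p),2)$ gives the three sub-cases (a),(b),(c) listed in 1(b). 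Since $\lambda(\exp(G)) = 1$ forces $G \cong C_{\exp(G)}$ for weakly cleanness but not cleanness (by Theorem \ref{t1} and Lemma \ref{lemm2}), part 1(c) reduces to identifying when all proper subrings $\mathcal{O}_{\mathfrak{p}}[\zeta_d]$ are clean and $\mathcal{O}_{\mathfrak{p}}[\zeta_{\exp(G)}]$ is feebly clean but not clean; invoking Proposition \ref{p3.2} in the split case $\left(\frac{\Delta}{p}\right) \in \{0,1\}$ produces cases (a)--(d) of Theorem \ref{thm1}(3), while in the inert case $\left(\frac{\Delta}{p}\right) = -1$ one checks directly that cleanness of $\mathcal{O}_{\mathfrak{p}}[\zeta_d]$ forces $d \in \{1,2\}$, isolating $\exp(G) = 4$.

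For parts (2) and (3), where $\Delta \mid \exp(G)$, the analysis branches according to whether $\Delta$ divides each individual divisor $d$. By the degree formula, those divisors $d$ with $\Delta \mid d$ contribute a factor of $2$ into the index $\varphi(d)/\deg(\phi_d(x))$, and one must compare this against the factor $\gcd(\ord_d(p),2)$ arising from $N(\mathfrak{p})$. The strategy will be to write, for each $d \mid \exp(G)$,
\[
\frac{\deg(\phi_d(x))}{\ord_d N(\mathfrak{p})} = \frac{\varphi(d)}{\ord_d p}\cdot \frac{\gcd(\ord_d p, 2)^{\,\epsilon_1}}{2^{\,\epsilon_2}},
\]
where $\epsilon_1 = 1$ iff $\left(\frac{\Delta}{p}\right) = -1$, and $\epsilon_2 = 1$ iff $\Delta \mid d$, and to recognize when this ratio equals $1$ (clean) or $2$ (feebly clean but not clean). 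The resulting conditions on $\varphi(d)/\ord_d p \in \{1,2,4\}$ and on the parities of $\ord_d p$ are exactly those catalogued in Propositions \ref{p2.6} and \ref{p3.2}, and each subcase there must be cross-referenced with the divisibility constraint $\Delta \mid d$ and the sign condition $d \equiv 1 \pmod 4$ (when $|\Delta| \ne 4|d|$) to extract the explicit lists in 2(a),(b),(c) and 3(a),(b),(c). The weakly clean but not clean case again forces $G \cong C_{\exp(G)}$ and $\Phi_{\exp(G)}(x)$ irreducible over $\mathcal{O}_{\mathfrak{p}}$ (so $\Delta \nmid \exp(G)$ or else $\lambda(\exp(G)) \ge 2$), with all smaller $\mathcal{O}_{\mathfrak{p}}[\zeta_d]$ clean. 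Exceptions where $\lambda(n) = 2$ but the two non-clean summands combine to yield a weakly clean ring (explaining the $C_4 \oplus H$ and $C_{q_2} \oplus H$ items in 2(c)) arise from Lemma \ref{lemm2}(2) and need to be handled separately.

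The main obstacle will be the combinatorial bookkeeping of case 2(b): there are seven different shapes of $\exp(G)$ from Proposition \ref{p2.6}(2)--(3), each refined by which divisors of $\exp(G)$ are multiples of $\Delta$ (controlled by $|\Delta| \in \{q_i, q_iq_j, 4q_i, 8, \ldots\}$) and by the sign condition $d \equiv 1 \pmod 4$ that distinguishes $\Delta = d$ from $\Delta = 4d$. The key lemma that makes this tractable is that for each candidate $\exp(G)$ the condition $\Delta \mid d$ for some proper $d$ is a pure divisibility constraint on $|\Delta|$, so one merely has to enumerate the divisors of $\exp(G)$ that are $\equiv 1 \pmod 4$ square-free and check the Legendre-symbol hypothesis $\left(\frac{\Delta}{p}\right) \in \{0,1\}$ or $-1$ is consistent with the constraints on $\ord_{q^{\epsilon(r)}} p$ already imposed. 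The remaining verifications reduce to the arithmetic already done in the proofs of Propositions \ref{p2.6} and \ref{p3.2}, so the proof consists essentially of a careful case analysis rather than any new conceptual ingredient.
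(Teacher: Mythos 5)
Your overall reduction is the same as the paper's: Lemma \ref{3.4} and the tower law give $\deg(\phi_m(x))=\varphi(m)$ or $\varphi(m)/2$ according to whether $\Delta\mid m$, Lemma \ref{3.5} gives $N(\mathfrak p)\in\{p,p^2\}$, Proposition \ref{2.5} converts cleanness and feeble cleanness into the condition $\deg(\phi_m(x))/\ord_m N(\mathfrak p)\in\{1,2\}$, and Propositions \ref{p2.6} and \ref{p3.2} unfold the arithmetic. Your displayed formula for that ratio is correct, and parts (1), (2a), (2b), (3a), (3b) would go through essentially as in the paper along the lines you sketch.

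The gap is in your treatment of the weakly clean cases of parts (2) and (3). You assert that weak cleanness without cleanness ``again forces $G\cong C_{\exp(G)}$ and $\Phi_{\exp(G)}(x)$ irreducible,'' and you explain the non-cyclic items of 2(c) ($C_4\oplus H$ and $C_{q_2}\oplus H$) as exceptions where ``$\lambda(n)=2$ but the two non-clean summands combine to yield a weakly clean ring.'' That mechanism does not exist: Lemma \ref{lemm2}.2 says a product is weakly clean only if at most one factor fails to be clean, so two non-clean summands can never combine into a weakly clean ring. The organizing principle the paper actually uses in 2(c) and 3(c) is that Lemma \ref{lemm2}.2 and Theorem \ref{t1} force the existence of \emph{exactly one} divisor $b$ of $\exp(G)$ with $\mathcal O_{\mathfrak p}[\zeta_b]$ not clean, and moreover $\lambda(b)=1$; one then splits on $\Delta\nmid b$ versus $\Delta\mid b$. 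In the first subcase $b$ must be a maximal divisor of $\exp(G)$ not divisible by $\Delta$ (otherwise a larger $b'$ with $\Delta\nmid b'$ and $\mathcal O_{\mathfrak p}[\zeta_{b'}]$ clean would force $\mathcal O_{\mathfrak p}[\zeta_{b}]$ clean), and since $b$ is then typically a \emph{proper} divisor of $\exp(G)$, only the Sylow subgroup of $G$ attached to $b$ and the factorization of $\Phi_b(x)$ are constrained --- this is precisely how the items $G\cong C_{2^r}$ with $x^2+1$ irreducible, $G\cong C_4\oplus H$, and $G\cong C_{q_2}\oplus H$ arise, and your framework (which constrains $G$ and $\Phi_{\exp(G)}$ at the top level only) cannot produce them. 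In the second subcase one shows $b=\exp(G)$ and invokes Proposition \ref{p3.2}.5 resp.\ the order computation for $p^2$. Without this ``unique bad divisor $b$'' analysis, the lists in 2(c)(i)--(iii) and the structure of 3(c) are out of reach, so the weakly clean portion of your proposal needs to be reworked.
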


\begin{proof}
Since $p\neq 2$ and $p\nmid \exp(G)$, we have $\{2,m\}\subset  U(\mathcal O_{\mathfrak p})$ for all divisors $m$ of $\exp(G)$.

\smallskip
\textbf{(1)} Suppose $\Delta\nmid \exp(G)$. By Lemma \ref{3.4} we have $\Q(\sqrt{d})\not\subset \Q(\zeta_{\exp(G)})$. Let $m$ be a divisor of $\exp(G)$.
Then $\deg(\phi_m(x))=[\Q(\sqrt{d})(\zeta_m):\Q(\sqrt{d})]=\varphi(m)$.

\smallskip
\textbf{(1.a)} Suppose $\mathcal O_{\mathsf p}[G]$ is clean. Then Lemma \ref{lemm1} and Theorem \ref{t1} imply that $\mathcal O_{\mathsf p}[\zeta_{\exp(G)}]$ is clean.

Suppose $\mathcal O_{\mathsf p}[\zeta_{\exp(G)}]$ is clean. Then by Proposition \ref{2.5}.1 $\deg(\phi_{\exp(G)}(x))=\ord_{\exp(G)}N(\mathsf p)$. If $\big(\frac{\Delta}{p}\big)=1$ or $0$, then by Lemma \ref{3.5} $N(\mathsf p)=p$ and hence $\varphi(\exp(G))=\ord_{\exp(G)}p$, i.e., (i) holds. If $\big(\frac{\Delta}{p}\big)=-1$, then by Lemma \ref{3.5} $N(\mathsf p)=p^2$ and hence $\varphi(\exp(G))=\ord_{\exp(G)}p^2=\frac{\ord_{\exp(G)}p}{\gcd(2, \ord_{\exp(G)}p)}\le \ord_{\exp(G)}p\le \varphi(\exp(G))$. Thus $\ord_{\exp(G)}p=\varphi(\exp(G))$ is odd, i.e., $\exp(G)= 2$ and hence (ii)  holds.

Suppose (i) or (ii) holds. If (i) holds, then $N(\mathsf p)=p$ and for every divisor $m$ of $\exp(G)$, we have $p$ is a primitive root of $m$, whence $\deg(\phi_m(x))=\varphi(m)=\ord_mp=\ord_mN(\mathsf p)$ for all divisors $m$ of $\exp(G)$. It follows from Proposition \ref{2.5}.1 that $\mathcal O_{\mathsf p}[G]$ is clean.
If (ii) holds, then $\exp(G)=2$. Since $\mathcal O_{\mathsf p}[\zeta_{\exp(G)}]=\mathcal O_{\mathsf p}$ is clean, by Theorem \ref{t1} we obtain  that  $\mathcal O_{\mathsf p}[G]$ is clean.

\smallskip
\textbf{(1.b)} Suppose $\mathcal O_{\mathsf p}[G]$ is feebly clean but not clean. Then by (1) $\mathcal O_{\mathsf p}[\zeta_{\exp(G)}]$ is not clean, 
and by Lemma \ref{lemm1} and Theorem \ref{t1},  $\mathcal O_{\mathsf p}[\zeta_{\exp(G)}]$ is feebly clean.

 Suppose $\mathcal O_{\mathsf p}[\zeta_{\exp(G)}]$ is feebly clean but not clean. It follows from Proposition \ref{2.5}.2 that $\deg(\phi_{\exp(G)}(x))=2\ord_{\exp(G)}N(\mathsf p)$. If $\big(\frac{\Delta}{p}\big)=1$ or $0$, then by Lemma \ref{3.5} $N(\mathsf p)=p$ and hence $\varphi(\exp(G))=2\ord_{\exp(G)}p$, i.e., (i) holds. Suppose $\big(\frac{\Delta}{p}\big)=-1$. Then by Lemma \ref{3.5} $N(\mathsf p)=p^2$ and hence $\varphi(\exp(G))=2\ord_{\exp(G)}p^2=2\frac{\ord_{\exp(G)}p}{\gcd(2, \ord_{\exp(G)}p)}$.
 If $\gcd(2, \ord_{\exp(G)}p)=1$, then $\varphi(\exp(G))/2=\ord_{\exp(G)}p$ is odd, i.e., (ii) holds.
 If $\gcd(2, \ord_{\exp(G)}p)=2$, then $\varphi(\exp(G))=\ord_{\exp(G)}p$ is even, i.e., (iii) holds.

 Suppose (i), or (ii), or (iii) holds. If (i) holds, then $N(\mathsf p)=p$ and for every divisor $m$ of $\exp(G)$, we have $\varphi(m)\ge 2\ord_mp$, whence $\deg(\phi_m(x))=\varphi(m)\ge 2\ord_mp=2\ord_mN(\mathsf p)$ for all divisors $m$ of $\exp(G)$. It follows from Proposition \ref{2.5} that $\mathcal O_{\mathsf p}[G]$ is feebly clean. By (1.a.i), we have $\mathcal O_{\mathsf p}[G]$ is not clean.
 If (ii) or (iii) holds, then $\exp(G)\neq 2$, $N(\mathsf p)=p^2$ and $\varphi(\exp(G))=2\frac{\ord_{\exp(G)}p}{\gcd(2, \ord_{\exp(G)}p)}=2\ord_{\exp(G)}p^2$.
 Thus for every divisor $m$ of $\exp(G)$, we have $\varphi(m)\ge 2\ord_mp^2$, whence $\deg(\phi_m(x))=\varphi(m)\ge 2\ord_mN(\mathsf p)$ for all divisors $m$ of $\exp(G)$. It follows from Proposition \ref{2.5} that $\mathcal O_{\mathsf p}[G]$ is feebly clean. By (1.a.ii) and $\exp(G)\neq 2$, we have $\mathcal O_{\mathsf p}[G]$ is not clean.

\smallskip
\textbf{(1.c)} Suppose $\mathcal O_{\mathsf p}[G]$ is weakly clean but not clean. Then by (1.a), we have $\mathcal O_{\mathsf p}[\zeta_{\exp(G)}]$ is not clean. Assume to the contrary that $G$ is not cyclic or $\Phi_{\exp(G)}(x)$ is not irreducible over $\mathcal O_{\mathsf p}$. Then $\lambda(\exp(G))\ge 2$. In view of Theorem \ref{t1} and Lemma \ref{lemm1}, we have  $\mathcal O_{\mathsf p}[G]$ is not weakly clean, a contradiction. Thus $G\cong C_{\exp(G)}$ and $\Phi_{\exp(G)}(x)$ is  irreducible over $\mathcal O_{\mathsf p}$. It  follows from Theorem \ref{t1} and Lemma \ref{lemm1} that $\mathcal O_{\mathsf p}[\zeta_m]$ is clean for all divisors $m$ of $\exp(G)$ with $m\neq \exp(G)$ and $\mathcal O_{\mathsf p}[\zeta_{\exp(G)}]$ is feebly clean but not clean.

 If $\big(\frac{\Delta}{p}\big)=-1$, then  by Lemma \ref{3.5} $N(\mathsf p)=p^2$ and
in view of Proposition \ref{2.5}, we obtain $\deg(\phi_{\exp(G)}(x))=\varphi(\exp(G))=2\ord_{\exp(G)}(p^2)$ and for all divisors $m$ of $\exp(G)$ with $m\neq \exp(G)$, $\deg(\phi_m(x))=\varphi(m)=\ord_mp^2$. Therefore $\ord_mp=\varphi(m)$ is odd, which implies that $m=1$ or $2$. Thus $\exp(G)=4$.

If  $\big(\frac{\Delta}{p}\big)\neq -1$, then  by Lemma \ref{3.5} $N(\mathsf p)=p$ and
in view of Proposition \ref{2.5}, we obtain $\deg(\phi_{\exp(G)}(x))=\varphi(\exp(G))=2\ord_{\exp(G)}p$ and for all divisors $m$ of $\exp(G)$ with $m\neq \exp(G)$, $\deg(\phi_m(x))=\varphi(m)=\ord_mp$. The assertion follows from Proposition \ref{p3.2}.2.

Suppose $G\cong C_{\exp(G)}$,   $\Phi_{\exp(G)}(x)$ is  irreducible over $\mathcal O_{\mathsf p}$, and either  $\exp(G)=4$ and $\big(\frac{\Delta}{p}\big)=-1$, or  $\big(\frac{\Delta}{p}\big)\neq -1$ and one of (i)-(iv) holds. Then $\lambda(\exp(G))=1$ and by Proposition \ref{p3.2}.2 we have $\deg(\phi_{\exp(G)}(x))=\varphi(\exp(G))=2\ord_{\exp(G)}(N(\mathsf p))$ and for all divisors $m$ of $\exp(G)$ with $m\neq \exp(G)$, $\deg(\phi_m(x))=\varphi(m)=\ord_mN(\mathsf p)$. It follows from Proposition \ref{2.5} that $\mathcal O_{\mathsf p}[\zeta_m]$ is clean for all divisors $m$ of $\exp(G)$ with $m\neq \exp(G)$ and $\mathcal O_{\mathsf p}[\zeta_{\exp(G)}]$ is feebly clean but not clean. In view of Theorem \ref{t1} and Lemma \ref{lemm1}, we have $\mathcal O_{\mathsf p}[G]$ is weakly clean but not clean.

\smallskip
\textbf{(2)}  Suppose $\Delta\mid \exp(G)$ and  $\big(\frac{\Delta}{p}\big)\neq -1$. Then $N(\mathsf p)=p$.
 By Lemma \ref{3.4} we have $\Q(\sqrt{d})\subset \Q(\zeta_{\exp(G)})$. Let $m$ be a divisor of $\exp(G)$ with $\Delta\nmid m$.
Then $\deg(\phi_m(x))=[\Q(\sqrt{d})(\zeta_m):\Q(\sqrt{d})]=\varphi(m)$. Let $m$ be a divisor of $\exp(G)$ with $\Delta\mid m$.
Then $\deg(\phi_m(x))=[\Q(\sqrt{d})(\zeta_m):\Q(\sqrt{d})]=\varphi(m)/2$.

\smallskip
 \textbf{(2.a)} It is clear that   $\mathcal O_{\mathfrak p}[G]$ is clean if and only if $\mathcal O_{\mathfrak p}[\zeta_{m}]$ is clean for all divisors $m$ of $\exp(G)$. By a careful check, it is easy to see that each  of  the cases (i)-(vii) implies that $\mathcal O_{\mathsf p}[\zeta_m]$ is clean for all divisors $m$ of $n$ and hence  $\mathcal O_{\mathfrak p}[G]$ is clean. Thus we only need to show that the cleanness of $\mathcal O_{\mathfrak p}[G]$  implies that one of the cases (i)-(vii) holds.

  Now suppose $\mathcal O_{\mathfrak p}[\zeta_{m}]$ is clean for all divisors $m$ of $\exp(G)$.
  Then
   $\varphi(|\Delta|)/2=\ord_{|\Delta|}p$ and $\varphi(m)=\ord_mp$ for divisor $m$ of $|\Delta|$ with $m\neq |\Delta|$. Note that $\Delta=d$ or $4d$. Thus by Proposition \ref{p3.2}.2  one of the following holds
 \begin{itemize}
 	\item  $|\Delta|=8$ and $p\equiv 3\pmod 4$.

 	\item $|\Delta|=q$ is an odd prime  and  $\ord_{q}p=(q-1)/2$.

 	\item $|\Delta|=4q$, where $q$ is an odd prime, $p\equiv 3\pmod 4$,  and    $p$ is a primitive root of $q$.

 	\item $|\Delta|=q_1q_2$ is a product of  two distinct primes, $p$ is a primitive root of unity of both $q_1$ and $q_2$, and $\gcd(q_1-1,q_2-1)=2$.
 \end{itemize}
Since $\mathcal O_{\mathfrak p}[\zeta_{\exp(G)}]$ is clean, we have $\ord_{\exp(G)}p=\varphi(\exp(G))/2$. Next we check all the cases of Proposition \ref{p2.6}.2.

Suppose $\exp(G)=2^r$ with $r\in \N$. Then $|d|=2$ and $|\Delta|=8$, whence $r\ge 3$. If $r=3$, this is the case (i). If $r\ge 4$, then  $p\equiv \pm3,\pm5\pmod {16}$, and this  together with $p\equiv 3\pmod 4$ gives $p\equiv 3, 11\pmod {16}$, which is the case (ii).

Suppose $\exp(G)=q^r$ or $2q^r$, where $r\ge 1$ and $q$ is an odd prime with $\ord_{q^{\epsilon(r)}}p=q^{\epsilon(r)-1}(q-1)/2$. Then $|\Delta|=q$ and $d=\pm q\equiv 1\pmod 4$. This is the case (iii).

Suppose $\exp(G)=4q^r$, where  $r\ge 1$ and $q$ is an odd prime such that $p$ is a primitive root of $q^{\epsilon(r)}$. Then $|\Delta|=4q$, $d=\pm q\equiv 3\pmod 4$, and $p\equiv 3\pmod 4$. This is the case (v).

Suppose $\exp(G)=4q^r$, where  $r\ge 1$ and $q$ is an odd prime with $p\equiv 3\pmod 4$, $q\equiv 3\pmod 4$, and  $\ord_{q^{\epsilon(r)}}=q^{\epsilon(r)-1}(q-1)/2$. Then $|\Delta|=q$ and $d=- q\equiv 1\pmod 4$. This is the case (iv).

Suppose $\exp(G)=q_1^{r_1}q_2^{r_2}$ or $2q_1^{r_1}q_2^{r_2}$, where $q_1,q_2$ are distinct odd primes,  $\gcd(q_1^{\epsilon(r_1)-1}(q_1-1), q_2^{\epsilon(r_2)-1}(q_2-1))=2$,  and  $p$ is a primitive root of both $q_1^{\epsilon(r_1)},q_2^{\epsilon(r_2)}$. Then $|\Delta|=q_1q_2$ and $d=\pm q_1q_2\equiv 1\pmod 4$. This is the case of (vii).

Suppose $\exp(G)=q_1^{r_1}q_2^{r_2}$ or $2q_1^{r_1}q_2^{r_2}$, where $q_1,q_2$ are distinct odd primes with $q_2\equiv 3\pmod 4$,  $\gcd(q_1^{\epsilon(r_1)-1}(q_1-1), q_2^{\epsilon(r_2)-1}(q_2-1))=2$,  $p$ is a primitive root of $q_1^{\epsilon(r_1)}$, and $\ord_{q_2^{\epsilon(r_2)}}p=q_2^{\epsilon(r_2)-1}(q_2-1)/2$.
Then $|\Delta|=q_2$ and $d=-q_2\equiv 1\pmod 4$. This is the case (vi).

\smallskip
\textbf{(2.b)}  It is clear that $\mathcal O_{\mathfrak p}[G]$ is feebly clean if and only if $\mathcal O_{\mathfrak p}[\zeta_{m}]$ is feebly clean for all divisors $m$ of $\exp(G)$.

Since $\mathcal O_{\mathfrak p}[\zeta_{\exp(G)}]$ being feebly clean implies that  $\varphi(\exp(G))/2=\ord_{\exp(G)}p$  or  $\varphi(\exp(G))/2=2\ord_{\exp(G)}p$, we distinguish two cases.

\smallskip
\noindent{\bf Case 1.} $\varphi(\exp(G))/2=\ord_{\exp(G)}p$.
 \smallskip

In this case, we only need to show that $\mathcal O_{\mathfrak p}[\zeta_{m}]$ is feebly clean for all divisors $m$ of $\exp(G)$ and this is clear by the fact that $\ord_mp\ge \varphi(m)/2$ for all divisors $m$ of $\exp(G)$.

\smallskip
\noindent{\bf Case 2.} $\varphi(\exp(G))/2=2\ord_{\exp(G)}p$.
\smallskip

If one of the cases (i)-(ix) holds, then a careful check implies that $\mathcal O_{\mathfrak p}[\zeta_{m}]$ is feebly clean for all divisors $m$ of $\exp(G)$ and hence $\mathcal O_{\mathfrak p}[G]$ is feebly clean. We only need to show that $\mathcal O_{\mathfrak p}[G]$ is feebly clean implies that one of the cases (i)-(ix) holds.

Now suppose $\mathcal O_{\mathfrak p}[\zeta_{m}]$ is feebly clean for all divisors $m$ of $\exp(G)$.
Let $m$ be a divisor of $\exp(G)$. If $\Delta\mid m$, then $\ord_{m}p\ge \varphi(m)/4$ and if $\Delta\nmid m$, then $\ord_mp\ge \varphi(m)/2$. In particular, if there exists a divisor $m$ of $\exp(G)$ such  that $\ord_mp=\varphi(m)/4$, then $\Delta\mid m$.  Note that $d$ is square-free.  We will use all those facts without further mention.


Since $\varphi(\exp(G))/2=2\ord_{\exp(G)}p$, we check all the cases of Proposition \ref{p2.6}.3 for $\exp(G)$.

If $\exp(G)=8$, then $|\Delta|=8$, $d=\pm 2$, and $p\equiv 1\pmod 8$. This is the case (i).

If $\exp(G)=2^r$ with $r\ge 4$, then $|\Delta|=8$ and  $d=\pm 2$. Since $\ord_{2^r}p=2^{r-3}$, we obtain $\ord_{16}p=2$ and $\ord_8p=1$. Together with Proposition \ref{p2.6}.2.(a) and (b), we have  $p\equiv 9\pmod {16}$. This is the case (ii).

If $\exp(G)=q^r$ or $2q^r$ with $q\equiv 1\pmod 4$ and  $\ord_{q^{\epsilon(r)}}p=q^{\epsilon(r)-1}(q-1)/4$, then   $\Delta=d=q$. This is the case (iii).

 If $\exp(G)=q_1^{r_1}q_2^{r_2}$ or $2q_1^{r_1}q_2^{r_2}$, Then  $|\Delta|$ divides $q_1q_2$. By comparing with Proposition \ref{p2.6}.3(d) and by symmetry we obtain that  one of the following holds, which is the case (iv).

\begin{itemize}
	\item $p$ is a primitive root of both $q_1^{\epsilon(r_1)}, q_2^{\epsilon(r_2)}$, $\gcd(q_1^{\epsilon(r_1)-1}(q_1-1), q_2^{\epsilon(r_2)-1}(q_2-1))=4$, and $|\Delta|\in \{q_1, q_2, q_1q_2\}$. Note that $q_1\equiv 1\pmod 4$ and $q_2\equiv 1\pmod 4$.  Thus  $\Delta=d\in \{q_1, q_2, q_1q_2\}$.
	
	\item $p$ is a primitive root of $q_1^{\epsilon(r_1)}$, $\ord_{q_2^{\epsilon(r_2)}}p=q_2^{\epsilon(r_2)-1}(q_2-1)/2$,  $\gcd(q_1^{\epsilon(r_1)-1}(q_1-1), q_2^{\epsilon(r_2)-1}(q_2-1)/2)=2$, $|\Delta|\in \{q_1, q_2, q_1q_2\}$, and $d=\Delta\equiv 1\pmod 4$.

	\item $\ord_{q_1^{\epsilon(r_1)}}p=q_1^{\epsilon(r_1)-1}(q_1-1)/2$, $\ord_{q_2^{\epsilon(r_2)}}p=q_2^{\epsilon(r_2)-1}(q_2-1)/2$, $\gcd(q_1^{\epsilon(r_1)-1}(q_1-1)/2, q_2^{\epsilon(r_2)-1}(q_2-1)/2)=1$,  $|\Delta|\in \{q_1, q_2, q_1q_2\}$, and  $d=\Delta\equiv 1\pmod 4$.

	\item $p$ is a primitive root of $q_1^{\epsilon(r_1)}$, $\ord_{q_2^{\epsilon(r_2)}}p=q_2^{\epsilon(r_2)-1}(q_2-1)/4$,  $\gcd(q_1^{\epsilon(r_1)-1}(q_1-1), q_2^{\epsilon(r_2)-1}(q_2-1)/4)=1$, and $|\Delta|=q_2$. Since $q_2\equiv 1\pmod 4$, we obtain $d=\Delta=q_2$.
\end{itemize}

If $\exp(G)=q_1^{r_1}q_2^{r_2}q_3^{r_3}$ or $2q_1^{r_1}q_2^{r_2}q_3^{r_3}$, then  $|\Delta|$ divides $q_1q_2q_3$. By comparing with Proposition \ref{p2.6}.3(e) and by symmetry  we obtain that  one of the following holds, which is the case (v).
\begin{itemize}
	\item $p$ is a primitive root of  $q_1^{r_1}, q_2^{r_2}, q_3^{r_3}$,  $\gcd(q_i^{r_i-1}(q_i-1), q_j^{r_j-1}(q_j-1))=2$  for all distinct $i,j\in \{1,2,3\}$, $|\Delta|\in \{q_1, q_2, q_3, q_1q_2, q_1q_3, q_2q_3, q_1q_2q_3\}$, and $d=\Delta\equiv 1\pmod 4$.
	
	\item $p$ is a primitive root of $q_1^{r_1}, q_2^{r_2}$, $\ord_{q_3^{r_3}}p=q_3^{r_3-1}(q_3-1)/2$ is odd,   $\gcd(q_i^{r_i-1}(q_i-1), q_j^{r_j-1}(q_j-1))=2$ for all distinct $i,j\in \{1,2,3\}$, $|\Delta|\in \{q_1, q_2, q_3, q_1q_2, q_1q_3, q_2q_3, q_1q_2q_3\}$, and $d=\Delta\equiv 1\pmod 4$.

	\item $p$ is a primitive root of $q_1^{r_1}$, $\ord_{q_2^{r_2}}p=q_2^{r_2-1}(q_2-1)/2$ is odd, $\ord_{q_3^{r_3}}p=q_3^{r_3-1}(q_3-1)/2$ is odd, and $\gcd(q_i^{r_i-1}(q_i-1), q_j^{r_j-1}(q_j-1))=2$ for all distinct $i,j\in \{1,2,3\}$. Note that $\ord_{q_2q_3}p=\varphi(q_2q_3)/4$. We obtain that $|\Delta|\mid q_2q_3$. Since $q_2\equiv 3\pmod 4$ and $q_3\equiv 3\pmod 4$, we obtain  $d=\Delta\in \{-q_2, -q_3, q_2q_3\}$.
\end{itemize}


 If $\exp(G)=4q^r$, then $|\Delta|=q$ or $4q$. By comparing with Proposition \ref{p2.6}.3(f) and by symmetry, we obtain that  one of the following holds,  which is the case (vi).

 \begin{itemize}
 	\item $p\equiv 1\pmod 4$, $\ord_{q^r}p=q^{r-1}(q-1)/2$, and either $d=\Delta=\pm q\equiv 1\pmod 4$, or $d=\Delta/4=\pm q\equiv 3\pmod 4$.
 	
 	\item $p\equiv 3\pmod 4$, $q\equiv 1\pmod 4$,   $\ord_{q^r}p=q^{r-1}(q-1)/2$, and  either $d=\Delta=q$, or $d=\Delta/4=-q$.
 	
 	\item $p\equiv 3\pmod 4$, $q\equiv 5\pmod 8$, and $\ord_{q^r}p=q^{r-1}(q-1)/4$. Then $|\Delta|=q$ and hence $d=\Delta=q$.
 \end{itemize}

 If $\exp(G)=4q_1^{r_1}q_2^{r_2}$, then  $|\Delta|$ divides $4q_1q_2$. By comparing with Proposition \ref{p2.6}.3(g) and by symmetry  we can assume $q_2\equiv 3\pmod 4$  and  we obtain that one of the following holds,  which is the case (vii).

 \begin{itemize}
 	
 	\item  $\gcd(q_1^{\epsilon(r_1)-1}(q_1-1), q_2^{\epsilon(r_2)-1}(q_2-1))=2$,  $p$ is a primitive root of both $q_1^{\epsilon(r_1)},q_2^{\epsilon(r_2)}$,  $|\Delta|\in \{q_1, q_2, 4q_1, 4q_2, q_1q_2, 4q_1q_2\}$, and either $d=\Delta\equiv 1\pmod 4$, or $d=\Delta/4\equiv 3\pmod 4$.

 	\item   $\gcd(q_1^{\epsilon(r_1)-1}(q_1-1), q_2^{\epsilon(r_2)-1}(q_2-1))=2$,  $p$ is a primitive root of $q_1^{\epsilon(r_1)}$, and $\ord_{q_2^{\epsilon(r_2)}}p=q_2^{\epsilon(r_2)-1}(q_2-1)/2$.  Note that if $p\equiv 1\pmod 4$, then  $\ord_{4q_2}p=(q_2-1)/2=\varphi(4q_2)/4$ and hence $|\Delta|\mid 4q_2$. Therefore
 	either $p\equiv 3\pmod 4$ and $|\Delta|\in \{q_1, q_2, 4q_1, 4q_2, q_1q_2, 4q_1q_2\}$, or $p\equiv 1\pmod 4$ and $|\Delta|\in \{q_2, 4q_2\}$.
 	
 	\item $p\equiv 3\pmod 4$, $q_1\equiv 3\pmod 4$,  $\ord_{q_1^{r_1}}p=q_1^{r_1-1}(q_1-1)/2$, $\ord_{q_2^{r_2}}p=q_2^{r_2-1}(q_2-1)/2$, and $\gcd(q_1^{r_1-1}(q_1-1)/2, q_2^{r_2-1}(q_2-1)/2)=1$.  Note that  $\ord_{q_1q_2}p=\varphi(q_1q_2)/4$ and hence $|\Delta|\mid q_1q_2$. Therefore
 	 $d=\Delta\in \{-q_1, -q_2, q_1q_2\}$.
 \end{itemize}

%
%

If $\exp(G)=8q^r$, then $|\Delta|=q$ or $4q$ or $8$. By comparing with Proposition \ref{p2.6}.3(h) and by symmetry we obtain that  one of the following holds, which is the case (viii).
\begin{itemize}
	\item  $p$ is a primitive root of $q^r$. If $\ord_8p=1$, i.e., $p\equiv 1\pmod 8$, then $|\Delta|\mid 8$ and hence $d=\pm 2$ and $\Delta=4d$. Otherwise $|\Delta|\in \{q, 4q, 8\}$ and either $d=\Delta\equiv 1\pmod 4$ or $d=\Delta/4\not\equiv 1\pmod 4$.

	\item $p\not\equiv 1\pmod 8$, $q\equiv 3\pmod 4$, and $\ord_{q^r}p=q^{r-1}(q-1)/2$.	If $p\equiv 5\pmod 8$, then $\ord_{4q}=(q-1)/2=\varphi(4q)/4$ and hence $|\Delta|\mid 4q$. Therefore either $p\equiv 5\pmod 8$ and $\Delta\in \{4q, -q\}$, or $p\not\equiv 1\pmod 4$ and	$\Delta\in \{-q, 4q, \pm 8\}$.
\end{itemize}


 If $\exp(G)=2^tq^r$ with $t\ge 4$, then $q\equiv 3\pmod 4$, $p\equiv \pm3,\pm5\pmod {16}$,  $|\Delta|\in \{8, q, 4q\}$, and  either $p$ is a primitive root of $q^r$, or  $\ord_{q^r}p=q^{r-1}(q-1)/2$.  This is the case (ix).


\smallskip
\textbf{(2.c)} It is easy to check that each case (i)-(v) implies that $\mathcal O_{\mathfrak p}[G]$ is weakly clean but not clean. Now we prove the converse and
suppose $\mathcal O_{\mathfrak p}[G]$ is weakly clean but not clean. It follows from Lemma \ref{lemm2} and Theorem \ref{t1} that there exists exactly one
 divisor $b$ of $\exp(G)$ such that $\mathcal O_{\mathfrak p}[\zeta_{b}]$ is not clean. Then $b\ge 3$. We distinguish two cases.

\smallskip
\noindent{\bf Case 1. }$\Delta\nmid b$.
\smallskip

If there exists a divisor $b'$ of $\exp(G)$ such that $\Delta\nmid b'$ and  $b\t b'$ with $b\neq b'$, then
$\mathcal O_{\mathfrak p}[\zeta_{b'}]$ is  clean and hence $\mathcal O_{\mathfrak p}[\zeta_{b}]$   clean, a contradiction. Thus
 $b$ is  a maximal divisor of $\exp(G)$ such that $\Delta\nmid b$. For every divisor $b_1$ of $b$ with $b_1\neq b$, we have $\mathcal O_{\mathfrak p}[\zeta_{b_1}]$ is clean and $\varphi(b_1)=\ord_{b_1}p$. By Proposition \ref{p3.2}.2, one of the following holds.

 \begin{itemize}
 	\item $b=4$ and $p\equiv 1\pmod 4$; or $b=8$ and $p\equiv 3\pmod 4$.

 	\item $b=q$ is an odd prime  and  $\ord_{q}p=(q-1)/2$.

 	\item $b=4q$, where $q$ is an odd prime, $p\equiv 3\pmod 4$, and    $p$ is a primitive root of $q$.

 	\item $b=q_1q_2$ is a product of  two distinct primes, $p$ is a primitive root of unity of both $q_1$ and $q_2$, and $\gcd(q_1-1,q_2-1)=2$.
 \end{itemize}

 Since $\mathcal O_{\mathfrak p}[\zeta_{\exp(G)}]$ is clean, we have $\ord_{\exp(G)}p=\varphi(\exp(G))/2$. Next we check all the cases of  Proposition \ref{p2.6}.2.

 Suppose $\exp(G)=2^r$ with $r\ge 2$. Then $|\Delta|=8$. Since $b$ is a maximal divisor such that $\Delta\nmid b$, we have  $b=4$ and hence $p\equiv 1\pmod 4$. Therefore  $r\ge 3$ and   $d=\pm2$.  If $G\not\cong C_{2^r}$ or $\Phi_4(x)=x^2+1$ is not irreducible over $\mathcal O_{\mathfrak p}$, then $\lambda(4)\ge 2$ and hence $\mathcal O_{\mathfrak p}[G]$ is not weakly clean, a contradiction. Thus $G\cong C_{2^r}$ and $\Phi_4(x)=x^2+1$ is  irreducible over $\mathcal O_{\mathfrak p}$. Since $\mathcal O_{\mathfrak p}[\zeta_8]$ is clean, we have $\varphi(8)/2=\ord_8p$ and hence $p\not\equiv 1\pmod 8$. Note that $p\equiv 1\pmod 4$. We have $p\equiv 5\pmod 8$. This is the case (i).

 Suppose $\exp(G)=q^r$ or $2q^r$ with $q$ is an odd prime and $\ord_{q^{\epsilon(r)}}p=q^{\epsilon(r)-1}(q-1)/2$. Then $b=q$ and   $q^2 \mid |\Delta|$. But this is impossible, since $d$ is square-free.

%

 Suppose $\exp(G)=4q^r$, where $q$ is an odd prime. Then $b=4$ or $q$ or $4q$. If $b=4q$, then $q^2\mid |\Delta|$. But this is impossible, since $d$ is square-free. If $b=q$, since $d$ is square-free, we obtain $|\Delta|=4q$. But $2q$ is a divisor of $\exp(G)$  such that $\Delta\nmid 2q$, a contradiction.
 Thus $b=4$, which implies  $|\Delta|=q$ or $4q$ and $p\equiv 1\pmod 4$. If $|\Delta|=q$,  since  $\mathcal O_{\mathfrak p}[\zeta_{4q}]$ and $\mathcal O_{\mathfrak p}[\zeta_{q}]$ are clean, we obtain $\varphi(4q)/2=\ord_{4q}p=\ord_qp$ and $\varphi(q)/2=\ord_qp$, a contradiction. Thus $|\Delta|=4q$ and $d=\pm q\equiv 1\pmod 4$. Since  $\mathcal O_{\mathfrak p}[\zeta_{q^r}]$ is clean, we obtain  $\varphi(q^r)=\ord_{q^r}p$, whence  $p$ is a primitive root of $q^{\epsilon(r)}$. If the Sylow-$2$ subgroup of $G$ is not isomorphic to $C_4$ or $\Phi_4(x)=x^2+1$ is not irreducible over $\mathcal O_{\mathfrak p}$, then $\lambda(4)\ge 4$ and hence  $\mathcal O_{\mathfrak p}[G]$ is not weakly clean, a contradiction. Thus $G\cong C_{4}\oplus H$, where $H$ is a $q$-group, and $\Phi_{4}(x)=x^2+1$ is  irreducible over $\mathcal O_{\mathfrak p}$. This is the case (ii).

 Suppose $\exp(G)=q_1^{r_1}q_2^{r_2}$ or $2q_1^{r_1}q_2^{r_2}$. Then by symmetry we may assume $b=q_2$ or $q_1q_2$. If $b=q_1q_2$, then $|\Delta|$ is not square-free, a contradiction. Thus $b=q_2$, $\ord_{q_2}p=(q_2-1)/2$, and $|\Delta|=q_1$ or $q_1q_2$.
 Since $\mathcal O_{\mathfrak p}[\zeta_{q_1q_2}]$ is clean, we have $\varphi(q_1q_2)/2=\ord_{q_1q_2}p=\lcm\{(q_2-1)/2, \ord_{q_1}p\}$, which implies that $\ord_{q_1}p=\varphi(q_1)$ and hence $|\Delta|\nmid q_1$. Therefore $|\Delta|=q_1q_2$ and $d=\pm q_1q_2\equiv 1\pmod 4$. In view of Proposition \ref{p2.6}.2.(e), we obtain that $p$ is a primitive root of $q_1^{\epsilon(r_1)}$ and $\gcd(q_1^{\epsilon(r_1)-1}(q_1-1), (q_2-1))=2$. If the Sylow-$q_2$ subgroup of $G$ is not isomorphic to $C_{q_2}$ or $\Phi_{q_2}(x)$ is not irreducible over $\mathcal O_{\mathfrak p}$, then $\lambda(q_2)\ge 2$ and hence $\mathcal O_{\mathfrak p}[G]$ is not weakly clean, a contradiction. Hence $G\cong C_{q_2}\oplus H$, where $H$ is a $p_1$-group, and $\Phi_{q_2}(x)$ is  irreducible over $\mathcal O_{\mathfrak p}$. This is the case (iii).

\smallskip
\noindent{\bf Case 2. }$\Delta\mid b$.
\smallskip

Since $\mathcal O_{\mathfrak p}[\zeta_{b}]$ is not clean implies that
 $\mathcal O_{\mathfrak p}[\zeta_{\exp(G)}]$ is  not clean, it follows from Lemma \ref{lemm2} and Theorem \ref{t1} that  $b=\exp(G)$.
 Therefore $\varphi(\exp(G))/2=2\ord_{\exp(G)}p$. It is easy to see that $G\cong C_{\exp(G}$ and $\Phi_{\exp(G)}(x)$ is irreducible over $\mathcal O_{\mathfrak p}$.

Suppose $|\Delta|=\exp(G)$. Then $\varphi(|\Delta|)/2=2\ord_{|\Delta|}p$ and
 for all divisors $m$ of $\Delta$ with $m\neq |\Delta|$, we have $\mathcal O_{\mathfrak p}[\zeta_m]$ is clean and hence $\ord_mp=\varphi(m)$. In view of Proposition \ref{p3.2}.5, one of the following holds.

 	\begin{itemize}
 	
 	\item $\exp(G)=|\Delta|=d=q\equiv 1\pmod 4$, where $q$ is an odd prime with $\ord_{q}p=(q-1)/4$. This is the first case of (iv).
 	
 	\item $\exp(G)=|\Delta|=q_1q_2$, $q_1\equiv 1\pmod 4$, $q_2\equiv 1\pmod 4$, and $\Delta=d=q_1q_2$,  where $q_1,q_2$ are distinct  odd primes,  and  $p$ is a primitive root of both $q_1, q_2$  and $\gcd(q_1-1, q_2-1)=4$. This is the second case of (iv).
 	
 \end{itemize}

Suppose $|\Delta|\neq \exp(G)$. Then $\varphi(|\Delta|)/2=\ord_{|\Delta|}p$ and
for all divisors $m$ of $\Delta$ with $m\neq |\Delta|$, we have $\mathcal O_{\mathfrak p}[\zeta_m]$ is clean and hence $\ord_mp=\varphi(m)$.
By Proposition \ref{p3.2}.2, one of the following holds.

\begin{itemize}
	\item $|\Delta|=8$ and $p\equiv 3\pmod 4$.

	\item $|\Delta|=q$ is an odd prime  and  $\ord_{q}p=(q-1)/2$.

	\item $|\Delta|=4q$, where $q$ is an odd prime,  $p\equiv 3\pmod 4$, and    $p$ is a primitive root of $q$.

	\item $|\Delta|=q_1q_2$ is a product of  two distinct primes, $p$ is a primitive root of unity of both $q_1$ and $q_2$, and $\gcd(q_1-1,q_2-1)=2$.
\end{itemize}

 Let $m$ be a divisor of $\exp(G)$ with $m\neq \exp(G)$. If $\Delta\nmid m$, then $\ord_mp=\varphi(m)$, i.e., $p$ is a primitive root of $m$.  If $\Delta\t m$, then $\ord_mp=\varphi(m)/2$. We will use those  facts without further mention. Note that $\varphi(\exp(G))=4\ord_{\exp(G)}p$.

If $|\Delta|=8$, then by Proposition \ref{p2.6}.3 we obtain either $\exp(G)=16$ with $p\equiv 7, 15\pmod {16}$, or $\exp(G)=8q^r$ with $p\equiv 3\pmod 4$, and $p$ is a primitive root of $q^r$. If $\exp(G)=8q^r$, since $8\nmid 4q^r$ and $4q^r$ has no primitive root, we get a contradiction.
 Thus $\exp(G)=16$ and $d=\pm 2$. This is the third case of (iv).

If $|\Delta|=4q$, where  $q$ is an odd prime  such that $p$ is a primitive root of $q$, then by Proposition \ref{p2.6}.3 and $8, 4q'$ have no primitive root, where $q'$ is an odd prime, we obtain that this is impossible.

If $|\Delta|=q_1q_2$ is a product of  two distinct primes, $p$ is a primitive root of unity of both $q_1$ and $q_2$, and $\gcd(q_1-1,q_2-1)=2$,  then by Proposition \ref{p2.6}.3 and $q_1'q_2', 4q_1'$ have no primitive root, where $q_1', q_2'$ are two distinct odd primes, we obtain that this is impossible.

If $|\Delta|=q$ is an odd prime with $\ord_qp=(q-1)/2$, then by Proposition \ref{p2.6}.3 and $8, q_1'q_2', 4q_1'$ have no primitive root, where $q_1', q_2'$ are two distinct odd primes, we obtain that one of the following holds.

\begin{itemize}
	\item $\exp(G)=qq_2$, where $q_2$ is a prime such that $p$ is a primitive root of $q_2$ and $\gcd(q_2-1, (q-1)/2)=2$. This is the fourth case of (iv).
	
	\item $\exp(G)=4q$ with $p\equiv 3\pmod 4$ and  $q\equiv 1\pmod 4$. This is the fifth case of (iv).
	
\end{itemize}

\smallskip
\textbf{(3)}
  Suppose $\Delta\mid \exp(G)$ and  $\big(\frac{\Delta}{p}\big)= -1$. Then $N(\mathsf p)=p^2$.
By Lemma \ref{3.4} we have $\Q(\sqrt{d})\subset \Q(\zeta_{\exp(G)})$. Let $m$ be a divisor of $\exp(G)$ with $\Delta\nmid m$.
Then $\deg(\phi_m(x))=[\Q(\sqrt{d})(\zeta_m):\Q(\sqrt{d})]=\varphi(m)$. Let $m$ be a divisor of $\exp(G)$ with $\Delta\mid m$.
Then $\deg(\phi_m(x))=[\Q(\sqrt{d})(\zeta_m):\Q(\sqrt{d})]=\varphi(m)/2$.

If $4$ divides $\exp(G)$, then $\Delta\nmid 4$ and $\deg(\phi_4(x))=\varphi(4)=2\ord_4p^2$ and hence $\mathcal O_{\mathfrak p}[\zeta_4]$ is not clean.
Let $m$ be a divisor of $\exp(G)$ such that $\Delta\nmid m$ and $\mathcal O_{\mathfrak p}[\zeta_{m}]$ is clean. Then $\deg(\phi_m(x))=\varphi(m)=\ord_mp^2$, which implies that $\ord_mp=\varphi(m)$ is odd.
Thus $m=1$ or $2$. We will use those facts without further mention.

\smallskip
\textbf{(3.a)} It is clear that
 $\mathcal O_{\mathfrak p}[G]$ is clean if and only if  for every divisor $m$ of $\exp(G)$, we have $\mathcal O_{\mathfrak p}[\zeta_{m}]$ is clean.

 Suppose  $\mathcal O_{\mathfrak p}[G]$ is clean. Let $m$ be a divisor of $\exp(G)$.
 If $\Delta\nmid m$, then $m=1$ or $2$. Therefore $|\Delta|=q$ is a prime and $d=\pm q\equiv 1\pmod 4$. Suppose $\exp(G)=q^rm'$ with $\gcd(q,m')=1$. Then $m'=1$ or $2$. Therefore $\exp(G)=q^r$ or $2q^r$. Since  $\mathcal O_{\mathfrak p}[\zeta_{q^r}]$ is clean, we have $\varphi(q^r)/2=\ord_{q^r}p^2$, whence  either $p$ is a primitive root of $q^{\epsilon(r)}$, or $q\equiv 3\pmod 4$ and $\ord_{q^{\epsilon(r)}}p=q^{\epsilon(r)-1}(q-1)/2$.

The converse  follows directly from a careful check.

\smallskip
\textbf{(3.b)} It is clear that
$\mathcal O_{\mathfrak p}[G]$ is feebly clean if and only if  for every divisor $m$ of $\exp(G)$, we have $\mathcal O_{\mathfrak p}[\zeta_{m}]$ is feebly clean.

Suppose $\mathcal O_{\mathfrak p}[G]$ is feebly clean. Then $\mathcal O_{\mathfrak p}[\zeta_{\exp(G)}]$ is feebly clean, which implies that $\ord_{\exp(G)}p^2=\varphi(\exp(G))/2$ or $\varphi(\exp(G))/4$. We distinguish two cases.

\smallskip
\noindent{\bf Case 1. } $\ord_{\exp(G)}p^2=\varphi(\exp(G))/2$, i.e. $\ord_{\exp(G)}p=\varphi(\exp(G))$ with $\exp(G)\ge 3$, or
$\ord_{\exp(G)}p=\varphi(\exp(G))/2$ is odd.
\smallskip

In this case we only need to show that $\mathcal O_{\mathfrak p}[G]$ is feebly clean and this follows from the fact that $\ord_{m}p^2\ge \varphi(m)/2$ for all divisors $m$ of $\exp(G)$.

\smallskip
\noindent{\bf Case 2. } $\ord_{\exp(G)}p^2=\varphi(\exp(G))/4$, i.e. $\ord_{\exp(G)}p=\varphi(\exp(G))/2$ is even, or
$\ord_{\exp(G)}p=\varphi(\exp(G))/4$ is odd.
\smallskip

If one of the cases (i)-(x) holds, then a careful check implies that $\mathcal O_{\mathfrak p}[\zeta_{m}]$ is feebly clean for all divisors $m$ of $\exp(G)$ and hence $\mathcal O_{\mathfrak p}[G]$ is feebly clean. We only need to show that $\mathcal O_{\mathfrak p}[G]$  feebly clean implies that one of the cases of (i)-(x) holds. Now we suppose $\mathcal O_{\mathfrak p}[G]$ is feebly clean and we distinguish two cases.

\smallskip
\noindent{\bf Subcase 2.1. } $\ord_{\exp(G)}p=\varphi(\exp(G))/2$ is even.
\smallskip

 We now check all the cases of Proposition \ref{p2.6}.2.

If $\exp(G)=4$, then $\varphi(4)/2$ is not even. If $\exp(G)=8$, then $|\Delta|=8$, $p\not\equiv 1\pmod 8$, and $d=\pm 2$. This is the case (i) with $p\not\equiv 1\pmod 8$.

If $\exp(G)=2^r$ with $r\ge 4$ and $p\equiv \pm3,\pm5\pmod {16}$, then  $|\Delta|=8$ and $d=\pm 2$. This is the case (ii).

If $\exp(G)=q^r$ or $2q^r$ with $\ord_{q^{\epsilon}}p=q^{\epsilon(r)-1}(q-1)/2$ is even, then $q\equiv 1\pmod 4$ and  $\Delta=q=d$. This is the case (iii).

If $\exp(G)=4q^r$ with either $p$ is a primitive root of $q^{\epsilon(r)}$, or $p\equiv 3\pmod 4$, $q\equiv 3\pmod 4$, and $\ord_{q^{\epsilon(r)}}p=q^{\epsilon(r)-1}(q-1)/2$, then $|\Delta|=q$ or $4q$. If $|\Delta|=q$, then $d=\pm q\equiv 1\pmod 4$ and this is 3.b.(v). If $|\Delta|=4q$, then $d=\pm q\equiv 3\pmod 4$ and this is the case (vi).

  If  $\exp(G)=q_1^{r_1}q_2^{r_2}$ or $2q_1^{r_1}q_2^{r_2}$, where $q_1,q_2$ are distinct odd primes with $q_2\equiv 3\pmod 4$ and $\gcd(q_1^{\epsilon(r_1)-1}(q_1-1), q_2^{\epsilon(r_2)-1}(q_2-1))=2$,  and either $p$ is a primitive root of both $q_1^{\epsilon(r_1)},q_2^{\epsilon(r_2)}$, or $p$ is a primitive root of $q_1^{\epsilon(r_1)}$ and $\ord_{q_2^{\epsilon(r_2)}}p=q_2^{\epsilon(r_2)-1}(q_2-1)/2$, then $|\Delta|=|d|\in \{q_1, q_2, q_1q_2\}$ and $d\equiv 1\pmod 4$. This is the case (ix).

\smallskip
\noindent{\bf Subcase 2.2. } $\ord_{\exp(G)}p=\varphi(\exp(G))/4$ is odd.
\smallskip

We check all the cases of Proposition \ref{p3.2}.3.

 If $\exp(G)=8$ and $p\equiv 1\pmod 8$, then $|\Delta|=8$ and $d=\pm 2$. This is the case (i) with $p\equiv 1\pmod 8$.

If $\exp(G)=q^r$ or $2q^r$ with $\ord_{q^{\epsilon}}p=q^{\epsilon(r)-1}(q-1)/4$ and $q\equiv 5\pmod 8$, then $\Delta=q=d$. This is the case (iv).

If $\exp(G)=q_1^{r_1}q_2^{r_2}$ or $2q_1^{r_1}q_2^{r_2}$, where $q_1,q_2$ are distinct  odd primes with $q_1\equiv 3\pmod 4$ and $q_2\equiv 3\pmod 4$,  and either $p$ is a primitive root of $q_1^{\epsilon(r_1)}$, $\ord_{q_2^{r_2}}p=q_2^{r_2-1}(q_2-1)/2$, and $\gcd(q_1^{r_1-1}(q_1-1), q_2^{r_2-1}(q_2-1)/2)=1$,  or $\ord_{q_1^{r_1}}p=q_1^{r_1-1}(q_1-1)/2$, $\ord_{q_2^{r_2}}p=q_2^{r_2-1}(q_2-1)/2$, and $\gcd(q_1^{r_1-1}(q_1-1)/2, q_2^{r_2-1}(q_2-1)/2)=1$, then $|\Delta|=|d|\in \{q_1,q_2,q_1q_2\}$ and $d\equiv 1\pmod 4$. This is the case (x).

If  $\exp(G)=4q^r$, where $q$ is a prime with $q\equiv 3\pmod 4$,
$p\equiv 1\pmod 4$, and $\ord_{q^r}p=q^{r-1}(q-1)/2$, then $|\Delta|=q$ or $4q$. If $|\Delta|=q$, then $d=-q$ and this is 3.b.(vii). If $|\Delta|=4q$, then $d=q$ and this is the case (viii).

%

\smallskip
\textbf{(3.c)} It is easy to check that  if
$G\cong C_q$, $\Phi_q(x)$ is irreducible over $\mathcal O_{\mathfrak p}$,
$|\Delta|=q$,   $d= q\equiv 1\pmod 4$,  and either $\ord_qp=(q-1)/4$ is odd, or $\ord_qp=(q-1)/2$, where $q$ is an odd prime, then
 $\mathcal O_{\mathfrak p}[G]$ is weakly clean but not clean. Now we prove the converse and
suppose $\mathcal O_{\mathfrak p}[G]$ is weakly clean but not clean.
Let $b$ be the only divisor of $\exp(G)$ such that $\mathcal O_{\mathfrak p}[\zeta_{m}]$ is not clean. Then $b\ge 3$. We distinguish two cases.

\smallskip
\noindent{\bf Case 1. } $\Delta\nmid b$.
\smallskip

If there exists a divisor $b'$ of $\exp(G)$ such that $\Delta\nmid b'$ and  $b\t b'$ with $b\neq b'$, then
$\mathcal O_{\mathfrak p}[\zeta_{b'}]$ is  clean and hence $\mathcal O_{\mathfrak p}[\zeta_{b}]$ is  clean, a contradiction. Thus
$b$ is  a maximal divisor of $\exp(G)$ such that $\Delta\nmid b$.

 For every divisor $b_1$ of $b$ with $b_1\neq b$, we have $\mathcal O_{\mathfrak p}[\zeta_{b_1}]$ is clean. Note that $\Delta\nmid b_1$. We have $b_1=1$ or $2$.
 Therefore $b=4$ or $b$ is an odd prime.

   Note that $\mathcal O_{\mathfrak p}[\zeta_{\exp(G)}]$ is  clean and hence $\ord_{\exp(G)}p^2=\varphi(\exp(G))/2$, which  implies that $\ord_{\exp(G)}p=\varphi(\exp(G))/2$ is odd or $\ord_{\exp(G)}p=\varphi(\exp(G))$ with $\exp(G)\ge 3$. Since $b\neq \exp(G)$, it follows from Proposition \ref{p3.2}.1 and Proposition \ref{p2.6}.1  that $b$ is an odd prime and $\exp(G)=b^r$ or $2b^r$ for some $r\in \N$. The maximality of $b$ implies that $b^2\mid \Delta$, a contradiction to the fact that $d$ is square-free.

%

%

 \smallskip
 \noindent{\bf Case 2. } $\Delta\mid b$.
 \smallskip

 Since $\mathcal O_{\mathfrak p}[\zeta_{b}]$  not clean implies that
 $\mathcal O_{\mathfrak p}[\zeta_{\exp(G)}]$ is  not clean, it follows from Lemma \ref{lemm2} and Theorem \ref{t1} that  $b=\exp(G)$.

For all divisors $m$ of $|\Delta|$ with $m\neq |\Delta|$, we have $\Delta\nmid m$ and $\mathcal O_{\mathfrak p}[\zeta_{m}]$ is clean, which implies $m=1$ or $2$.  Thus $|\Delta|=q$ is a prime and $\exp(G)=q^rm'$, where $m'\in \N$ with $\gcd(q, m')=1$. Then $m'=1$ or $2$, whence $\exp(G)=q^r$ or $2q^r$ for some $r\in \N$.

 If $\exp(G)=2q^r$, then $\mathcal O_{\mathfrak p}[\zeta_{q^r}]$ is clean and hence $\mathcal O_{\mathfrak p}[\zeta_{\exp(G)}]$ is  clean, a contradiction. Thus $\exp(G)=q^r$. If $r\ge 2$, then $\mathcal O_{\mathfrak p}[\zeta_{q}]$ is clean and hence $\ord_qp^2=\varphi(q)/2$.
 Therefore $\ord_{q^2}p^2=\varphi(q)/2$ or $q\varphi(q)/2$. If $\ord_{q^2}p^2=q\varphi(q)/2$, then $\ord_{q^r}p^2=q^{r-1}\varphi(q)/2$ and hence $\mathcal O_{\mathfrak p}[\zeta_{\exp(G)}]$ is  clean, a contradiction. If $\ord_{q^2}p^2=\varphi(q)/2$, then $\ord_{q^2}p^2=\varphi(q^2)/2q<\varphi(q^2)/4$, which implies that $\mathcal O_{\mathfrak p}[\zeta_{p^2}]$ is not feebly clean, a contradiction. Therefore $r=1$ and $\ord_qp^2=\varphi(q)/4$, which implies that either $\ord_qp=(q-1)/4$ is odd, or $\ord_qp=(q-1)/2$ is even. Both imply $q\equiv 1\pmod 4$ and hence $d=q=\Delta$.

 If  $G\not\cong C_{q}$ or $\Phi_q(x)$ is not irreducible over $\mathcal O_{\mathfrak p}$, then $\lambda(q)\ge 2$ and hence $\mathcal O_{\mathfrak p}[G]$ is not weakly clean, a contradiction. Thus $G\cong C_{q}$ and  $\Phi_q(x)$ is  irreducible over $\mathcal O_{\mathfrak p}$.
\end{proof}

%
%
%

%
%
%


\providecommand{\bysame}{\leavevmode\hbox to3em{\hrulefill}\thinspace}
\providecommand{\MR}{\relax\ifhmode\unskip\space\fi MR }
\providecommand{\MRhref}[2]{%
  \href{http://www.ams.org/mathscinet-getitem?mr=#1}{#2}
}
\providecommand{\href}[2]{#2}

\end{document}